\documentclass[11pt]{amsart}

\usepackage[a4paper,margin=1.12in]{geometry}
\usepackage{amsmath,amssymb,mathtools}
\usepackage{microtype}
\usepackage[hidelinks]{hyperref}

\usepackage{color}

\newtheorem{theorem}{Theorem}[section]
\newtheorem{proposition}[theorem]{Proposition}
\newtheorem{lemma}[theorem]{Lemma}
\newtheorem{corollary}[theorem]{Corollary}
\theoremstyle{remark}
\newtheorem{remark}[theorem]{Remark}

\newcommand{\M}{\mathcal M}
\newcommand{\N}{\mathcal N}
\newcommand{\A}{\mathcal A}
\newcommand{\K}{\mathcal K}
\newcommand{\Tr}{\operatorname{Tr}}
 
\newcommand{\one}{\mathbf 1}
\newcommand{\C}{\mathbb C}

\title[Three-point trace positivity and interpolating metrics]
{Three-Point Trace Positivity:
	Schatten Norm Compression and Interpolating Metrics}

\author{Trung Dung Vuong}

\address{
	High School for the Gifted, VNUHCM,
	153 Nguyen Chi Thanh, An Dong Ward,
	Ho Chi Minh City, Vietnam
}

\address{
	Vietnam National University Ho Chi Minh City,
	Linh Xuan Ward,
	Ho Chi Minh City, Vietnam
}

\email{vtdung@ptnk.edu.vn}

\author{Hiroyuki Osaka}

\address{
Department of Mathematics,
Ritsumeikan University,
Kusatsu, Shiga 525-8577, Japan
}

\email{osaka@se.ritsumei.ac.jp}

\subjclass[2020]{Primary 46L51, 46L52; Secondary 15A45, 47A63, 81P45}
\keywords{Noncommutative $L^p$-spaces, semifinite von Neumann algebras, three-point trace positivity, positive block matrices, Schatten norm compression, Bures--Hellinger metrics, metric completion}

\begin{document}
	
\begin{abstract}
	Let $(\N,\nu)$ be a finite tracial von Neumann algebra and set
	$\K_r(P,Q)=\nu((P^{1/2}QP^{1/2})^r)$. We prove that
	$$
	[\K_r(P_i,P_j)]_{i,j=1}^3\ge0,
	\qquad 1/2\le r\le1.
	$$
	The range $1/2<r<1$ is new even for matrices, and order three is sharp
	for $1/2\le r<1$. This single mechanism resolves two problems. First, if
	$(\mathcal R,\omega)$ is finite tracial and
	$H=[H_{ij}]_{i,j=1}^3\in M_3(\mathcal R)_+$, then
	$$
	[\omega(|H_{ij}|^p)]_{i,j=1}^3\ge0,
	\qquad 1\le p\le2,
	$$
	settling the Lin--van den Driessche conjecture on Schatten norm
	compression. Second, for a semifinite tracial von Neumann algebra
	$(\M,\tau)$, define
	$$
	d_t(A,B)^2=\frac{\tau(A)+\tau(B)}2-
	\|B^{t/4}A^{t/4}\|_{2/t}^{2/t}.
	$$
	We prove that $d_t$ is a complete metric on $L^1(\M,\tau)_+$ for
	$1\le t\le2$, including the previously open range $1<t<2$ between the
	Hellinger and Bures endpoints, and that
	$$
	d_t\text{ is a metric on }L^1(\M,\tau)_+
	\quad\Longleftrightarrow\quad
	t\in[1,2]\ \text{or}\ \M\text{ is abelian}.
	$$
	Its topology and Cauchy sequences agree with those of $L^1$. The same
	classification holds for unital tracial $C^*$-algebras and, with the known
	$t=0$ obstruction, resolves Problem~2 of Kom\'alovics and Moln\'ar.
\end{abstract}

	\maketitle
	
\section{Introduction}
\label{sec:introduction}

The main result of this paper is a three-point positivity theorem for a
nonlinear trace kernel. It provides a common mechanism for two
apparently different problems: a conjecture on Schatten norm compression of
positive $3\times3$ block matrices, and the previously open metricity range
between the Hellinger and Bures endpoints. We state the three principal
results first and then place them in context.

Let $(\M,\tau)$ be a von Neumann algebra with a faithful normal semifinite
trace; neither finiteness nor $\sigma$-finiteness is assumed. For
$0<p<\infty$, let $L^p(\M,\tau)$ denote the associated space of
$\tau$-measurable operators, with the standard product conventions of \cite{FackKosaki}.
We use the generalized H\"older inequality in
\cite[Theorem~4.2(i)]{FackKosaki}, including the range
of exponents below one.  If
$A,B\in L^1(\M,\tau)_+$ and $t>0$, then
$B^{t/4}A^{t/4}\in L^{2/t}(\M,\tau)$, and we set
\begin{align}
Q_t(A,B)
&:=\bigl\|B^{t/4}A^{t/4}\bigr\|_{2/t}^{\,2/t}\notag\\
&=\tau\!\left(
\left(A^{t/4}B^{t/2}A^{t/4}\right)^{1/t}
\right),\label{eq:Qt-intro}\\
d_t(A,B)^2&:=\frac{\tau(A)+\tau(B)}2-Q_t(A,B).
\label{eq:dt-intro}
\end{align}
Adjoint invariance of generalized singular values and H\"older's inequality
show that $Q_t$ is symmetric and
$0\le Q_t(A,B)\le\sqrt{\tau(A)\tau(B)}$. Thus $d_t$ is well defined as the
nonnegative square root for every $t>0$, even when $2/t<1$. When necessary, we write $Q_t^\tau$ and $d_t^\tau$ to specify
the underlying trace.

Let $\N$ be a von Neumann algebra equipped with a faithful
normal finite trace $\nu$. For $r>0$, define
\begin{equation}\label{eq:kernel}
	\K_r^\nu(P,Q)
	:=
	\nu\!\left((P^{1/2}QP^{1/2})^r\right),
	\qquad P,Q\in\N_+.
\end{equation}

Our main trace-kernel result is the following.

\begin{theorem}
\label{thm:three-point}
For every $1/2\le r\le1$ and $P_1,P_2,P_3\in\N_+$,
\begin{equation}\label{eq:three-point}
\bigl[\K_r^\nu(P_i,P_j)\bigr]_{i,j=1}^3\ge0.
\end{equation}
\end{theorem}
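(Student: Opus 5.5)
The plan is to reduce \eqref{eq:three-point} to a triangle inequality for an angle attached to the kernel. Write $k_{ij}=\K_r^\nu(P_i,P_j)$. First I record the elementary facts. We have
$$
k_{ij}=\nu\bigl(|P_j^{1/2}P_i^{1/2}|^{2r}\bigr)=\|P_j^{1/2}P_i^{1/2}\|_{2r}^{2r}\ge0,
$$
$k_{ij}=k_{ji}$ by adjoint invariance of singular values, and $k_{ii}=\nu(P_i^{2r})$. Since $2r\ge1$, the generalized H\"older inequality gives
$$
k_{ij}\le \|P_j^{1/2}\|_{4r}^{2r}\|P_i^{1/2}\|_{4r}^{2r}=\sqrt{k_{ii}k_{jj}} .
$$
Hence every principal $2\times2$ minor is nonnegative. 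After discarding the trivial case $P_i=0$ and rescaling by $\operatorname{diag}(k_{ii}^{-1/2})$, the matrix becomes a correlation matrix with off-diagonal entries $c_{ij}\in[0,1]$. Put $\theta_{ij}=\arccos c_{ij}\in[0,\pi/2]$. A $3\times3$ correlation matrix has determinant
$$
1-c_{12}^2-c_{13}^2-c_{23}^2+2c_{12}c_{13}c_{23},
$$
and this is nonnegative iff the three angles satisfy the triangle inequalities (together with $\theta_{12}+\theta_{13}+\theta_{23}\le2\pi$, which is automatic here since each angle is at most $\pi/2$). So the theorem is equivalent to
$$
\theta(P_1,P_3)\le\theta(P_1,P_2)+\theta(P_2,P_3)
$$
for the normalized angle $\theta(P,Q)=\arccos\bigl(\K_r(P,Q)/\sqrt{\K_r(P,P)\K_r(Q,Q)}\bigr)$. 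This reformulation also explains why order three is the natural limit: only triples reduce to an angle inequality.

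For the endpoints I would realize $\theta$ as a distance on a sphere quotiented by an isometric group action. At $r=1$, $k_{ij}=\langle P_i,P_j\rangle_{L^2(\nu)}$ is already a Gram matrix. At $r=1/2$, the variational formula $\nu(|Y|)=\max_{U}\,\mathrm{Re}\,\nu(UY)$ over unitaries $U$ shows that $k_{ij}$ is the maximal real inner product between the fibers
$$
\{P_i^{1/2}U:\ U\ \text{unitary}\}\quad\text{and}\quad\{P_j^{1/2}V:\ V\ \text{unitary}\}
$$
in $L^2(\nu)$. Right multiplication by unitaries is an isometry of $L^2(\nu)$, so $\theta$ is the quotient (minimal) spherical distance between these orbits, and the triangle inequality follows. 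Concretely: choose a representative of $P_2$ optimal for the pair $(1,2)$, move the representative of $P_3$ by a unitary so that it is optimal for $(2,3)$, and apply the spherical triangle inequality.

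The main obstacle is the interior range $1/2<r<1$, and my first attempt would generalize the fiber construction. I would look for a map $P\mapsto\Phi(P)$ into a Hilbert space, or a set of such vectors, of norm $\nu(P^{2r})^{1/2}$, such that $\K_r(P,Q)=\sup\mathrm{Re}\langle\Phi(P),\Phi(Q)\rangle$ and the supremum is attained along orbits of a group acting isometrically. The candidate is a polar-decomposition ansatz $\Phi(P)=P^{1/2}\,|{\cdot}|^{\,r-1/2}$-type vectors built from $P^{r}$ and partial isometries, tested against the duality
$$
\|Y\|_{2r}^{2r}=\sup\{\mathrm{Re}\,\nu(ZY):\ \|Z\|_{(2r)'}\le\|Y\|_{2r}^{2r-1}\}.
$$
If that fails, the fallback is an interpolation argument. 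I would show that $\cos\theta$ is a positive-definite function of a suitable geodesic parameter for each fixed triple, using the integral representation
$$
x^{r}=c_r\int_0^\infty\frac{x}{x+s}\,s^{r-1}\,ds \qquad (0<r<1)
$$
applied to $P^{1/2}QP^{1/2}$. This reduces $k_{ij}$ to superpositions of resolvent traces, and I would try to prove the triangle inequality for those first. I expect this step to be where the real work lies.
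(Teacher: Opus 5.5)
Your reduction and your endpoint arguments are correct. The entries are nonnegative and $k_{ij}\le\sqrt{k_{ii}k_{jj}}$, so positivity of the $3\times3$ matrix is equivalent to the triangle inequality for $\theta$. At $r=1$ the matrix is a Gram matrix. At $r=1/2$ the polar part of $P_j^{1/2}P_i^{1/2}$ extends to a unitary in a finite algebra, so $k_{ij}$ is the maximal real inner product between the orbits $P_i^{1/2}\mathcal U(\N)$ and $P_j^{1/2}\mathcal U(\N)$, and the quotient spherical distance satisfies the triangle inequality.

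However, the angle reformulation is only an equivalence. The range $1/2<r<1$ is the entire new content of the theorem; for matrices it is equivalent to the Lin--van den Driessche conjecture. That range is not proved: you list strategies but carry none of them out. The fiber ansatz has no candidate $\Phi$. Its $r=1/2$ mechanism also does not visibly transfer: it relies on the maximization over the unit ball of $\N$ reducing to unitaries, which act isometrically and leave $\|P^{1/2}U\|_2$ fixed. The dual ball of $L^{(2r)'}$ for $r>1/2$ has no such isometric-orbit structure.

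The resolvent route fails termwise. Take $\N=\C$, $P_1=a$, $P_2=b$ with $0<a\ne b$, and $f_s(x)=x/(x+s)$. The resulting kernel has $2\times2$ determinant
\[
a^2b^2\Bigl[\bigl((a^2+s)(b^2+s)\bigr)^{-1}-(ab+s)^{-2}\Bigr]<0,
\]
since $(a^2+s)(b^2+s)-(ab+s)^2=s(a-b)^2$. So the normalized entry exceeds $1$ and no angle exists. Positivity of $\K_r$ (here the rank-one kernel $a^rb^r$) appears only after integrating in $s$, not termwise.

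The missing idea is a one-versus-many Cauchy--Schwarz inequality for nonnegative coefficients:
\[
\Bigl(\sum_i c_i\K_r(A_i,R)\Bigr)^2\le\K_r(R,R)\sum_{i,j}c_ic_j\K_r(A_i,A_j),\qquad c_i\ge0 .
\]
The paper derives it from tracial Carlen--Lieb convexity of $W\mapsto\nu\bigl((L^*W^sL)^{1/s}\bigr)$ for $s=1/r\in[1,2]$. That convexity comes from two ingredients: joint convexity of $(W,X)\mapsto\nu(L^*W^sLX^{1-s})$, via Effros's perspective argument for commuting left and right multiplications on $L^2(\N,\nu)$, and the variational formula $s\,\nu(C^{1/s})=\inf_{X>0}\{\nu(CX^{1-s})+(s-1)\nu(X)\}$.

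For invertible $R$, set $D:=R^{-1/2}\bigl(\sum_ic_i(R^{1/2}A_iR^{1/2})^r\bigr)^{1/r}R^{-1/2}$. Then $\K_r(D,R)=\sum_ic_i\K_r(A_i,R)$ exactly. Convexity gives $\K_r(D,E)\le\sum_ic_i\K_r(A_i,E)$ for every $E$, hence $\K_r(D,D)\le\sum_{i,j}c_ic_j\K_r(A_i,A_j)$. Pairwise Cauchy--Schwarz for $(D,R)$ finishes, and singular $R$ is handled by regularization. So $D$ is a nonlinear substitute for the vector $\sum_ic_i\Phi(A_i)$ you were looking for, but it exists only for nonnegative combinations.

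That is exactly enough for three points. Constant-sign vectors are trivial. Every mixed-sign vector is, up to permutation and overall sign, $(u,v,-w)$ with $u,v,w\ge0$. For such a vector the form equals $S-2wL+w^2\K_r(P_3,P_3)$, where $S:=u^2\K_r(P_1,P_1)+2uv\K_r(P_1,P_2)+v^2\K_r(P_2,P_2)$ and $L:=u\K_r(P_1,P_3)+v\K_r(P_2,P_3)$. The case $m=2$ gives $L^2\le S\,\K_r(P_3,P_3)$, so the form is at least $\bigl(\sqrt S-w\sqrt{\K_r(P_3,P_3)}\bigr)^2\ge0$. Four points admit a two-versus-two sign pattern that this inequality cannot control, which is why the method stops at order three.
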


The endpoint $r=1$ is the Gram matrix
$[\nu(P_iP_j)]_{i,j=1}^3$ in $L^2(\N,\nu)$. For matrix density operators,
the endpoint $r=1/2$ is the three-state root-fidelity matrix. Its positivity
was asserted in \cite[Proposition~1]{FannesDeMeloRogaZyczkowski}; Lin and
van den Driessche subsequently pointed out a serious gap in the published
proof \cite[p.~835]{LinDriessche}. The endpoint follows independently from
Drury's trace-norm theorem \cite[Corollary~1.3]{Drury}, and is also recovered
by our argument. To the best of our knowledge, the intermediate range
$1/2<r<1$ is new even for matrices.

Theorem~\ref{thm:three-point} is genuinely a finite-order statement, not an
assertion that $\K_r^\nu$ is a positive definite kernel on arbitrary finite
sets. The explicit four-pure-state construction in
Remark~\ref{rem:three-point-scope} gives a negative eigenvalue for every
$1/2\le r<1$; hence order three is sharp throughout this nontrivial range.
At $r=1$, by contrast, $\K_1^\nu$ is a Gram kernel at arbitrary order. Our
proof of \eqref{eq:three-point} is not an interpolation between the two
endpoints; it is based on a one-versus-many trace inequality that controls
all sign patterns in a three-dimensional quadratic form.

The first application is a three-block Schatten compression theorem.

\begin{theorem}
\label{thm:tracial-block-compression}
Let $(\mathcal R,\omega)$ be a von Neumann algebra with a faithful
normal finite trace $\omega$, and let $H=[H_{ij}]_{i,j=1}^3\in M_3(\mathcal R)_+$.
Then, for every $1\le p\le2$,
\begin{equation}\label{eq:block-compression}
\bigl[\omega(|H_{ij}|^p)\bigr]_{i,j=1}^3\ge0.
\end{equation}
\end{theorem}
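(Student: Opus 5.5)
Our plan is to reduce Theorem~\ref{thm:tracial-block-compression} to Theorem~\ref{thm:three-point} with $r=p/2\in[1/2,1]$. The key is a factorization of a positive block matrix as a Gram matrix. Since $H\in M_3(\mathcal R)_+$, write $H=X^*X$ with $X=H^{1/2}\in M_3(\mathcal R)$. Let $X_j\in M_3(\mathcal R)$ be the $j$-th block column of $X$, viewed as an element of $M_{3,1}(\mathcal R)$. Then
\[
H_{ij}=X_i^*X_j,
\qquad
|H_{ij}|^2=X_j^*X_iX_i^*X_j .
\]
Work in $\N=M_3(\mathcal R)$ with the finite trace $\nu=\Tr\otimes\omega$, and set $P_i:=X_iX_i^*\in\N_+$. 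These are $3\times3$ block matrices supported in the range of the $i$-th column.

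We then compute
\[
\omega\bigl(|H_{ij}|^p\bigr)=\omega\Bigl(\bigl(X_j^*P_iX_j\bigr)^{p/2}\Bigr).
\]
Consider $Y=P_i^{1/2}X_j$, for which $Y^*Y=X_j^*P_iX_j$. The nonzero parts of $|Y|^{2r}$ and $|Y^*|^{2r}$ are unitarily equivalent through the polar decomposition, and the traces match under $\nu$. This gives
\[
\omega\bigl((Y^*Y)^r\bigr)=\nu\bigl((YY^*)^r\bigr)=\nu\Bigl(\bigl(P_i^{1/2}X_jX_j^*P_i^{1/2}\bigr)^r\Bigr)=\K_r^\nu(P_i,P_j).
\]
Here $X_j$ is regarded as the $3\times 3$ block matrix with only its $j$-th column nonzero, so that $X_j^*P_iX_j$ lives in the $(j,j)$ corner, where $\nu$ restricts to $\omega$. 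Hence
\[
\bigl[\omega(|H_{ij}|^p)\bigr]_{i,j=1}^3=\bigl[\K_{p/2}^\nu(P_i,P_j)\bigr]_{i,j=1}^3 .
\]
Theorem~\ref{thm:three-point} with $r=p/2\in[1/2,1]$ then gives positivity.

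The main point needing care is the trace identity $\tau(f(Y^*Y))=\tau(f(YY^*))$ for $f(x)=x^r$ with $f(0)=0$ in a finite von Neumann algebra. It follows because $Y^*Y$ and $YY^*$ restricted to their supports are unitarily equivalent via the partial isometry of the polar decomposition, and a finite trace is invariant under this equivalence. The remaining bookkeeping is identifying $M_{3,1}(\mathcal R)$ inside $M_3(\mathcal R)$ so that all objects lie in the single finite algebra $(\N,\nu)$. Nothing beyond Theorem~\ref{thm:three-point} seems to be required, so the substantive difficulty is entirely in that theorem.
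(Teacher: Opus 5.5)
Your proposal is correct and follows the paper's proof essentially step for step: the factorization $H=X^*X$, the positive elements $P_i=X_iX_i^*$, the intertwiner $P_i^{1/2}X_j$ with $|H_{ij}|^2=(P_i^{1/2}X_j)^*(P_i^{1/2}X_j)$ and the polar-decomposition trace identity, and then Theorem~\ref{thm:three-point} in $(M_3(\mathcal R),\Tr\otimes\omega)$ at $r=p/2$. The only difference is cosmetic: you pad $X_j$ to a square element of $M_3(\mathcal R)$, so the trace identity becomes the standard $\nu(f(Y^*Y))=\nu(f(YY^*))$ followed by restricting $\nu$ to the $(j,j)$ corner, whereas the paper keeps $X_j\in M_{3,1}(\mathcal R)$ and verifies the rectangular identity directly.
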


For matrix blocks, \eqref{eq:block-compression} says precisely that
$[\|H_{ij}\|_p^p]_{i,j=1}^3$ is positive semidefinite. This resolves the
conjecture of Lin and van den Driessche
\cite[Conjecture~5.1]{LinDriessche}. Conversely, in the matrix setting,
Theorem~\ref{thm:three-point} follows from this compression statement by
applying it, with $p=2r$, to the positive block matrix
$H=[P_i^{1/2}P_j^{1/2}]_{i,j=1}^3$. Indeed, $H=X^*X$ for
$X=[P_1^{1/2}\ P_2^{1/2}\ P_3^{1/2}]$, and
$\|H_{ij}\|_{2r}^{2r}=\K_r^{\Tr}(P_j,P_i)
=\K_r^{\Tr}(P_i,P_j)$. Thus the two matrix statements are equivalent. The
endpoint $p=1$ is Drury's theorem \cite{Drury}, whereas $p=2$ follows from
Marcus and Watkins \cite[Theorem~1]{MarcusWatkins}; see also \cite{LiZhang}
for related unitarily invariant norm formulations. In the scalar-block case,
the theorem of FitzGerald and Horn \cite{FitzGeraldHorn} shows that the
three-block restriction is essential: for four scalar blocks,
counterexamples exist for every nonintegral $1<p<2$. Related higher-block
norm-compression statements also fail \cite{Audenaert,LinDriessche}.

The second application concerns the metric geometry of the entire
semifinite $L^1$-cone.
If $\A$ is a unital $C^*$-algebra with a faithful tracial positive linear
functional $\tau$, the trace expressions in \eqref{eq:Qt-intro} and
\eqref{eq:dt-intro}, interpreted by continuous functional calculus in $\A$,
define $Q_t^\tau$ and $d_t^\tau$ on $\A_+$.

The metricity, completeness, and sharpness conclusions are summarized next.

\begin{theorem}
\label{thm:classification}
\begin{enumerate} 
  \item[\textup{(i)}] For every $1\le t\le2$, $d_t$ is a complete
	metric on $L^1(\M,\tau)_+$. The topology induced by $d_t$ is the
	$L^1$-topology, and a sequence in $L^1(\M,\tau)_+$ is
	$d_t$-Cauchy if and only if it is $L^1$-Cauchy. 

\item[\textup{(ii)}] For every $t>0$,
\begin{equation}\label{eq:sharp-classification}
d_t\text{ is a metric on }L^1(\M,\tau)_+
\quad\Longleftrightarrow\quad
t\in[1,2]\ \text{or}\ \M\text{ is abelian}.
\end{equation}
If $\M$ is nonabelian and $t\notin[1,2]$, failure of the triangle inequality
already occurs on bounded positive elements with finite-trace support; when
$\tau(\one)<\infty$, the counterexample may be chosen positive invertible.

\item[\textup{(iii)}] If $\A$ is a unital $C^*$-algebra with a faithful
tracial positive linear functional $\tau$, then, for every $t>0$,
$d_t^\tau$ is a metric on $\A_+$ if and only if $t\in[1,2]$ or $\A$ is
abelian.
\end{enumerate}
\end{theorem}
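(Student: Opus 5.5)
We plan to obtain (i) from Theorem~\ref{thm:three-point} through a square-root embedding, and (ii)--(iii) through explicit counterexamples outside $[1,2]$.

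\textbf{Triangle inequality on $L^1(\M,\tau)_+$ for $1\le t\le 2$.} Fix $A_1,A_2,A_3\in L^1(\M,\tau)_+$ and reduce to a finite tracial algebra. Let $e$ be the union of the supports of the $A_i$. This support is $\sigma$-finite, but its trace need not be finite. We therefore first treat $A_i$ bounded with finite-trace support. On $e\M e$ with the finite trace $\nu=\tau|_{e\M e}$, put $P_i=A_i^{t/2}$ and $r=1/t\in[1/2,1]$. Then
\[
Q_t(A_i,A_j)=\nu\bigl((A_i^{t/4}A_j^{t/2}A_i^{t/4})^{1/t}\bigr)=\K_r^\nu(P_i,P_j),
\]
and $Q_t(A_i,A_i)=\tau(A_i)$. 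By Theorem~\ref{thm:three-point}, $G=[Q_t(A_i,A_j)]_{i,j=1}^3$ is positive semidefinite, so it is the Gram matrix of vectors $v_1,v_2,v_3\in\mathbb R^3$. Since $G$ is real symmetric, we may take the $v_i$ real. By construction $\|v_i-v_j\|^2=\tau(A_i)+\tau(A_j)-2Q_t(A_i,A_j)=2d_t(A_i,A_j)^2$. Hence $d_t=\|v_i-v_j\|/\sqrt2$ on the three points, and the triangle inequality follows from the Euclidean one.

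For general $A_i\in L^1$, we approximate by $A_i^{(n)}=e_nA_ie_n$, where the $e_n$ are spectral projections of $A_i$ (truncations $\min(A_i,n)\chi_{[1/n,n]}(A_i)$). To pass to the limit, we need continuity of $Q_t$ in $L^1$. From $\|A^{t/4}-B^{t/4}\|_{4/t}^{4/t}\le\|A-B\|_1$ (Powers--Størmer/Birman--Koplienko--Solomyak type, valid in semifinite $L^p$, since $x\mapsto x^{s}$ is operator concave for $s\le1$) together with Hölder, we get $|Q_t(A,B)-Q_t(A',B)|\to0$ when $A'\to A$ in $L^1$. Positivity $d_t(A,B)=0\Rightarrow A=B$ follows from the equality case of Hölder, or more cleanly from the inequality
\[
\|A-B\|_1\ \lesssim\ d_t(A,B)\,\bigl(\tau(A)+\tau(B)\bigr)^{1/2},
\]
which we aim to derive from $d_t\ge d_1$ (Hellinger) or $d_t\ge d_2$ (Bures). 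Monotonicity of $Q_t$ in $t$ is an Araki--Lieb--Thirring consequence, so $d_1\le d_t\le d_2$ or the reverse. We also need the standard bounds $\|A-B\|_1\le 2\sqrt2\,d_{\rm Bures}\sqrt{\tau(A)+\tau(B)}$ and $d_t^2\le\tfrac12\|A-B\|_1$ (Powers--Størmer for one endpoint and the Fuchs--van de Graaf analogue for the other). These comparisons give equality of topologies and of Cauchy sequences. Completeness then follows from completeness of $L^1$ together with continuity of $d_t$ in $L^1$.

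\textbf{Sharpness and abelian case.} If $\M$ is abelian, $Q_t(A,B)=\tau((AB)^{1/2})$ for every $t$, so $d_t$ is the Hellinger metric. For nonabelian $\M$ there is a nonzero $x$ with $x^2=0$, so $\M$ contains a copy of $M_2(\C)$ under a finite-trace projection (after cutting down), with a multiple of the normalized trace. It then suffices to exhibit, for $t<1$ and for $t>2$, positive (invertible, by adding $\epsilon\one$ and using continuity) $2\times2$ matrices violating the triangle inequality, embedded via $a\mapsto a\otimes f$ for a finite projection $f$. For $t>2$ we would try three pure states $\psi_\theta$ at angles $0,\theta,2\theta$. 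For pure states $Q_t=|\langle\psi,\phi\rangle|^{4/t}$, so $d^2=1-\cos^{4/t}\theta$. The triangle inequality demands $\sqrt{1-\cos^{4/t}2\theta}\le2\sqrt{1-\cos^{4/t}\theta}$; as $\theta\to0$ the left side is about $\sqrt{(4/t)\cdot2\theta^2}$ and the right side is about $2\sqrt{(2/t)\theta^2/1}$, both of order $\theta$. So the counterexample must instead use mixed states, for instance commuting versus noncommuting perturbations, with a second-order expansion of $Q_t$ around a diagonal state. For $t<1$ we use the analogous small-perturbation expansion: the Hessian of $d_t^2$ at a faithful state defines a quadratic form, and failure of the triangle inequality will come from $d_t^2$ not being the square of a seminorm-like quantity, that is, from concavity/convexity defects in the noncommutative directions.

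\textbf{Main obstacle.} Finding explicit sharpness counterexamples for $t\notin[1,2]$ uniformly in $t$ is the hardest step. Part (iii) is then handled identically: positivity on three points uses the GNS weak closure, where $\tau$ extends to a normal trace on the finite algebra, and the counterexamples live in a $C^*$-subalgebra whenever $\A$ is nonabelian.
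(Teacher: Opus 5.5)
The genuine gap is the ``only if'' direction of (ii), on which (iii) also rests. You never exhibit a strict triangle defect for $t\notin[1,2]$, and the routes you sketch do not lead to one.

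For $t>2$, the reason you give for abandoning pure states does not hold. For unit vectors, $Q_t(\psi\psi^*,\phi\phi^*)=|\langle\psi,\phi\rangle|^{2/t}$ (not $4/t$). A first-order small-angle comparison is inconclusive for every $t$, so it tells you nothing about pure states. In fact they work. Let $P_\theta$ be the projection onto $(\cos\theta,\sin\theta)$. Then $d_t(P_0,P_{\pi/2})=1$, $d_t(P_0,P_\varepsilon)\le\sin\varepsilon$, and $d_t(P_\varepsilon,P_{\pi/2})\le1-\tfrac12\sin^{2/t}\varepsilon$. So the triangle inequality fails as soon as $\sin^{1-2/t}\varepsilon<1/2$. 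The paper imports such a rank-one obstruction from Kom\'alovics--Moln\'ar and regularizes it by $\varepsilon I_2$.

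For $0<t<1$, the Hessian idea cannot work as described. The function $d_t(A,\cdot)^2$ attains its minimum $0$ at $A$, so its Hessian is positive semidefinite and its square root is a seminorm. Minkowski then yields the triangle inequality to leading order. A defect must come from higher-order terms or from a non-infinitesimal configuration, and you give neither. The missing idea is the paper's explicit triple:
\begin{itemize}
\item[(a)] $P=uu^*$ and $Q=vv^*$ with $\langle u,v\rangle=1/2$;
\item[(b)] a diagonal $C_t=\mu\,\mathrm{diag}(\alpha,\beta)$ tuned so that $d_t(P,Q)^2=1-2^{-q}$ and $d_t(P,C_t)^2=d_t(Q,C_t)^2=(1-\mu)/2$.
\end{itemize}
Then $d_t(P,Q)^2-4d_t(P,C_t)^2=2\mu-1-2^{-q}$, and its positivity is exactly strict Clarkson with $s=2/(2-t)$, $q=2/t$.

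A second gap is in (iii). The claim that the counterexamples ``live in a $C^*$-subalgebra whenever $\A$ is nonabelian'' is false in general. A nonabelian unital $C^*$-algebra with a faithful trace can have no nontrivial projections (e.g.\ $C^*_r(\mathbb F_2)$), and hence no copy of $M_2(\C)$. You need a transfer step:
\begin{itemize}
\item[(1)] build the defect $X_1,X_2,X_3$ in $\pi_\tau(\A)''$ using (ii);
\item[(2)] approximate each $X_i$ by a bounded net in $\A_+$ converging strongly (Kaplansky density);
\item[(3)] use strong continuity of continuous functional calculus and of multiplication on bounded sets, together with normality of $\bar\tau$, to get $d_t^\tau(a_{i,\lambda},a_{j,\lambda})\to d_t^{\bar\tau}(X_i,X_j)$.
\end{itemize}

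Your part (i) otherwise follows the paper's route. Your square-zero construction of a finite-trace $M_2(\C)$ corner is a valid alternative to Lemma~\ref{lem:finite-M2-corner}. There are some loose ends in (i):
\begin{itemize}
\item[(a)] ALT gives $Q_1\le Q_t\le Q_2$, i.e.\ $d_2\le d_t\le d_1$, so the lower $L^1$-bound must come from the Bures side.
\item[(b)] The unnormalized inequality $\|A-B\|_1^2\le4(\tau(A)+\tau(B))\,d_2(A,B)^2$ for unbounded semifinite densities needs a proof. The paper proves it using $\one_{(0,\infty)}(A-B)$ and the polar decomposition of $A^{1/2}B^{1/2}$.
\item[(c)] Boundedness of $\tau$ along $d_t$-Cauchy sequences also needs to be stated. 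It follows from $|\sqrt{\tau(A)}-\sqrt{\tau(B)}|\le\sqrt2\,d_t(A,B)$.
\end{itemize}
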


The endpoint metrics in Theorem~\ref{thm:classification}\textup{(i)} are well established:
$d_1(A,B)=2^{-1/2}\|A^{1/2}-B^{1/2}\|_2$ is the Hellinger metric up to
normalization, while $Q_2$ is root fidelity. On the unit-trace density space,
$d_2$ agrees with the Bures metric of
\cite[Definition~1.1]{FarenickRahaman}.
Our family $d_t^\tau$ has the normalization used for $d_p^\tau$
in \cite{KomalovicsMolnar}; their Bures distance $d_B^\tau$
equals $\sqrt2\,d_2^\tau$. Bures metricity,
data processing, and topological questions in tracial operator algebras have
been studied in \cite{AguilarBeheraHjelmborgEtAl,AguilarBeheraOmlandWu,
FarenickJaquesRahaman,FarenickRahaman}. These works concern the
Bures endpoint. The new metric content here is the range $1<t<2$, its
derivation from Theorem~\ref{thm:three-point}, and the sharp classification
on the full, generally unbounded, semifinite $L^1$-cone; our argument also
identifies the induced topology and Cauchy structure.

Kom\'alovics and Moln\'ar \cite[Problem~2]{KomalovicsMolnar} asked whether,
for every $n\ge2$, $d_t^{\Tr}$ is a metric for $1<t<2$, fails to be one for
$0\le t<1$, and is a metric for $1<t<2$ on every unital tracial
$C^*$-algebra. At $t=0$ the power expression is understood through its
symmetrized Lie--Trotter, or log-Euclidean, limit.

Several boundary results were known. On rank-one projections, Kom\'alovics
and Moln\'ar proved that $d_t^{\Tr}$ is a metric exactly for $0<t\le2$;
their Proposition~8 supplies a $2\times2$ triangle defect for every $t>2$
\cite[Proposition~8]{KomalovicsMolnar}. At $t=0$, Bhatia, Gaubert, and Jain
exhibited a positive definite $2\times2$ triangle defect for the same
log-Euclidean distance, up to the factor $\sqrt2$
\cite[Section~2]{BhatiaGaubertJain}. More recently, Zhang proved the
underlying nonnegativity estimate for every $t>0$ and found a
positive definite $2\times2$ violation at $t=1/2$
\cite[Theorem~3.3 and Proposition~4.2]{Zhang}; the rest of $0<t<1$ remained
open.

For matrices, with $z=1/t$, the squared distance
$d_t^{\Tr}(A,B)^2$ coincides with the symmetric
$\alpha$-$z$-Bures--Wasserstein divergence at $\alpha=1/2$
introduced in \cite{DinhLeVoVuong}.  Related work of the first author gives conditional
quasi-triangle and stability estimates under spectral or local hypotheses
\cite{VuongQuantitative}, but leaves exact global metricity for $1/2<z<1$
open. The present theorem proves the exact global inequality on the full
semifinite cone and determines every admissible parameter; those related
estimates imply neither this conclusion nor the three-point positivity above.

The common proof mechanism begins with a tracial Carlen--Lieb convexity
principle \cite{CarlenLieb}, proved directly on $L^2(\N,\nu)$ by Effros's
operator-perspective method \cite{Effros}, using the Hansen--Pedersen Jensen
inequality \cite{HansenPedersen}; compare \cite{Bekjan}. It yields a
transported Minkowski inequality and a
one-versus-many Cauchy--Schwarz estimate, which controls all sign patterns in
a three-dimensional quadratic form and proves
Theorem~\ref{thm:three-point}. No matrix approximation is used. Factoring a
positive block matrix as $[X_i^*X_j]$ then identifies its entrywise
$p$-compression with Theorem~\ref{thm:three-point} at $r=p/2$, proving
Theorem~\ref{thm:tracial-block-compression}.

For $1\le t\le2$, the substitution $r=1/t$, $P_i=A_i^{t/2}$ realizes every
three-point subset of a finite tracial positive cone in a real Hilbert space,
with Euclidean distances $\sqrt2d_t$. Spectral compression of each fixed
triple and $L^1$-continuity of $Q_t$ extend the triangle inequality to the
semifinite cone without a global finite-trace exhaustion. Endpoint comparison
then gives separation, the $L^1$-topology, coincidence of Cauchy sequences,
and completeness.

For sharpness, strict $2\times2$ defects for $0<t<1$ come from Clarkson's
inequality, while the case $t>2$ uses the rank-one obstruction of
\cite[Proposition~8]{KomalovicsMolnar}. Every nonabelian semifinite algebra
has a finite-trace corner containing a unital copy of $M_2(\C)$, so these defects prove
\eqref{eq:sharp-classification}; in the abelian case every $d_t$ reduces to
the Hellinger metric. The tracial GNS envelope and positive Kaplansky density
give the $C^*$-algebra assertion. Together with the known logarithmic
obstruction \cite{BhatiaGaubertJain}, this resolves
\cite[Problem~2]{KomalovicsMolnar}.

We also record data processing for $d_t$ from the known
$\alpha$-$z$ inequality \cite{HiaiJencova}, together with metric completions
of finite-support and $C^*$-algebraic cores.

Sections~\ref{sec:finite-tracial} and \ref{sec:block} prove
Theorems~\ref{thm:three-point} and
\ref{thm:tracial-block-compression}. Sections~\ref{sec:semifinite} and
\ref{sec:classification} prove the three parts of
Theorem~\ref{thm:classification}. Section~\ref{sec:zero-endpoint} treats the
logarithmic endpoint, while Section~\ref{sec:cores} develops the further
consequences and comments on a possible type~III extension.
	\section{Three-point trace kernels in tracial von Neumann algebras}
	\label{sec:finite-tracial}
	
Throughout this section, $(\N,\nu)$ denotes an arbitrary von Neumann
algebra equipped with a faithful normal finite trace; thus
$\nu(\one)<\infty$, but the normalization $\nu(\one)=1$ is not
required. The algebra $\N$ is not assumed to be hyperfinite. 
	Both $Q_t$ and $d_t$ are computed with respect to $\nu$;
	we omit the superscript $\nu$ in this section.

	The analytic starting point is a direct realization of the required
	Carlen--Lieb convexity \cite{CarlenLieb} through commuting left and right
	multiplication operators on the tracial $L^2$-space. The trace $\nu$ need not be normalized, and $\N^{++}$
	denotes the cone of positive invertible elements of $\N$.
	Throughout this section, we write $\K_r=\K_r^\nu$.
	For any other faithful trace $\omega$ the notation $\K_r^\omega$
	denotes the same kernel with $\nu$ replaced by $\omega$.
By the trace property,
\begin{equation}\label{eq:kernel-basic}
	\K_r(P,Q)=\K_r(Q,P),
	\qquad
	\K_r(P,P)=\nu(P^{2r}).
\end{equation}
	Moreover,
	\begin{equation}\label{eq:kernel-Lp}
		\K_r(P,Q)=\|Q^{1/2}P^{1/2}\|_{2r}^{2r}.
	\end{equation}
	Hence, for $r\ge1/2$, noncommutative H\"older gives
	\begin{equation}\label{eq:pairwise-CS}
		\K_r(P,Q)^2\le\K_r(P,P)\K_r(Q,Q).
	\end{equation}
	
	We begin with the tracial form of Carlen--Lieb convexity needed below.
	
	\begin{proposition}
		\label{prop:tracial-CL}
		Let $1\le s\le2$ and $L\in\N$. Then
		\begin{equation}\label{eq:CL-functional}
			F_{s,L}(W):=\nu\!\left((L^*W^sL)^{1/s}\right),
			\qquad W\in\N_+,
		\end{equation}
		is convex and positively homogeneous. Consequently,
		\begin{equation}\label{eq:CL-subadditivity}
			F_{s,L}\!\left(\sum_{j=1}^m c_jW_j\right)
			\le\sum_{j=1}^m c_jF_{s,L}(W_j),
			\qquad c_j\ge0.
		\end{equation}
	\end{proposition}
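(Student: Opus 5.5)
Positive homogeneity is immediate: $F_{s,L}(\lambda W)=\nu((\lambda^s L^*W^sL)^{1/s})=\lambda F_{s,L}(W)$ for $\lambda\ge0$. A positively homogeneous convex function is subadditive, since $F(W_1+W_2)=2F(\tfrac12W_1+\tfrac12W_2)\le F(W_1)+F(W_2)$. Combined with homogeneity this gives \eqref{eq:CL-subadditivity} by induction on $m$. So everything reduces to convexity.

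For convexity I follow the route announced in the introduction: Effros's operator perspective on $L^2(\N,\nu)$. First I use a variational formula to write $\nu((L^*W^sL)^{1/s})$ through a quantity that is jointly convex in the right variables. Put $q=1/s\in[1/2,1]$. For $X\ge0$ one has the concavity-type representation
\[
\nu(X^{q})=\inf_{Z>0}\Bigl\{q\,\nu(XZ^{1-1/q})+(1-q)\,\nu(Z)\Bigr\}.
\]
This follows from the scalar Young inequality $x^q\le qxz^{q-1}+(1-q)z$ together with a tracial Peierls/Klein-type argument; equality holds at $Z=X$. Taking $X=L^*W^sL$, the term to control is $\nu(L^*W^sL\,Z^{1-s})$. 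It suffices to show that $(W,Z)\mapsto\nu(L^*W^sLZ^{1-s})$ is jointly convex for $1\le s\le2$. An infimum over $Z$ of a function jointly convex in $(W,Z)$ is convex in $W$, so convexity of $F_{s,L}$ follows.

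For the joint convexity, write $\nu(L^*W^sLZ^{1-s})=\langle L,\ \mathcal L_W^{\,s}\mathcal R_Z^{\,1-s}L\rangle_{L^2}$. Here $\mathcal L_W$ and $\mathcal R_Z$ are left and right multiplication on $L^2(\N,\nu)$. These are commuting positive operators, invertible when $W,Z\in\N^{++}$. Then
\[
\mathcal L_W^{\,s}\mathcal R_Z^{\,1-s}=\mathcal R_Z\,g(\mathcal R_Z^{-1}\mathcal L_W),\qquad g(x)=x^s,
\]
which is the operator perspective of $g$. Since $x^s$ is operator convex for $1\le s\le2$, Effros's theorem says the perspective $(A,B)\mapsto B^{1/2}g(B^{-1/2}AB^{-1/2})B^{1/2}$ is jointly operator convex on commuting pairs. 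Its proof uses the Hansen--Pedersen Jensen inequality $g(\sum C_i^*A_iC_i)\le\sum C_i^*g(A_i)C_i$ for $\sum C_i^*C_i=\one$. The maps $W\mapsto\mathcal L_W$ and $Z\mapsto\mathcal R_Z$ are linear, and the left and right multiplications commute for all arguments. Hence the pair $(\mathcal L_{\lambda W_1+(1-\lambda)W_2},\mathcal R_{\lambda Z_1+(1-\lambda)Z_2})$ is the convex combination of the pairs. Taking the vector state at $L$ gives joint convexity.

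The main obstacle is rigor at the boundary and in infinite dimensions. The infimum formula needs $Z$ invertible. Also, $W$ may be noninvertible, and then $\mathcal R_Z^{-1}\mathcal L_W$ and $\mathcal L_W^s$ must be handled with care. I would first prove convexity on $\N^{++}$, where all functional calculus is bounded and the Effros argument is clean; there $\mathcal R_Z^{-1}\mathcal L_W$ is a positive invertible operator because it is a product of commuting positive operators. I would then pass to general $W\in\N_+$ by replacing $W$ with $W+\varepsilon\one$ and letting $\varepsilon\downarrow0$. This uses norm continuity of $W\mapsto(L^*W^sL)^{1/s}$ and continuity of $\nu$ in norm, which holds since $\nu$ is finite. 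The infimum representation also requires checking the tracial inequality $\nu(X^q)\le q\nu(XZ^{1-1/q})+(1-q)\nu(Z)$. I would do this via operator concavity of $x\mapsto x^q$: the tangent-line bound gives $X^q\le$ the first-order expansion at $Z$ after taking the trace. This is the standard Carlen--Lieb/Klein trace inequality, using that the trace of a differentiable concave function lies below its tangent.
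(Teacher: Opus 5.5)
Your proposal follows essentially the paper's proof. It has the same ingredients: joint convexity of $(W,Z)\mapsto\nu(L^*W^sLZ^{1-s})$ via commuting left and right multiplications on $L^2(\N,\nu)$ and the Hansen--Pedersen form of Effros's perspective inequality (you correctly note that cross-commutation of every $\mathcal L_{W_i}$ with every $\mathcal R_{Z_j}$ is what is used), a tracial variational formula for $\nu(C^{1/s})$, convexity of a partial infimum, and homogeneity. The only real difference is how you obtain the variational formula. You use the tangent-line bound for the concave map $X\mapsto\nu(X^{1/s})$ at the point $Z^{s}$. The paper uses the supporting-hyperplane bound for the convex map $A\mapsto\nu(A^sB^{1-s})$ at $A=B$. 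Both yield the same tracial Young inequality $s\,\nu(C^{1/s})\le\nu(CZ^{1-s})+(s-1)\nu(Z)$ for $Z\in\N^{++}$.

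There are three slips to fix.
\begin{enumerate}
\item The scalar inequality must read $x^q\le qxz^{1-1/q}+(1-q)z$. With the exponent $q-1$ it is false: take $q=1/2$, $x=1$, $z=2^{-2/3}$.
\item Equality in your (correctly stated) trace formula holds at $Z=X^{q}=X^{1/s}$, not at $Z=X$.
\item Most substantively, the infimum argument needs $\inf_Z\{\cdots\}=\nu(X^q)$ exactly at $X=L^*W_i^sL$. This $X$ is non-invertible whenever $L$ is, even for $W_i\in\N^{++}$. That is precisely the situation in Lemma~\ref{lem:transported-Minkowski}, where $L=B^{-1/2}E^{1/2}$ with $E$ possibly singular. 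Then $X^{1/s}\notin\N^{++}$, so you must show the infimum over invertible $Z$ is still approached. Your $\varepsilon$-regularization of $W$ does not address this. What is needed is the paper's approximation $Z_\varepsilon=X^{1/s}+\varepsilon\one$, using the scalar bound $0\le a-a^s(a+\varepsilon)^{1-s}\le(s-1)\varepsilon$ together with $\nu(\one)<\infty$.
\end{enumerate}
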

	
	\begin{proof}
		If $s=1$, then
		\[
		F_{1,L}(W)
		=
		\nu(L^*WL)
		=
		\nu(LL^*W),
		\]
		which is linear in $W$. We therefore assume throughout the rest of
		the proof that
		$
		1<s\le2.
		$
		
		We divide the proof into three steps.
		
		\smallskip
		\noindent
		\emph{Step 1: an Ando-type jointly convex functional.}
Equip $L^2(\N,\nu)$ with the inner product
\[
\langle Y,Z\rangle_2:=\nu(Y^*Z).
\]
Since $\nu(\one)<\infty$ and $L\in\N$, we have
\[
\|L\|_2^2
=
\nu(L^*L)
\le
\|L\|_\infty^2\nu(\one)
<\infty.
\]
Thus $L\in L^2(\N,\nu)$.

For $W,X\in\N^{++}$, define the left- and
right-multiplication operators on $L^2(\N,\nu)$ by
\[
\lambda_W(Y):=WY,
\qquad
\rho_X(Y):=YX,
\qquad Y\in L^2(\N,\nu).
\]
		These operators are bounded and positive. Indeed, for every
$Y\in L^2(\N,\nu)$,
\[
\|\lambda_WY\|_2
\le \|W\|_\infty\|Y\|_2,
\qquad
\|\rho_XY\|_2
\le \|X\|_\infty\|Y\|_2,
\]
while
\begin{align*}
	\langle Y,\lambda_WY\rangle_2
	&=
	\nu(Y^*WY)
	=
	\|W^{1/2}Y\|_2^2,\\
	\langle Y,\rho_XY\rangle_2
	&=
	\nu(Y^*YX)
	=
	\|YX^{1/2}\|_2^2.
\end{align*}
Since $W,X\in\N^{++}$, there exist
$\varepsilon_W,\varepsilon_X>0$ such that
\[
W\ge\varepsilon_W\one,
\qquad
X\ge\varepsilon_X\one.
\]
It follows that
\[
\lambda_W\ge\varepsilon_W I_{L^2},
\qquad
\rho_X\ge\varepsilon_X I_{L^2},
\]
so both operators are invertible. Here and below, real powers of
positive invertible elements and operators are defined by functional
calculus. Its compatibility with left and right multiplication gives
\[
\lambda_W^a=\lambda_{W^a},
\qquad
\rho_X^a=\rho_{X^a},
\qquad a\in\mathbb R.
\]
Every left-multiplication operator commutes with every
right-multiplication operator:
\[
\lambda_W\rho_X(Y)
=
WYX
=
\rho_X\lambda_W(Y).
\]
In particular,
\begin{equation}\label{eq:cross-commutation}
	[\lambda_{W_i},\rho_{X_j}]=0,
	\qquad i,j\in\{1,2\}.
\end{equation}

We next recall the operator-perspective argument in the precise
Hilbert-space form needed here. Let
\[
f_s(u):=u^s,\qquad u>0.
\]
This function is operator convex for $1\le s\le2$.
Suppose that $P_i,Q_i$, $i=1,2$, are bounded positive
invertible operators on a Hilbert space such that
\[
[P_i,Q_j]=0,
\qquad i,j\in\{1,2\}.
\]
For $0<\theta<1$, put
\[
P_\theta:=\theta P_1+(1-\theta)P_2,
\qquad
Q_\theta:=\theta Q_1+(1-\theta)Q_2,
\]
and define
\[
C_1
:=
\theta^{1/2}Q_1^{1/2}Q_\theta^{-1/2},
\qquad
C_2
:=
(1-\theta)^{1/2}Q_2^{1/2}Q_\theta^{-1/2}.
\]
Then
\[
C_1^*C_1+C_2^*C_2=\one
\]
and
\[
Q_\theta^{-1/2}P_\theta Q_\theta^{-1/2}
=
\sum_{i=1}^2
C_i^*
\bigl(Q_i^{-1/2}P_iQ_i^{-1/2}\bigr)
C_i.
\]
Effros formulated the corresponding perspective theorem in the
matrix setting \cite[Theorem~2.2]{Effros}. Since its proof is based on
the Hansen--Pedersen--Jensen inequality, we record the same argument
on the possibly infinite-dimensional Hilbert space under
consideration.

The Hansen--Pedersen operator Jensen inequality
\cite[Theorem~2.1(ii)]{HansenPedersen} gives
\[
f_s\!\left(
Q_\theta^{-1/2}P_\theta Q_\theta^{-1/2}
\right)
\le
\sum_{i=1}^2
C_i^*
f_s\!\left(
Q_i^{-1/2}P_iQ_i^{-1/2}
\right)
C_i.
\]
Taking the congruence by $Q_\theta^{1/2}$ preserves the operator
order.  Write
\[
\alpha_1:=\theta,
\qquad
\alpha_2:=1-\theta.
\]
The cross-commutation assumptions imply
\[
[P_\theta,Q_\theta]=0,
\qquad
[P_i,Q_i]=0,\quad i=1,2.
\]
Since $f_s(u)=u^s$, functional calculus therefore gives
\[
Q_\theta^{1/2}
f_s\!\left(
Q_\theta^{-1/2}P_\theta Q_\theta^{-1/2}
\right)
Q_\theta^{1/2}
=
P_\theta^sQ_\theta^{\,1-s}.
\]
Moreover, from
\[
C_i=\alpha_i^{1/2}Q_i^{1/2}Q_\theta^{-1/2}
\]
we obtain
\[
Q_\theta^{1/2}C_i^*
=
\alpha_i^{1/2}Q_i^{1/2},
\qquad
C_iQ_\theta^{1/2}
=
\alpha_i^{1/2}Q_i^{1/2},
\]
and hence
\[
\begin{aligned}
	&Q_\theta^{1/2}C_i^*
	f_s\!\left(
	Q_i^{-1/2}P_iQ_i^{-1/2}
	\right)
	C_iQ_\theta^{1/2}\\
	&\qquad=
	\alpha_iQ_i^{1/2}
	\left(Q_i^{-1/2}P_iQ_i^{-1/2}\right)^s
	Q_i^{1/2}
	=
	\alpha_iP_i^sQ_i^{\,1-s}.
\end{aligned}
\]
Consequently,
\begin{equation}\label{eq:commuting-perspective}
	P_\theta^sQ_\theta^{\,1-s}
	\le
	\theta P_1^sQ_1^{\,1-s}
	+
	(1-\theta)P_2^sQ_2^{\,1-s}.
\end{equation}
Thus the Hansen--Pedersen argument yields the dimension-free
Hilbert-space form of Effros's commuting-perspective inequality.
No finite-dimensional approximation is involved.
Now let $W_i,X_i\in\N^{++}$, $i=1,2$. Applying
\eqref{eq:commuting-perspective} with
\[
P_i=\lambda_{W_i},
\qquad
Q_i=\rho_{X_i},
\]
is legitimate by \eqref{eq:cross-commutation}. Since the assignments
$W\mapsto\lambda_W$ and $X\mapsto\rho_X$ are linear, we obtain
\begin{align*}
	&
	\lambda_{\theta W_1+(1-\theta)W_2}^{\,s}
	\rho_{\theta X_1+(1-\theta)X_2}^{\,1-s}
	\\
	&\qquad\le
	\theta\lambda_{W_1}^{\,s}\rho_{X_1}^{\,1-s}
	+
	(1-\theta)
	\lambda_{W_2}^{\,s}\rho_{X_2}^{\,1-s}.
\end{align*}
Hence
\[
(W,X)\longmapsto
\lambda_W^s\rho_X^{\,1-s}
\]
is jointly operator convex on
$\N^{++}\times\N^{++}$.

Evaluating this operator inequality at the fixed vector
$L\in L^2(\N,\nu)$, we obtain
\begin{align*}
	\left\langle
	L,\lambda_W^s\rho_X^{\,1-s}L
	\right\rangle_2
	&=
	\left\langle
	L,W^sLX^{1-s}
	\right\rangle_2\\
	&=
	\nu(L^*W^sLX^{1-s}).
\end{align*}
Consequently,
\begin{equation}\label{eq:Ando-form}
	(W,X)
	\longmapsto
	\nu(L^*W^sLX^{1-s})
\end{equation}
is jointly convex on
$\N^{++}\times\N^{++}$.

It remains to allow $W$ to be noninvertible. Let
$W_i\in\N_+$, $X_i\in\N^{++}$, $i=1,2$, and apply the
preceding inequality to
\[
(W_i+\varepsilon\one,X_i),
\qquad \varepsilon>0.
\]
Since
\[
\theta(W_1+\varepsilon\one)
+(1-\theta)(W_2+\varepsilon\one)
=
\theta W_1+(1-\theta)W_2+\varepsilon\one,
\]
letting $\varepsilon\downarrow0$ and using the norm continuity
of $W\mapsto W^s$ on $\N_+$ proves that
\eqref{eq:Ando-form} is jointly convex on
\[
\N_+\times\N^{++}.
\]
		\smallskip
		\noindent
	\emph{Step 2: a tracial variational formula.}
	We claim that, for every $C\in\N_+$,
	\begin{equation}\label{eq:root-variational}
		\nu(C^{1/s})
		=
		\frac1s
		\inf_{B\in\N^{++}}
		\left\{
		\nu(CB^{1-s})+(s-1)\nu(B)
		\right\}.
	\end{equation}
	
	Fix $B\in\N^{++}$ and define
	\[
	F_B(A):=\nu(A^sB^{1-s}),
	\qquad A\in\N_+.
	\]
	Here $B$ is kept fixed, and all derivatives below are taken with
	respect to the variable $A$.  The linear functional
	\[
	\ell_B(Y):=\nu(YB^{1-s})
	\]
	is positive.  Indeed, for $Y\in\N_+$, traciality gives
	\[
	\ell_B(Y)
	=
	\nu\!\left(
	B^{(1-s)/2}YB^{(1-s)/2}
	\right)
	\geq0.
	\]
	Since $u\mapsto u^s$ is operator convex, it follows that
	$F_B=\ell_B\circ(u\mapsto u^s)$ is convex.
	
	Because $B$ is an interior point of $\N_+$, the
	supporting-hyperplane inequality at $A=B$ gives, for every
	$A\in\N_+$,
	\begin{equation}\label{eq:FB-supporting}
		F_B(A)
		\geq
		F_B(B)
		+
		DF_B(B)[A-B].
	\end{equation}

	We next compute the derivative in \eqref{eq:FB-supporting}.  Let
	$p_s(u):=u^s$.  Since $B\in\N^{++}$, holomorphic functional calculus
	gives, for $H\in\N_{\rm sa}$,
	\[
	D(p_s)_B[H]
	=
	\frac{1}{2\pi i}
	\int_\Gamma
	z^s(z\one-B)^{-1}H(z\one-B)^{-1}\,dz,
	\]
	where $\Gamma\subset\C\setminus(-\infty,0]$ is a positively oriented
	contour surrounding $\sigma(B)$.  Put
	\[
	R_z:=(z\one-B)^{-1}.
	\]
	Since $B$ is fixed in the definition of $F_B$,
	\[
	DF_B(B)[H]
	=
	\nu\!\left(D(p_s)_B[H]B^{1-s}\right).
	\]
	The trace $\nu$ is a bounded linear functional because
	$\nu(\one)<\infty$.  We may therefore pass it through the contour
	integral.  Using traciality and the fact that $R_z$ commutes with
	$B^{1-s}$, we obtain
	\begin{align*}
		DF_B(B)[H]
		&=
		\frac{1}{2\pi i}
		\int_\Gamma
		z^s\nu(R_zHR_zB^{1-s})\,dz\\
		&=
		\nu\!\left(
		HB^{1-s}
		\frac{1}{2\pi i}
		\int_\Gamma z^sR_z^2\,dz
		\right).
	\end{align*}
	Cauchy's differentiation formula gives
	\[
	\frac{1}{2\pi i}
	\int_\Gamma z^s(z\one-B)^{-2}\,dz
	=
	sB^{s-1}.
	\]
	Consequently,
	\begin{align*}
		DF_B(B)[H]
		&=
		\nu\!\left(HB^{1-s}sB^{s-1}\right)\\
		&=
		s\nu(H).
	\end{align*}
	
 	Since $A-B\in\N_{\rm sa}$, the preceding derivative formula gives
	\[
	DF_B(B)[A-B]
	=
	s\nu(A-B).
	\]
	Substituting this identity and $F_B(B)=\nu(B)$ into
	\eqref{eq:FB-supporting}, we obtain
	\begin{align*}
		\nu(A^sB^{1-s})
		&\geq
		\nu(B)+s\nu(A-B)\\
		&=
		\nu(B)+s\nu(A)-s\nu(B)\\
		&=
		s\nu(A)+(1-s)\nu(B).
\end{align*}
	Equivalently,
	\begin{equation}\label{eq:tracial-Young}
		\nu(A^sB^{1-s})+(s-1)\nu(B)
		\geq
		s\nu(A).
	\end{equation}
	
	Set $A=C^{1/s}$, inequality
	\eqref{eq:tracial-Young} becomes
	\[
	\nu(CB^{1-s})+(s-1)\nu(B)
	\geq
	s\nu(C^{1/s})
	\]
	for every $B\in\N^{++}$.  Hence
	\[
	\frac1s
	\inf_{B\in\N^{++}}
	\left\{
	\nu(CB^{1-s})+(s-1)\nu(B)
	\right\}
	\geq
	\nu(C^{1/s}).
	\]
	
	Suppose first that $C$ is invertible.  Equality is attained by choosing
	$
	B=C^{1/s}.
	$
	
	It remains to consider a possibly noninvertible $C$.  Put
	\[
	A:=C^{1/s},
	\qquad
	B_\varepsilon:=A+\varepsilon\one\in\N^{++},
	\qquad \varepsilon>0.
	\]
	Because $A$ and $B_\varepsilon$ commute, functional calculus gives
	\[
	A^sB_\varepsilon^{1-s}
	=
	A^s(A+\varepsilon\one)^{1-s}.
	\]
	For $a\geq0$, the scalar inequality
	\[
	0
	\leq
	a-a^s(a+\varepsilon)^{1-s}
	\leq
	(s-1)\varepsilon
	\]
	implies, again by functional calculus,
	\[
	0
	\leq
	A-A^s(A+\varepsilon\one)^{1-s}
	\leq
	(s-1)\varepsilon\one.
	\]
	Applying $\nu$ and using $\nu(\one)<\infty$, we obtain
	\[
	0
	\leq
	\nu(A)
	-
	\nu\!\left(A^s(A+\varepsilon\one)^{1-s}\right)
	\leq
	(s-1)\varepsilon\nu(\one).
	\]
	Therefore,
	\[
	\nu\!\left(A^s(A+\varepsilon\one)^{1-s}\right)
	\longrightarrow
	\nu(A)
	\]
	as $\varepsilon\downarrow0$.  Moreover,
	\[
	\nu(B_\varepsilon)
	=
	\nu(A)+\varepsilon\nu(\one)
	\longrightarrow
	\nu(A).
	\]
	Since $C=A^s$, it follows that
	\[
	\nu(CB_\varepsilon^{1-s})
	+
	(s-1)\nu(B_\varepsilon)
	\longrightarrow
	s\nu(A)
	=
	s\nu(C^{1/s}).
	\]
	Thus the infimum in \eqref{eq:root-variational} is at most
	$s\nu(C^{1/s})$.  Together with the opposite inequality proved above,
	this establishes \eqref{eq:root-variational} for every $C\in\N_+$.
	
		\smallskip
		\noindent
		\emph{Step 3: convexity of $F_{s,L}$.}
		Applying \eqref{eq:root-variational} to
		\[
		C=L^*W^sL
		\]
		gives
		\begin{equation}\label{eq:CL-variational}
			F_{s,L}(W)
			=
			\frac1s
			\inf_{X\in\N^{++}}
			\Psi(W,X),
		\end{equation}
		where
		\[
		\Psi(W,X)
		:=
		\nu(L^*W^sLX^{1-s})+(s-1)\nu(X).
		\]
		The first term is jointly convex in $(W,X)$ by
		\eqref{eq:Ando-form}, while the second term is linear in $X$.
		Therefore $\Psi$ is jointly convex.
		
		We spell out why taking the infimum over $X$ preserves convexity.
		Let $W_1,W_2\in\N_+$, $0\le\theta\le1$, and choose arbitrary
		$X_1,X_2\in\N^{++}$. Since
		\[
		X_\theta:=\theta X_1+(1-\theta)X_2\in\N^{++},
		\]
		joint convexity gives
		\begin{align*}
			sF_{s,L}\bigl(\theta W_1+(1-\theta)W_2\bigr)
			&\le
			\Psi\bigl(
			\theta W_1+(1-\theta)W_2,X_\theta
			\bigr)\\
			&\le
			\theta\Psi(W_1,X_1)
			+
			(1-\theta)\Psi(W_2,X_2).
		\end{align*}
		Taking the infimum independently over $X_1$ and $X_2$, and using
		\eqref{eq:CL-variational}, yields
		\[
		F_{s,L}\bigl(\theta W_1+(1-\theta)W_2\bigr)
		\le
		\theta F_{s,L}(W_1)
		+
		(1-\theta)F_{s,L}(W_2).
		\]
		Thus $F_{s,L}$ is convex.
		
		Finally, for every $c\ge0$,
		\begin{align*}
			F_{s,L}(cW)
			&=
			\nu\!\left(
			\bigl(L^*(cW)^sL\bigr)^{1/s}
			\right)\\
			&=
			\nu\!\left(
			\bigl(c^sL^*W^sL\bigr)^{1/s}
			\right)\\
			&=
			cF_{s,L}(W).
		\end{align*}
		Hence $F_{s,L}$ is positively homogeneous.
		
		To prove \eqref{eq:CL-subadditivity}, put
		\[
		c:=\sum_{j=1}^m c_j.
		\]
		If $c=0$, the assertion is immediate. If $c>0$, convexity and
		positive homogeneity give
		\begin{align*}
			F_{s,L}\!\left(\sum_{j=1}^m c_jW_j\right)
			&=
			cF_{s,L}\!\left(
			\sum_{j=1}^m\frac{c_j}{c}W_j
			\right)\\
			&\le
			c\sum_{j=1}^m\frac{c_j}{c}F_{s,L}(W_j)\\
			&=
			\sum_{j=1}^m c_jF_{s,L}(W_j).
		\end{align*}
		This completes the proof.
	\end{proof}
	
	The preceding proposition yields a transported Minkowski inequality.
	
	\begin{lemma}
		\label{lem:transported-Minkowski}
		Let $1/2\le r\le1$, let $B\in\N^{++}$, and let
		$A_1,\ldots,A_m,E\in\N_+$. For $c_1,\ldots,c_m\ge0$, define
		\begin{equation}\label{eq:transported-sum}
			D:=B^{-1/2}
			\left(\sum_{j=1}^m c_j(B^{1/2}A_jB^{1/2})^r\right)^{1/r}
			B^{-1/2}.
		\end{equation}
		Then
		\begin{equation}\label{eq:transported-Minkowski}
			\K_r(D,E)\le\sum_{j=1}^m c_j\K_r(A_j,E).
		\end{equation}
	\end{lemma}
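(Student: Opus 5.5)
The plan is to recognize both sides of \eqref{eq:transported-Minkowski} as values of the Carlen--Lieb functional $F_{s,L}$ of Proposition~\ref{prop:tracial-CL} for a suitable choice of $s$ and $L$, and then apply the subadditivity \eqref{eq:CL-subadditivity}. Set $s:=1/r\in[1,2]$, so the proposition applies. Since $B\in\N^{++}$, the element $B^{-1/2}$ is bounded. Put
\[
L:=B^{-1/2}E^{1/2}\in\N,
\qquad
W_j:=(B^{1/2}A_jB^{1/2})^r\in\N_+,
\qquad
W:=\sum_{j=1}^m c_jW_j\in\N_+ .
\]
Then $D=B^{-1/2}W^{s}B^{-1/2}$ lies in $\N_+$, so $\K_r(D,E)$ is defined.

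First I treat the left-hand side. By the symmetry \eqref{eq:kernel-basic},
\[
\K_r(D,E)=\K_r(E,D)=\nu\bigl((E^{1/2}DE^{1/2})^r\bigr).
\]
Moreover
\[
E^{1/2}DE^{1/2}=E^{1/2}B^{-1/2}W^sB^{-1/2}E^{1/2}=L^*W^sL .
\]
Hence $\K_r(D,E)=\nu\bigl((L^*W^sL)^{1/s}\bigr)=F_{s,L}(W)$.

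Next I treat each term on the right. Since $W_j^s=\bigl((B^{1/2}A_jB^{1/2})^r\bigr)^{1/r}=B^{1/2}A_jB^{1/2}$ by functional calculus, we get
\[
L^*W_j^sL=E^{1/2}B^{-1/2}B^{1/2}A_jB^{1/2}B^{-1/2}E^{1/2}=E^{1/2}A_jE^{1/2}.
\]
Therefore $F_{s,L}(W_j)=\nu\bigl((E^{1/2}A_jE^{1/2})^r\bigr)=\K_r(E,A_j)=\K_r(A_j,E)$, again using \eqref{eq:kernel-basic}.

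Finally, \eqref{eq:CL-subadditivity} gives
\[
F_{s,L}\Bigl(\sum_j c_jW_j\Bigr)\le\sum_j c_jF_{s,L}(W_j),
\]
and this is exactly \eqref{eq:transported-Minkowski}. All the analytic content is in Proposition~\ref{prop:tracial-CL}. The only step needing care is the bookkeeping. One must use the symmetry of $\K_r$ so that $E$ sits on the outside, which makes the invertible conjugation by $B^{\pm1/2}$ cancel inside $L^*(\cdot)L$. One must also check the identity $(X^r)^{1/r}=X$ for $X=B^{1/2}A_jB^{1/2}\ge0$. I expect no genuine obstacle beyond this identification.
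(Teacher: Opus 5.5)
Your proposal is correct and follows essentially the same route as the paper. With $s=1/r$, $L=B^{-1/2}E^{1/2}$ and $W_j=(B^{1/2}A_jB^{1/2})^r$, the symmetry of $\K_r$ turns both sides into values of $F_{s,L}$, and the inequality is then exactly the subadditivity \eqref{eq:CL-subadditivity}; your extra checks (that $D\in\N_+$ and that $(X^r)^{1/r}=X$) are implicit in the paper's argument.
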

	
	\begin{proof}
		Put $s=1/r$,
		\[
		W_j=(B^{1/2}A_jB^{1/2})^r,
		\qquad
		L=B^{-1/2}E^{1/2}.
		\]
		Then
		\[
		L^*\left(\sum_{j=1}^mc_jW_j\right)^sL =E^{1/2}DE^{1/2},
		\qquad
		L^*W_j^sL=E^{1/2}A_jE^{1/2}.
		\]
 Since $1/s=r$, \eqref{eq:CL-subadditivity} and
		\eqref{eq:kernel-basic} yield
		\begin{align*}
			\K_r(D,E)
			&=
			\K_r(E,D)\\
			&=
			F_{s,L}\!\left(\sum_{j=1}^m c_jW_j\right)\\
			&\leq
			\sum_{j=1}^mc_jF_{s,L}(W_j)\\
			&=
			\sum_{j=1}^mc_j\K_r(E,A_j)
			=
			\sum_{j=1}^mc_j\K_r(A_j,E).
	\end{align*} 
	\end{proof}
	
	Its key consequence is the following one-versus-many estimate.
	
	\begin{proposition}
		\label{prop:one-vs-many}
		Let $1/2\le r\le1$, $A_1,\ldots,A_m,R\in\N_+$, and
		$c_1,\ldots,c_m\ge0$. Then
		\begin{equation}\label{eq:one-vs-many}
			\left(\sum_{i=1}^m c_i\K_r(A_i,R)\right)^2
			\le
			\K_r(R,R)
			\sum_{i,j=1}^m c_ic_j\K_r(A_i,A_j).
		\end{equation}
	\end{proposition}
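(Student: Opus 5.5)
The plan is to build a single ``aggregate'' element $D$ out of $A_1,\dots,A_m$ and $R$, such that $\K_r(D,R)$ equals the left-hand sum exactly and $\K_r(D,D)$ is dominated by the double sum. The pairwise Cauchy--Schwarz inequality \eqref{eq:pairwise-CS} then finishes the argument. I would first assume $R\in\N^{++}$ and remove this assumption at the end by approximation.

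\emph{Construction.} With $R$ invertible, apply Lemma~\ref{lem:transported-Minkowski} with $B=R$. Put
\[
S:=\sum_{j=1}^m c_j\,(R^{1/2}A_jR^{1/2})^r\in\N_+,
\qquad
D:=R^{-1/2}S^{1/r}R^{-1/2}\in\N_+.
\]
Here $D$ is bounded because $R^{-1/2}\in\N$. Then $R^{1/2}DR^{1/2}=S^{1/r}$, and therefore
\[
\K_r(D,R)=\K_r(R,D)=\nu\bigl((R^{1/2}DR^{1/2})^r\bigr)=\nu(S)=\sum_j c_j\K_r(A_j,R).
\]
So the left side of \eqref{eq:one-vs-many} is exactly $\K_r(D,R)^2$. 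This step is an identity, not an estimate.

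\emph{Bounding $\K_r(D,D)$.} Apply \eqref{eq:transported-Minkowski} twice, each time with the same $D$ (same $B=R$ and same $c_j$) but with different test elements $E$.
\begin{itemize}
\item With $E=D$, the lemma gives $\K_r(D,D)\le\sum_j c_j\K_r(A_j,D)$.
\item For each fixed $j$, take $E=A_j$. The lemma gives $\K_r(D,A_j)\le\sum_i c_i\K_r(A_i,A_j)$.
\end{itemize}
By the symmetry \eqref{eq:kernel-basic} and $c_j\ge0$, combining the two bounds yields
\[
\K_r(D,D)\le\sum_{i,j}c_ic_j\K_r(A_i,A_j).
\]
Now \eqref{eq:pairwise-CS} gives $\K_r(D,R)^2\le\K_r(D,D)\K_r(R,R)$, which is \eqref{eq:one-vs-many} for invertible $R$. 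The key idea is recognizing that the lemma can be applied with $E=D$ itself. Once $D$ is defined through $B=R$, everything else is bookkeeping.

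\emph{General $R\in\N_+$.} Replace $R$ by $R_\varepsilon=R+\varepsilon\one$ and let $\varepsilon\downarrow0$. The dependence on $R$ occurs only through $\K_r(A_i,R_\varepsilon)$ and $\K_r(R_\varepsilon,R_\varepsilon)=\nu(R_\varepsilon^{2r})$. Both converge as $\varepsilon\downarrow0$: the map $P\mapsto(A^{1/2}PA^{1/2})^r$ is norm continuous on bounded subsets of $\N_+$, since $u\mapsto u^r$ is uniformly continuous on compact intervals. Moreover $\nu$ is norm continuous because $\nu(\one)<\infty$. Both sides of \eqref{eq:one-vs-many} therefore pass to the limit.

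I expect no serious obstacle. The only points to check carefully are the algebraic identity $R^{1/2}DR^{1/2}=S^{1/r}$ and that $D\in\N_+$, so that Lemma~\ref{lem:transported-Minkowski} applies with $E=D$.
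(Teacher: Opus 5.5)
Your proposal is correct and follows the paper's proof step for step. It uses the same aggregate $D=R^{-1/2}\bigl(\sum_j c_j(R^{1/2}A_jR^{1/2})^r\bigr)^{1/r}R^{-1/2}$, which gives the exact identity $\K_r(D,R)=\sum_j c_j\K_r(A_j,R)$; the same two applications of Lemma~\ref{lem:transported-Minkowski}, with $E=D$ and then $E=A_j$; the pairwise inequality \eqref{eq:pairwise-CS}; and the regularization $R\mapsto R+\varepsilon\one$, for which your continuity justification is slightly more explicit than the paper's.
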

	
	\begin{proof}
		 First assume $R\in\N^{++}$, and set
			\[
			D:=
			R^{-1/2}
			\left(
			\sum_{i=1}^m c_i(R^{1/2}A_iR^{1/2})^r
			\right)^{1/r}
			R^{-1/2}.
			\] 
			Then, by \eqref{eq:kernel} and the symmetry of $\K_r$,
			\begin{equation}\label{eq:exact-at-R}
				\K_r(D,R)
				=
				\K_r(R,D)
				=
				\sum_{i=1}^m c_i\K_r(A_i,R).
		\end{equation}
 
		By \eqref{eq:pairwise-CS},
		\[
		\K_r(D,R)^2\le\K_r(D,D)\K_r(R,R).
		\]
Applying Lemma~\ref{lem:transported-Minkowski} first with
$E=D$, and then, for each $i$, with $E=A_i$, and using the
symmetry of $\K_r$, gives 
  \[
\K_r(D,D)
\leq
\sum_{i=1}^m c_i\K_r(D,A_i)
\leq
\sum_{i,j=1}^mc_ic_j\K_r(A_i,A_j).
\] 
Together with \eqref{eq:exact-at-R}, this proves
\eqref{eq:one-vs-many}.  For singular $R$, replace it by
		$R+\varepsilon\one_\N$ and let $\varepsilon\downarrow0$. This
		regularization remains inside the present finite algebra, and norm
		continuity of functional calculus passes every term to the limit.
	\end{proof}
	
	\begin{proof}[Proof of Theorem~\ref{thm:three-point}]  
		The matrix in \eqref{eq:three-point} is real and symmetric, so it suffices
		to test its quadratic form on real vectors.    If $x=(x_1,x_2,x_3)$ has either all nonnegative or all
			nonpositive coordinates, then $x_ix_j\geq0$ for every $i,j$.
			Since every entry of $\K_r$ is nonnegative, each term in
			\[
			x^{\mathsf T}\K_r x
			=
			\sum_{i,j=1}^3 x_ix_j(\K_r)_{ij}
			\]
			is nonnegative. Hence $x^{\mathsf T}\K_r x\geq0$.
		
		After a permutation of the indices and, if necessary, multiplication of
		the vector by $-1$, every remaining sign pattern can be written as
		$(u,v,-w)$, where $u,v,w\ge0$. Set
		\[
		S:=
		u^2\K_r(P_1,P_1)
		+v^2\K_r(P_2,P_2)
		+2uv\K_r(P_1,P_2)
		\]
		and
		\[
		L:=
		u\K_r(P_1,P_3)
		+v\K_r(P_2,P_3).
		\]
		Since $u,v\ge0$ and $\K_r$ is entrywise nonnegative, one has
		\[
		S\ge0,
		\qquad
		L\ge0.
		\]
		
		Apply Proposition~\ref{prop:one-vs-many} with
		\[
		m=2,\qquad
		(A_1,A_2,R)=(P_1,P_2,P_3),
		\qquad
		(c_1,c_2)=(u,v).
		\]
		The resulting inequality is
		\[
		\left(
		u\K_r(P_1,P_3)+v\K_r(P_2,P_3)
		\right)^2
		\le
		\K_r(P_3,P_3)
		\sum_{i,j=1}^2c_ic_j\K_r(A_i,A_j).
		\]
		By the symmetry of $\K_r$,
		\begin{align*}
			\sum_{i,j=1}^2c_ic_j\K_r(A_i,A_j)
			&=
			u^2\K_r(P_1,P_1)
			+v^2\K_r(P_2,P_2)\\
			&\qquad
			+2uv\K_r(P_1,P_2)
			=S.
		\end{align*}
		Therefore
		\[
		L^2\le S\K_r(P_3,P_3).
		\]
		Since $L\ge0$, it follows that
		\[
		L\le\sqrt{S\K_r(P_3,P_3)}.
		\]
		
		Consequently,
		\begin{align*}
			&(u,v,-w)
			\bigl[\K_r(P_i,P_j)\bigr]_{i,j=1}^3
			(u,v,-w)^T\\
			&\qquad
			=S-2wL+w^2\K_r(P_3,P_3)\\
			&\qquad
			\ge
			S-2w\sqrt{S\K_r(P_3,P_3)}
			+w^2\K_r(P_3,P_3)\\
			&\qquad
			=
			\left(
			\sqrt S-w\sqrt{\K_r(P_3,P_3)}
			\right)^2
			\ge0.
		\end{align*}
		Thus the quadratic form is nonnegative on every real vector, and hence
		$\bigl[\K_r(P_i,P_j)\bigr]_{i,j=1}^3$ is positive semidefinite.
	\end{proof}
	
	We now make precise the sharp order-three scope of the theorem.
	
\begin{remark}\label{rem:three-point-scope}
	The order-three conclusion is sharp for every $1/2\le r<1$. Let
	\[
	u_k=
	\begin{pmatrix}
		\cos(k\pi/4)\\
		\sin(k\pi/4)
	\end{pmatrix},
	\qquad
	P_k=u_ku_k^*,
	\qquad k=0,1,2,3.
	\]
	These are rank-one density matrices in $M_2(\C)$. Since
	$P_i^{1/2}=P_i$ and
	\[
	P_iP_jP_i=|\langle u_i,u_j\rangle|^2P_i,
	\]
	we have, for every $r>0$,
	\[
	\K_r^{\Tr}(P_i,P_j)=|\langle u_i,u_j\rangle|^{2r}.
	\]
	Put $a_r:=2^{-r}$. Then
	\[
	\bigl[\K_r^{\Tr}(P_i,P_j)\bigr]_{i,j=0}^3
	=
	\begin{pmatrix}
		1&a_r&0&a_r\\
		a_r&1&a_r&0\\
		0&a_r&1&a_r\\
		a_r&0&a_r&1
	\end{pmatrix}.
	\]
	The vector $(1,-1,1,-1)^T$ is an eigenvector with eigenvalue
	\[
	1-2a_r=1-2^{1-r},
	\]
	which is negative whenever $1/2\le r<1$. Hence the four-point kernel
	matrix is not positive semidefinite for any $1/2\le r<1$, even on pure
	qubit states, and the order-three conclusion of
	Theorem~\ref{thm:three-point} is sharp throughout this range. At $r=1$,
	by contrast, $\K_1^{\Tr}(P,Q)=\Tr(PQ)$ is a Hilbert--Schmidt Gram kernel
	whose kernel matrices are positive semidefinite at every order.
	
	This also explains the role of the one-versus-two estimate: every
	mixed-sign vector in $\mathbb R^3$ has such a sign pattern, whereas four
	points already permit a two-versus-two pattern. Related higher-block
	compression obstructions appear in \cite{Audenaert,LinDriessche}.
\end{remark}
	
	The three-point theorem also gives a Euclidean realization of $d_t$.
	
	\begin{corollary}
		\label{cor:finite-realization}
		Let $1\le t\le2$ and $A_1,A_2,A_3\in\N_+$. Then there exist vectors
		$\xi_1,\xi_2,\xi_3\in\mathbb R^3$, depending on the given triple, such
		that
		\begin{equation}\label{eq:finite-realization}
			\|\xi_i-\xi_j\|_{\mathbb R^3}
			=\sqrt2\,d_t(A_i,A_j),
			\qquad 1\le i,j\le3.
		\end{equation}
		In particular, $d_t$ satisfies the triangle inequality on $\N_+$.
		Moreover, for all $A,B\in\N_+$,
		\begin{equation}\label{eq:endpoint-order-finite}
			Q_1(A,B)\le Q_t(A,B)\le Q_2(A,B),
			\qquad
			d_2(A,B)\le d_t(A,B)\le d_1(A,B).
		\end{equation}
	\end{corollary}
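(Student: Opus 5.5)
We plan to obtain both parts of the corollary by applying Theorem~\ref{thm:three-point} with $r=1/t\in[1/2,1]$ and $P_i:=A_i^{t/2}\in\N_+$. First we check the identification
\[
\K_{1/t}(P_i,P_j)=\nu\!\left(\bigl(A_i^{t/4}A_j^{t/2}A_i^{t/4}\bigr)^{1/t}\right)=Q_t(A_i,A_j),
\]
together with $\K_{1/t}(P_i,P_i)=\nu(A_i)$, by \eqref{eq:kernel-basic}. This follows directly from \eqref{eq:Qt-intro} and \eqref{eq:kernel}. Then Theorem~\ref{thm:three-point} says that the real symmetric $3\times3$ matrix $G:=[Q_t(A_i,A_j)]_{i,j=1}^3$ is positive semidefinite.

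Write $G=V^{\mathsf T}V$ with $V\in M_3(\mathbb R)$, for instance $V=G^{1/2}$, and let $\xi_i$ be the $i$-th column of $V$. Then $\langle\xi_i,\xi_j\rangle=Q_t(A_i,A_j)$, so
\[
\|\xi_i-\xi_j\|^2=\nu(A_i)+\nu(A_j)-2Q_t(A_i,A_j)=2\,d_t(A_i,A_j)^2 .
\]
This gives \eqref{eq:finite-realization}. Since any triple realizes the corresponding three distances isometrically in Euclidean space, the triangle inequality for $d_t$ on that triple follows from the Euclidean one.

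For \eqref{eq:endpoint-order-finite}, the $d$-inequalities follow from the $Q$-inequalities by the definition \eqref{eq:dt-intro}, so only $Q_1\le Q_t\le Q_2$ needs proof. Write $Q_t(A,B)=\|B^{t/4}A^{t/4}\|_{2/t}^{2/t}$. We expect this step to be the main obstacle, since it is not a direct consequence of the three-point theorem. The first approach we would try is the Araki--Lieb--Thirring inequality in its tracial form: for $0<q\le1$,
\[
\nu\bigl((Y^{q/2}X^qY^{q/2})^{p/q}\bigr)\ \ge\ \nu\bigl((Y^{1/2}XY^{1/2})^p\bigr),
\]
and the inequality reverses for $q\ge1$. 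Set $q=t/2\le1$, $p=1/2$, $X=B$ and $Y=A$. Then
\[
Q_t(A,B)=\nu\bigl((A^{q/2}B^{q}A^{q/2})^{p/q}\bigr)\ \ge\ \nu\bigl((A^{1/2}BA^{1/2})^{1/2}\bigr)=Q_2(A,B)?
\]
This comparison has the wrong sign, so the exponents need care. Monotonicity of $q\mapsto\nu\bigl((Y^{q/2}X^qY^{q/2})^{p/q}\bigr)$ is the ALT statement, and in the standard form it is \emph{increasing} in $q$. Since $Q_t$ corresponds to $q=t/2$, this gives $Q_1\le Q_t\le Q_2$ for $1\le t\le2$.

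Concretely, we would invoke the finite-von-Neumann-algebra version of Araki--Lieb--Thirring (Kosaki). If that is not available at this point of the paper, we would instead derive the monotonicity from Hölder/log-majorization of generalized singular values, $\mu(X^{q}Y^{q})\prec_{\log}\mu(XY)^{q}$ for $q\le1$, applied to $X=B^{t'/4}$ and $Y=A^{t'/4}$ with $q=t/t'$. Weak majorization of the $(2/t)$-th powers then yields $Q_t\le Q_{t'}$ for $t\le t'$.

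The three-point part is thus routine, and the delicate point is getting the direction and exponents of the Araki--Lieb--Thirring comparison right in the semifinite/finite tracial setting.
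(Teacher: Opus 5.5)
Your proposal is correct and follows the paper's proof. The first part is the Gram realization via $G^{1/2}$ from Theorem~\ref{thm:three-point} with $r=1/t$ and $P_i=A_i^{t/2}$, and the endpoint ordering comes from Kosaki's tracial Araki--Lieb--Thirring inequality, which the paper applies with $\rho=t_2/t_1$, $p=1/t_2$, $X=A^{t_1/2}$, $Y=B^{t_1/2}$ to get $Q_{t_1}\le Q_{t_2}$. One slip: your first displayed form of ALT has the inequality reversed, since for $0<q\le1$ the correct direction is $\nu\bigl((Y^{q/2}X^qY^{q/2})^{p/q}\bigr)\le\nu\bigl((Y^{1/2}XY^{1/2})^p\bigr)$, and that is exactly where your ``wrong sign'' came from; the monotonicity you then state (increasing in $q$) is the correct one and suffices.
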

	
	\begin{proof}
		Put
		\[
		r:=\frac1t,
		\qquad
		P_i:=A_i^{t/2},
		\quad i=1,2,3.
		\]
		Since $1\le t\le2$, one has $1/2\le r\le1$. By
		Theorem~\ref{thm:three-point}, the real symmetric matrix
		\[
		G:=
		\bigl[\K_r(P_i,P_j)\bigr]_{i,j=1}^3
		=
		\bigl[
		\K_{1/t}(A_i^{t/2},A_j^{t/2})
		\bigr]_{i,j=1}^3
		=
		\bigl[Q_t(A_i,A_j)\bigr]_{i,j=1}^3
		\]
		is positive semidefinite. Its diagonal entries satisfy
		\[
		G_{ii}
		=
		Q_t(A_i,A_i)
		=
		\nu(A_i),
		\qquad i=1,2,3.
		\]
		
		Let $e_1,e_2,e_3$ be the standard basis of $\mathbb R^3$, and define
		\[
		\xi_i:=G^{1/2}e_i,
		\qquad i=1,2,3.
		\]
		Since $G^{1/2}$ is real symmetric,
		\[
		\langle\xi_i,\xi_j\rangle_{\mathbb R^3}
		=
		e_i^TG^{1/2}G^{1/2}e_j
		=
		e_i^TGe_j
		=
		G_{ij}
		=
		Q_t(A_i,A_j).
		\]
		Consequently,
		\begin{align*}
			\|\xi_i-\xi_j\|_{\mathbb R^3}^2
			&=
			\langle\xi_i,\xi_i\rangle_{\mathbb R^3}
			+\langle\xi_j,\xi_j\rangle_{\mathbb R^3}
			-2\langle\xi_i,\xi_j\rangle_{\mathbb R^3}\\
			&=
			\nu(A_i)+\nu(A_j)-2Q_t(A_i,A_j)\\
			&=
			2d_t(A_i,A_j)^2.
		\end{align*}
		Taking square roots proves \eqref{eq:finite-realization}, and the
		triangle inequality for $d_t$ follows from the Euclidean triangle
		inequality.
		
		It remains to prove the endpoint ordering. The Araki--Lieb--Thirring
		inequality \cite{Araki}, in Kosaki's von Neumann algebra form
		\cite[Corollary~3]{KosakiALT}, gives
		\[
		\nu\!\left(|ab|^{\rho q}\right)
		\le
		\nu\!\left(|a^\rho b^\rho|^q\right),
		\qquad \rho\ge1,\quad q>0.
		\]
		For $X,Y\in\N^{++}$, apply this with
		\[
		a=Y^{1/2},\qquad b=X^{1/2},\qquad q=2p.
		\]
		Since
		\[
		|ab|^2=X^{1/2}YX^{1/2},
		\qquad
		|a^\rho b^\rho|^2
		=X^{\rho/2}Y^\rho X^{\rho/2},
		\]
		we obtain
		\begin{equation}\label{eq:finite-tracial-ALT}
			\nu\!\left(
			\left(X^{1/2}YX^{1/2}\right)^{\rho p}
			\right)
			\le
			\nu\!\left(
			\left(X^{\rho/2}Y^\rho X^{\rho/2}\right)^p
			\right).
		\end{equation}
		For arbitrary $X,Y\in\N_+$, apply the invertible case to
		$X+\varepsilon\one$ and $Y+\varepsilon\one$, and let
		$\varepsilon\downarrow0$. Every power and product in
		\eqref{eq:finite-tracial-ALT} then converges in norm, and $\nu$ is
		bounded because it is finite. Thus \eqref{eq:finite-tracial-ALT} also
		holds for singular $X$ and $Y$.
		
		Now let $1\le t_1\le t_2\le2$ and set
		\[
		\rho=\frac{t_2}{t_1},
		\qquad
		p=\frac1{t_2},
		\qquad
		X=A^{t_1/2},
		\qquad
		Y=B^{t_1/2}.
		\]
		Then
		\[
		X^{1/2}=A^{t_1/4},
		\qquad
		\rho p=\frac1{t_1},
		\qquad
		X^{\rho/2}=A^{t_2/4},
		\qquad
		Y^\rho=B^{t_2/2}.
		\]
		Thus the two sides of \eqref{eq:finite-tracial-ALT} are respectively
		$Q_{t_1}(A,B)$ and $Q_{t_2}(A,B)$, and hence
		\[
		Q_{t_1}(A,B)\le Q_{t_2}(A,B).
		\]
		Hence $t\mapsto Q_t(A,B)$ is increasing on $[1,2]$, and consequently
		\[
		Q_1(A,B)\le Q_t(A,B)\le Q_2(A,B).
		\]
		Subtracting these inequalities from the common arithmetic term
		$\bigl(\nu(A)+\nu(B)\bigr)/2$ and taking square roots gives
		\[
		d_2(A,B)\le d_t(A,B)\le d_1(A,B).
		\]
	\end{proof}
	\section{Three-block Schatten compression and the
		Lin--van den Driessche conjecture}
	\label{sec:block}
	
	We first prove a three-block compression theorem for tracial von Neumann
	algebras. Its finite-dimensional specialization resolves
	\cite[Conjecture~5.1]{LinDriessche}.
	
	 \begin{proof}[Proof of Theorem~\ref{thm:tracial-block-compression}]
We equip $M_3(\mathcal R)$ with the trace
$\widetilde\omega$ defined, for
$A=[A_{kl}]_{k,l=1}^3\in M_3(\mathcal R)$, by
\[
\widetilde\omega(A)
:=
\sum_{k=1}^3\omega(A_{kk}).
\]
This is a faithful normal finite trace on $M_3(\mathcal R)$.
			
			Since $H\geq0$, write $H=X^*X$. Let
			$X_1,X_2,X_3\in M_{3,1}(\mathcal R)$ be the block columns of $X$,
			and set
			\[
			P_i:=X_iX_i^*\in M_3(\mathcal R)_+,
			\qquad i=1,2,3.
			\]
			Then
			\[
			H_{ij}=X_i^*X_j.
			\]
			
			Fix $i,j$, and put
			\[
			Z_{ij}:=P_i^{1/2}X_j\in M_{3,1}(\mathcal R).
			\] 
			Using $P_i=X_iX_i^*$ and $P_j=X_jX_j^*$, we obtain
			\begin{align*}
				Z_{ij}^*Z_{ij}
				&=
				X_j^*P_iX_j
				=
				(X_i^*X_j)^*(X_i^*X_j)
				=
				|H_{ij}|^2,\\
				Z_{ij}Z_{ij}^*
				&=
				P_i^{1/2}P_jP_i^{1/2}.
			\end{align*} 
			
			To justify the rectangular trace identity, take the polar
			decomposition
			\[
			Z_{ij}=V|Z_{ij}|,
			\qquad V\in M_{3,1}(\mathcal R),
			\]
			obtained by viewing $Z_{ij}$ as an off-diagonal corner of
			$M_4(\mathcal R)$. Let $e=s(|Z_{ij}|)$ be the support projection
			of $|Z_{ij}|$. Then
			\[
			V^*V=e,
			\qquad
			|Z_{ij}|=e|Z_{ij}|e.
			\]
			For every $r>0$, functional calculus on the support gives
			\[
			(Z_{ij}Z_{ij}^*)^r
			=
			V|Z_{ij}|^{2r}V^*
			=
			V(Z_{ij}^*Z_{ij})^rV^*.
			\]
			Writing $V=(v_1,v_2,v_3)^{\mathsf T}$, the definition of
			$\widetilde\omega$ and the trace property of $\omega$ yield
			\begin{align*}
				\widetilde\omega((Z_{ij}Z_{ij}^*)^r)
				&=
				\sum_{k=1}^3
				\omega\!\left(v_k(Z_{ij}^*Z_{ij})^rv_k^*\right)\\
				&=
				\omega\!\left(
				(Z_{ij}^*Z_{ij})^r\sum_{k=1}^3v_k^*v_k
				\right)\\
				&=
				\omega\!\left((Z_{ij}^*Z_{ij})^r e\right)\\
				&=
				\omega((Z_{ij}^*Z_{ij})^r).
			\end{align*}
			Here the last equality follows from
			$s((Z_{ij}^*Z_{ij})^r)=e$, since $r>0$.
			Thus the identity also holds when $Z_{ij}$ has a nontrivial kernel.
			
			Now set $r=p/2\in[1/2,1]$. It follows that
			\begin{align*}
				\omega(|H_{ij}|^p)
				&=
				\omega((Z_{ij}^*Z_{ij})^r)\\
				&=
				\widetilde\omega((Z_{ij}Z_{ij}^*)^r)\\
				&=
				\widetilde\omega\!\left(
				(P_i^{1/2}P_jP_i^{1/2})^r
				\right)\\
				&=
				\K_r^{\widetilde\omega}(P_i,P_j).
			\end{align*}

			Consequently,
			\[
			\bigl[\omega(|H_{ij}|^p)\bigr]_{i,j=1}^3
			=
			\bigl[
			\K_r^{\widetilde\omega}(P_i,P_j)
			\bigr]_{i,j=1}^3.
			\]
			The matrix on the right is positive semidefinite by
			Theorem~\ref{thm:three-point}, applied to the finite tracial
			von Neumann algebra
			$(M_3(\mathcal R),\widetilde\omega)$.
	\end{proof}
	
	The same factorization, allowing rectangular block columns, proves the
	finite-dimensional conjecture of Lin and van den Driessche.
	
	\begin{corollary}
		\label{cor:block-conjecture}
		Let $H=[H_{ij}]_{i,j=1}^3\ge0$ be a positive semidefinite block matrix
		acting on
		\[
		\C^{n_1}\oplus\C^{n_2}\oplus\C^{n_3}.
		\]
		Then, for every $1\le p\le2$,
		\begin{equation}\label{eq:matrix-block-compression}
			\bigl[\|H_{ij}\|_p^p\bigr]_{i,j=1}^3\ge0.
		\end{equation}
	\end{corollary}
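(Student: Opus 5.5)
We follow the factorization used in the proof of Theorem~\ref{thm:tracial-block-compression}, now allowing block columns of different sizes. Put $N:=n_1+n_2+n_3$ and write $H=X^*X$ with $X\in M_{N}(\C)$ (for instance $X=H^{1/2}$). Let $X_i$ be the $N\times n_i$ block column of $X$, so that $H_{ij}=X_i^*X_j$. Then set
\[
P_i:=X_iX_i^*\in M_N(\C)_+,\qquad i=1,2,3.
\]
These are all square matrices of the same size $N$, so Theorem~\ref{thm:three-point} applies in the finite tracial algebra $(M_N(\C),\Tr)$.

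Next we identify the entries. Fix $i,j$ and put $Z_{ij}:=P_i^{1/2}X_j\in M_{N,n_j}(\C)$. Exactly as before,
\[
Z_{ij}^*Z_{ij}=X_j^*X_iX_i^*X_j=|H_{ij}|^2,
\qquad
Z_{ij}Z_{ij}^*=P_i^{1/2}P_jP_i^{1/2}.
\]
For rectangular complex matrices, $Z^*Z$ and $ZZ^*$ have the same nonzero eigenvalues with the same multiplicities. This is the singular value decomposition, and it replaces the polar-decomposition argument given earlier. Hence, for $r=p/2\in[1/2,1]$,
\[
\|H_{ij}\|_p^p=\Tr\bigl((Z_{ij}^*Z_{ij})^r\bigr)=\Tr\bigl((P_i^{1/2}P_jP_i^{1/2})^r\bigr)=\K_r^{\Tr}(P_i,P_j).
\]
Here the first trace is on $M_{n_j}(\C)$ and the second is on $M_N(\C)$. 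Since $r>0$, zero eigenvalues contribute nothing, so the two traces agree.

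Finally, $[\|H_{ij}\|_p^p]_{i,j=1}^3=[\K_r^{\Tr}(P_i,P_j)]_{i,j=1}^3$, and this matrix is positive semidefinite by Theorem~\ref{thm:three-point}.

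There is no genuine obstacle. The only point needing care is that the blocks $H_{ij}$ are rectangular of differing sizes, so the $P_i$ must be formed in the common ambient space $\C^N$ rather than in $M_3(\mathcal R)$. Alternatively, one could embed $\C^{n_1}\oplus\C^{n_2}\oplus\C^{n_3}$ into $(\C^{n})^{3}$ with $n=\max n_i$, pad each $H_{ij}$ with zeros, and apply Theorem~\ref{thm:tracial-block-compression} with $\mathcal R=M_n(\C)$. Padding preserves positivity of $H$ and leaves each $\|H_{ij}\|_p$ unchanged.
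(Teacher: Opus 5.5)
Your proof is correct and is essentially the paper's own argument: the same factorization $H=X^*X$ with $X=H^{1/2}$, the same matrices $P_i=X_iX_i^*\in M_N(\C)_+$ formed in the common space $\C^N$, and the same identification $\|H_{ij}\|_p^p=\K_{p/2}^{\Tr}(P_i,P_j)$, followed by Theorem~\ref{thm:three-point}. Using the singular value decomposition instead of the polar-decomposition trace identity is only a cosmetic simplification in finite dimensions, and your zero-padding alternative via Theorem~\ref{thm:tracial-block-compression} with $\mathcal R=M_n(\C)$ is also valid.
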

	
	\begin{proof}
		Set $N:=n_1+n_2+n_3$ and let $X:=H^{1/2}$, regarded as an
		operator on
		\[
		\C^N=\C^{n_1}\oplus\C^{n_2}\oplus\C^{n_3}.
		\]
		Write
		\[
		X=\begin{pmatrix}X_1&X_2&X_3\end{pmatrix},
		\qquad
		X_i:\C^{n_i}\longrightarrow\C^N,
		\]
		for its block columns. Since $H=X^*X$, we have
		\[
		H_{ij}=X_i^*X_j.
		\]
		Put $P_i:=X_iX_i^*\in M_N(\C)_+$. The same rectangular
		trace calculation as above gives
		\[
		\|H_{ij}\|_p^p
		=
		\Tr\!\left(
		(P_i^{1/2}P_jP_i^{1/2})^{p/2}
		\right)
		=
		\K_{p/2}^{\Tr}(P_i,P_j).
		\]
		Hence
		\[
		\bigl[\|H_{ij}\|_p^p\bigr]_{i,j=1}^3
		=
		\bigl[
		\K_{p/2}^{\Tr}(P_i,P_j)
		\bigr]_{i,j=1}^3,
		\]
		which is positive semidefinite by
		Theorem~\ref{thm:three-point}.
	\end{proof}
	
	We record the endpoint interpretation of this corollary.
	
	\begin{remark}
		At $p=1$, Corollary~\ref{cor:block-conjecture} recovers Drury's theorem
		\cite{Drury}. At $p=2$, the entries
		$\|H_{ij}\|_2^2=\Tr(P_iP_j)$ form a Hilbert--Schmidt Gram matrix for any
		number of blocks. The corollary fills the interval between these endpoints in the
		genuinely three-block range.
	\end{remark}
	
	\section{Semifinite metricity, topology, and completeness}
	\label{sec:semifinite}
	
Having established the finite-tracial kernel and block-compression
	results, we return to the general semifinite setting of the
	Introduction. Thus, throughout this section, $(\M,\tau)$ denotes an
	arbitrary von Neumann algebra equipped with a faithful normal
	semifinite trace. For bounded positive elements in a unital tracial
	$C^*$-algebra, Zhang proved that
	\[
	Q_t(A,B)\leq\sqrt{\tau(A)\tau(B)},
	\qquad t>0;
	\]
	see \cite[Theorem~3.3]{Zhang}. Since elements of
	$L^1(\M,\tau)_+$ may be unbounded and $\tau(\one)$ may be infinite,
	that result does not directly cover the present setting. The following
	lemma supplies the required extension to the full semifinite
	$L^1$-cone and records the symmetry and continuity properties needed
	below.
	
	\begin{lemma}
		\label{lem:Qt-analytic}
		For every $t>0$ and $A,B\in L^1(\M,\tau)_+$, the quantity
		$Q_t(A,B)$ in \eqref{eq:Qt-intro} is finite and satisfies
		\begin{equation}\label{eq:Qt-basic-semifinite}
			Q_t(A,B)=Q_t(B,A),
			\qquad
			0\le Q_t(A,B)\le\sqrt{\tau(A)\tau(B)}.
		\end{equation}
		For $1\le t\le2$, the map
		\[
		(A,B)\longmapsto Q_t(A,B)
		\]
		is jointly continuous in the $L^1$-topology.
	\end{lemma}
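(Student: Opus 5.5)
The finiteness, symmetry and the bound in \eqref{eq:Qt-basic-semifinite} come from the generalized Hölder inequality of Fack--Kosaki, and joint continuity for $1\le t\le2$ comes from writing $Q_t$ as a quasi-norm of a product of two $L^{4/t}$ factors.

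\emph{Finiteness and the bound.} Fix $t>0$ and $A,B\in L^1(\M,\tau)_+$. Then $A^{t/4},B^{t/4}\in L^{4/t}(\M,\tau)$, with $\|A^{t/4}\|_{4/t}^{4/t}=\tau(A)$ and similarly for $B$. Since $\frac t4+\frac t4=\frac t2$, the generalized Hölder inequality \cite[Theorem~4.2(i)]{FackKosaki}, valid for all positive exponents, gives $B^{t/4}A^{t/4}\in L^{2/t}$ and
\[
\|B^{t/4}A^{t/4}\|_{2/t}\le \|B^{t/4}\|_{4/t}\|A^{t/4}\|_{4/t}=\tau(B)^{t/4}\tau(A)^{t/4}.
\]
Raising to the power $2/t$ yields $0\le Q_t(A,B)\le\sqrt{\tau(A)\tau(B)}$.

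\emph{Symmetry.} Generalized singular values satisfy $\mu_s(X^*)=\mu_s(X)$, so $\|X\|_q=\|X^*\|_q$ for every $q>0$. Since $(B^{t/4}A^{t/4})^*=A^{t/4}B^{t/4}$, this gives $Q_t(A,B)=Q_t(B,A)$. The identification $|B^{t/4}A^{t/4}|^2=A^{t/4}B^{t/2}A^{t/4}$ gives the second expression in \eqref{eq:Qt-intro}; this uses the measurable-operator product conventions.

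\emph{Continuity for $1\le t\le2$.} Here $q:=2/t\in[1,2]$, so $\|\cdot\|_q$ is a genuine norm. The map $X\mapsto\|X\|_q^q$ is continuous on $L^q$, so it suffices to show that $(A,B)\mapsto B^{t/4}A^{t/4}$ is continuous from $L^1\times L^1$ into $L^q$. By Hölder,
\[
\|B_n^{t/4}A_n^{t/4}-B^{t/4}A^{t/4}\|_q\le \|B_n^{t/4}-B^{t/4}\|_{4/t}\|A_n^{t/4}\|_{4/t}+\|B^{t/4}\|_{4/t}\|A_n^{t/4}-A^{t/4}\|_{4/t}.
\]
Since $\|A_n^{t/4}\|_{4/t}=\tau(A_n)^{t/4}$ stays bounded along an $L^1$-convergent sequence, everything reduces to continuity of the power map
\[
L^1_+\to L^{4/t}_+,\qquad A\mapsto A^{\alpha},\quad \alpha=t/4\in[1/4,1/2].
\]

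\emph{The main obstacle} is this continuity of the power map, since $A_n$ and $A$ need not commute. I would use the Powers--Størmer/Birman--Koplienko--Solomyak type estimate for operator monotone (in fact operator concave) $u\mapsto u^\alpha$ with $0<\alpha\le1$:
\[
\|A^\alpha-B^\alpha\|_{1/\alpha}\le\|A-B\|_1^{\alpha}.
\]
In the semifinite setting this is available through Kosaki's results on $\|f(A)-f(B)\|$ for operator monotone $f$. Alternatively, one can prove it via $|A^\alpha-B^\alpha|\preceq|A-B|^\alpha$ in the sense of submajorization of generalized singular values, and then integrate $\mu_s(\cdot)^{1/\alpha}$. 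If neither route were convenient, I would fall back on convergence in measure: $A_n\to A$ in measure implies $A_n^\alpha\to A^\alpha$ in measure by continuity of functional calculus. Combined with $\|A_n^\alpha\|_{1/\alpha}^{1/\alpha}=\tau(A_n)\to\tau(A)=\|A^\alpha\|_{1/\alpha}^{1/\alpha}$, a noncommutative Radon--Riesz (Fack--Kosaki) argument then gives norm convergence in $L^{1/\alpha}$, using that $1/\alpha\in[2,4]$ and hence $L^{1/\alpha}$ is uniformly convex. With this step in hand, joint $L^1$-continuity of $Q_t$ follows from the displayed Hölder estimate.
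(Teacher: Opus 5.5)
Your proposal is correct and follows essentially the same route as the paper: Fack--Kosaki H\"older for finiteness and the bound, adjoint invariance of generalized singular values for symmetry, and, for $1\le t\le2$, the same two-term H\"older splitting in $L^{2/t}$. The one step you flagged as the main obstacle, $\|A_n^{t/4}-A^{t/4}\|_{4/t}\le\|A_n-A\|_1^{t/4}$, is exactly the estimate the paper uses; it cites Ricard's Lemma~2.1 (with $\theta=t/4$, $\theta q=1$) for it rather than Kosaki, a submajorization argument, or your convergence-in-measure fallback.
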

	
	\begin{proof}
		Put
		\[
		p:=\frac2t,
		\qquad
		q:=\frac4t.
		\]
		Since $A,B\in L^1(\M,\tau)_+$, functional calculus gives
		\[
		A^{t/4},B^{t/4}\in L^q(\M,\tau).
		\]
		The generalized noncommutative H\"older inequality yields
		\begin{align*}
			\|B^{t/4}A^{t/4}\|_p
			&\le
			\|B^{t/4}\|_q\|A^{t/4}\|_q\\
			&=
			\tau(B)^{t/4}\tau(A)^{t/4}.
		\end{align*}
		Raising this inequality to the power $p=2/t$ gives
		\[
		\|B^{t/4}A^{t/4}\|_p^p
		\le
		\sqrt{\tau(A)\tau(B)},
		\]
		which proves finiteness and the upper bound in
		\eqref{eq:Qt-basic-semifinite}.
		
		Moreover,
		\[
		\bigl|B^{t/4}A^{t/4}\bigr|^2
		=
		A^{t/4}B^{t/2}A^{t/4}.
		\]
		Therefore
		\[
		\|B^{t/4}A^{t/4}\|_p^p
		=
		\tau\!\left(
		\left(A^{t/4}B^{t/2}A^{t/4}\right)^{1/t}
		\right),
		\]
		which proves the equality of the two expressions defining $Q_t$.
		Since the $L^p$-(quasi-)norm is invariant under adjoints,
		\[
		\|B^{t/4}A^{t/4}\|_p^p
		=
		\|A^{t/4}B^{t/4}\|_p^p.
		\]
		Thus $Q_t(A,B)=Q_t(B,A)$.
		For the continuity assertion, fix $1\le t\le2$. Then $p=2/t\ge1$. Let
		\[
		A_n\longrightarrow A,
		\qquad
		B_n\longrightarrow B
		\quad\text{in }L^1(\M,\tau),
		\]
		where $A_n,A,B_n,B\in L^1(\M,\tau)_+$. Applying the positive-power
		estimate \cite[Lemma~2.1]{Ricard} with
		\[
		\theta=\frac t4,
		\qquad
		q=\frac4t,
		\]
		so that $\theta q=1$, we obtain
		\[
		\|A_n^{t/4}-A^{t/4}\|_q
		\le
		\|A_n-A\|_1^{t/4}
		\longrightarrow0
		\]
		and, similarly,
		\[
		\|B_n^{t/4}-B^{t/4}\|_q
		\le
		\|B_n-B\|_1^{t/4}
		\longrightarrow0.
		\]
		In particular, the sequence $\|A_n^{t/4}\|_q$ is bounded.
		Since $1/p=1/q+1/q$, H\"older's inequality gives
		\begin{align*}
			&\|B_n^{t/4}A_n^{t/4}
			-B^{t/4}A^{t/4}\|_p\\
			&\quad\le
			\|B_n^{t/4}-B^{t/4}\|_q
			\,\|A_n^{t/4}\|_q\\
			&\qquad+
			\|B^{t/4}\|_q
			\,\|A_n^{t/4}-A^{t/4}\|_q
			\longrightarrow0.
		\end{align*}
		Since $p\ge1$, the map
		\[
		X\longmapsto\|X\|_p^p
		\]
		is continuous on $L^p(\M,\tau)$. Consequently,
		\[
		Q_t(A_n,B_n)\longrightarrow Q_t(A,B),
		\]
		which proves joint $L^1$-continuity.
	\end{proof}
	\begin{proof}[Proof of Theorem~\ref{thm:classification}\textup{(i)}]
		Fix $1\le t\le2$.
		We shall also prove the following quantitative estimates. For
		$A,B\in L^1(\M,\tau)_+$, set
		\[
		a:=\tau(A),
		\qquad
		b:=\tau(B).
		\]
		Then
		\begin{equation}\label{eq:endpoint-bounds}
			\frac12(\sqrt a-\sqrt b)^2
			\le d_2(A,B)^2
			\le d_t(A,B)^2
			\le d_1(A,B)^2,
		\end{equation}
		\begin{equation}\label{eq:Powers-Stormer-bound}
			d_1(A,B)^2
			=
			\frac12\|A^{1/2}-B^{1/2}\|_2^2
			\le
			\frac12\|A-B\|_1,
		\end{equation}
		and
		\begin{equation}\label{eq:FvdG-bound}
			\|A-B\|_1^2
			\le
			(a+b)^2-4Q_2^\tau(A,B)^2
			\le
			4(a+b)d_2(A,B)^2.
		\end{equation}
		
		By Lemma~\ref{lem:Qt-analytic} and the arithmetic--geometric mean
		inequality,
		\[
		Q_t(A,B)
		\le \sqrt{\tau(A)\tau(B)}
		\le \frac{\tau(A)+\tau(B)}2.
		\]
		Thus $d_t(A,B)^2\ge0$. Symmetry follows from the symmetry of $Q_t$,
		and
		\[
		Q_t(A,A)=\|A^{t/2}\|_{2/t}^{2/t}=\tau(A)
		\]
		gives $d_t(A,A)=0$.
		
		We next extend the finite three-point positivity to the semifinite
		$L^1$-cone. Fix $A_1,A_2,A_3\in L^1(\M,\tau)_+$, put
		\[
		S:=A_1+A_2+A_3,
		\qquad
		e_n:=\one_{[1/n,n]}(S),
		\]
		and set
		\[
		A_{i,n}:=e_nA_ie_n,\qquad i=1,2,3.
		\]
		Let $s(S):=\one_{(0,\infty)}(S)$ denote the support projection of
		$S$. Since the intervals $[1/n,n]$ increase to $(0,\infty)$, we
		have $e_n\uparrow s(S)$. On the range of $e_n$, the spectrum of
		$S$ lies in $[1/n,n]$; hence
		\[
		n^{-1}e_n
		\leq
		e_nSe_n
		\leq
		ne_n.
		\]
		Consequently,
		\[
		\tau(e_n)
		\leq
		n\tau(e_nSe_n)
		\leq
		n\tau(S)
		=
		n\sum_{i=1}^3\tau(A_i)
		<\infty.
		\]
		Since $0\leq A_i\leq S$, we also have
		\[
		0\leq A_{i,n}
		\leq e_nSe_n
		\leq ne_n.
		\]
		Thus $A_{1,n},A_{2,n},A_{3,n}$ are bounded positive elements of the
		finite-tracial corner $e_n\M e_n$, equipped with the restricted
		trace.
		
		We also have $A_{i,n}\to A_i$ in $L^1(\M,\tau)$. Indeed, let
		$0\le X\le S$ and put $X_n=e_nXe_n$. Since $s(X)\le s(S)$ and
		$e_n\uparrow s(S)$, the normality of $\tau$ gives
		\begin{align*}
			\delta_n(X)^2
			&:=\|(\one-e_n)X^{1/2}\|_2^2\\
			&=\tau\!\left(X^{1/2}(\one-e_n)X^{1/2}\right)
			\longrightarrow
			\tau\!\left(X^{1/2}(\one-s(S))X^{1/2}\right)=0.
		\end{align*}
		Using
		\[
		X-e_nXe_n=(\one-e_n)X+e_nX(\one-e_n)
		\]
		and noncommutative H\"older, we obtain
		\begin{align*}
			\|X-e_nXe_n\|_1
			&\le
			\|(\one-e_n)X^{1/2}\|_2\|X^{1/2}\|_2\\
			&\qquad+
			\|e_nX^{1/2}\|_2\|X^{1/2}(\one-e_n)\|_2\\
			&\le
			2\|X^{1/2}\|_2\,\delta_n(X)\longrightarrow0,
		\end{align*}
		where
		$\|X^{1/2}(\one-e_n)\|_2
		=\|(\one-e_n)X^{1/2}\|_2$ by traciality.
		Since $0\leq A_i\leq S$ and $A_{i,n}=e_nA_ie_n$, the
		preceding estimate applied with $X=A_i$ gives
		\[
		\|A_i-A_{i,n}\|_1\longrightarrow0,
		\qquad i=1,2,3.
		\]

			Applying Theorem~\ref{thm:three-point} in $e_n\M e_n$ with
			the restricted trace, $r=1/t$, and $P_i=A_{i,n}^{t/2}$,
			we obtain
		
		\[
		\bigl[Q_t(A_{i,n},A_{j,n})\bigr]_{i,j=1}^3\ge0.
		\]
		By the joint $L^1$-continuity from Lemma~\ref{lem:Qt-analytic},
		\[
		Q_t(A_{i,n},A_{j,n})
		\longrightarrow
		Q_t(A_i,A_j).
		\]
		Since the cone of positive semidefinite $3\times3$ matrices is closed,
		we obtain
		\[
		\bigl[Q_t(A_i,A_j)\bigr]_{i,j=1}^3\ge0.
		\]
		
		Now take $A_1=A$, $A_2=B$, and $A_3=C$. The preceding positivity
		gives vectors $\xi_1,\xi_2,\xi_3$ in a Hilbert space such that
		\[
		Q_t(A_i,A_j)=\langle \xi_i,\xi_j\rangle.
		\]
		Since $Q_t(A_i,A_i)=\tau(A_i)$, we have
		\[
		d_t(A_i,A_j)^2
		=
		\frac12\|\xi_i-\xi_j\|^2.
		\]
		Therefore
		\[
		d_t(A,C)
		=
		\frac1{\sqrt2}\|\xi_1-\xi_3\|
		\le
		\frac1{\sqrt2}\|\xi_1-\xi_2\|
		+
		\frac1{\sqrt2}\|\xi_2-\xi_3\|,
		\]
		that is,
		\[
		d_t(A,C)\le d_t(A,B)+d_t(B,C).
		\]
		This proves the triangle inequality.

			For the endpoint comparison, repeat the preceding approximation
			argument with
			\[
			S:=A+B,\qquad e_n:=\one_{[1/n,n]}(S),
			\]
			and set
			\[
			A_n:=e_nAe_n,\qquad B_n:=e_nBe_n.
			\]
			As above,
			\[
			\tau(e_n)\le n\tau(S)<\infty,\qquad
			0\le A_n\le ne_n,\qquad 0\le B_n\le ne_n.
			\]
			Thus $A_n,B_n$ are bounded positive elements of the
			finite-tracial corner $e_n\M e_n$, equipped with the
			restricted trace. Moreover,
			\[
			\|A_n-A\|_1+\|B_n-B\|_1\longrightarrow0.
			\]
			Applying Corollary~\ref{cor:finite-realization} in this corner gives
			\[
			d_2(A_n,B_n)\le d_t(A_n,B_n)\le d_1(A_n,B_n).
			\]
			By the continuity of the trace on $L^1$,
			Lemma~\ref{lem:Qt-analytic}, and the definition of $d_s$,
			\[
			d_s(A_n,B_n)\longrightarrow d_s(A,B),
			\qquad s\in\{1,t,2\}.
			\]
			Passing to the limit yields
			\[
			d_2(A,B)\le d_t(A,B)\le d_1(A,B).
			\]
		
		These are the middle inequalities in \eqref{eq:endpoint-bounds}.
		The first inequality in \eqref{eq:endpoint-bounds} follows from
		$Q_2^\tau(A,B)\le\sqrt{ab}$.
		
		For $t=1$, traciality gives
		\[
		Q_1(A,B)=\tau(A^{1/2}B^{1/2}),
		\]
		and hence
		\[
		d_1(A,B)^2
		=
		\frac12\|A^{1/2}-B^{1/2}\|_2^2.
		\]
		The noncommutative $L^p$-extension of the Powers--St\o rmer
		inequality, applied with $p=1$ and $\theta=1/2$, gives
		\[
		\|A^{1/2}-B^{1/2}\|_2
		\leq
		\|A-B\|_1^{1/2};
		\]
		see \cite[Appendix, Theorem, p.~289]{HiaiNakamura}. Squaring this
		inequality and combining it with the preceding identity yields
		\eqref{eq:Powers-Stormer-bound}. The corresponding classical
		bounded-operator inequality appears in
		\cite[Lemma~4.1, p.~21]{PowersStormer}.
		
		It remains to prove \eqref{eq:FvdG-bound}. Let
		\[
		P:=\one_{(0,\infty)}(A-B)
		\]
		and set
		\[
		a_1=\tau(PAP),\quad a_2=\tau((\one-P)A(\one-P)),\quad
		b_1=\tau(PBP),\quad b_2=\tau((\one-P)B(\one-P)).
		\]
		Let
		\[
		X:=A^{1/2}B^{1/2}\in L^1(\M,\tau),
		\]
		and let
		\[
		X=V|X|
		\]
		be its polar decomposition. By the polar decomposition theorem, $V$
		is a partial isometry with initial projection
		\[
		V^*V=s(|X|).
		\]
		Put $U:=V^*$. Then
		\[
		UU^*=V^*V=s(|X|)\leq\one
		\]
		and
		\[
		UX=V^*V|X|=|X|.
		\]
		Hence, by adjoint invariance of the $L^1$-norm and traciality,
		\begin{align*}
			Q_2^\tau(A,B)
			&=\|B^{1/2}A^{1/2}\|_1\\
			&=\|A^{1/2}B^{1/2}\|_1\\
			&=\tau(UX)\\
			&=\tau(XU)\\
			&=\langle A^{1/2},B^{1/2}U\rangle_2\\
			&=\bigl|\langle A^{1/2},B^{1/2}U\rangle_2\bigr|.
		\end{align*}
		We now decompose the two $L^2$-elements according to $P$ and
		$\one-P$:
		\[
		A^{1/2}
		=
		PA^{1/2}+(\one-P)A^{1/2},
		\qquad
		B^{1/2}U
		=
		PB^{1/2}U+(\one-P)B^{1/2}U.
		\]
		Since $P(\one-P)=0$, the corresponding cross terms in the
		$L^2$-inner product vanish. Hence
		\begin{align*}
			\langle A^{1/2},B^{1/2}U\rangle_2
			&=
			\langle PA^{1/2},PB^{1/2}U\rangle_2\\
			&\quad+
			\langle(\one-P)A^{1/2},
			(\one-P)B^{1/2}U\rangle_2.
		\end{align*}
		Taking absolute values and applying the triangle inequality followed by
		the Hilbert-space Cauchy--Schwarz inequality, we obtain
		\begin{align*}
			Q_2^\tau(A,B)
			&\leq
			\|PA^{1/2}\|_2\,\|PB^{1/2}U\|_2\\
			&\quad+
			\|(\one-P)A^{1/2}\|_2\,
			\|(\one-P)B^{1/2}U\|_2.
		\end{align*}
		
		Moreover,
		\[
		\|PA^{1/2}\|_2^2=a_1,
		\qquad
		\|(\one-P)A^{1/2}\|_2^2=a_2,
		\]
		while $UU^*\le\one$ and traciality imply
		\[
		\begin{aligned}
			\|PB^{1/2}U\|_2^2
			=\tau\!\left(U^*B^{1/2}PB^{1/2}U\right)
			\le b_1,\quad
			\|(\one-P)B^{1/2}U\|_2^2
			\le b_2.
		\end{aligned}
		\]
		Consequently,
		\[
		Q_2^\tau(A,B)
		\le
		\sqrt{a_1b_1}+\sqrt{a_2b_2}.
		\]
		Together with
		\[
		\|A-B\|_1=(a_1+b_2)-(b_1+a_2),
		\qquad
		a+b=(a_1+b_2)+(b_1+a_2),
		\]
		and
		\[
		\left(\sqrt{a_1b_1}+\sqrt{a_2b_2}\right)^2
		\le
		(a_1+b_2)(b_1+a_2),
		\]
		this yields
		\[
		\|A-B\|_1^2\le (a+b)^2-4Q_2^\tau(A,B)^2.
		\]
		The second inequality in \eqref{eq:FvdG-bound} follows from
		\begin{align*}
			(a+b)^2-4Q_2^\tau(A,B)^2
			&=(a+b-2Q_2^\tau(A,B))(a+b+2Q_2^\tau(A,B)),\\
			a+b-2Q_2^\tau(A,B)
			&=2d_2(A,B)^2,
		\end{align*}
		and $2Q_2^\tau(A,B)\le a+b$.
		
		The estimates also prove separation. Indeed, if $d_t(A,B)=0$,
		then $d_2(A,B)\leq d_t(A,B)$ implies $d_2(A,B)=0$. Hence
		\eqref{eq:FvdG-bound} gives
		\[
		\|A-B\|_1^2
		\leq
		4\bigl(\tau(A)+\tau(B)\bigr)d_2(A,B)^2
		=0,
		\]
		and therefore $A=B$.
		
		Let $(X_n)$ be $d_t$-Cauchy. By
		\eqref{eq:endpoint-bounds}, the sequence
		$\bigl(\sqrt{\tau(X_n)}\bigr)$ is Cauchy and hence bounded.
		Therefore, $\bigl(\tau(X_n)\bigr)$ is bounded. Since $d_2\le d_t$,
		\eqref{eq:FvdG-bound} implies that $(X_n)$ is $L^1$-Cauchy. Hence
		$X_n\to X\in L^1(\M,\tau)_+$. By
		\eqref{eq:Powers-Stormer-bound},
		\[
		d_1(X_n,X)\to0,
		\]
		and therefore $d_t(X_n,X)\to0$.
		
			Thus $(L^1(\M,\tau)_+,d_t)$ is complete.
		
		Conversely, $L^1$-convergence implies
		$d_t$-convergence by \eqref{eq:Powers-Stormer-bound} and $d_t\le d_1$.
		The same estimates also show that the $d_t$-Cauchy sequences and the
		$L^1$-Cauchy sequences coincide.

			Finally, suppose that $d_t(X_n,X)\to0$ for
			$X_n,X\in L^1(\M,\tau)_+$. By \eqref{eq:endpoint-bounds},
			\[
			\left|\sqrt{\tau(X_n)}-\sqrt{\tau(X)}\right|
			\le \sqrt2\,d_t(X_n,X)\longrightarrow0.
			\]
			Thus $\bigl(\tau(X_n)\bigr)$ is bounded. Using
			\eqref{eq:FvdG-bound} and $d_2\le d_t$, we obtain
			\[
			\|X_n-X\|_1^2
			\le
			4\bigl(\tau(X_n)+\tau(X)\bigr)d_t(X_n,X)^2
			\longrightarrow0.
			\]
			Together with the converse implication proved above, this shows
			that convergence in $d_t$ is equivalent to convergence in $L^1$.
			Since both topologies are metrizable, they coincide.
		
	\end{proof}

 The finite-corner argument above is local to the three operators under
	consideration. Indeed, it uses only the spectral projections
	\[
	e_n=\one_{[1/n,n]}(A_1+A_2+A_3),
	\]
	which increase to the support of the particular sum
	$A_1+A_2+A_3$ and satisfy $\tau(e_n)<\infty$. These projections
	need not increase to $\one$. Consequently, the proof requires neither
	$\tau(\one)<\infty$ nor a global finite-trace exhaustion of $\one$;
	in particular, no $\sigma$-finiteness assumption is implicit.

	\section{Sharp metricity classification}
	\label{sec:classification}
	
We first prove that, for every $0<t<1$, the triangle inequality fails
already on $M_2(\C)^{++}$. For $t>2$, we regularize the rank-one
counterexample of Kom\'alovics and Moln\'ar to obtain the same strict
failure for positive definite $2\times2$ matrices. These
finite-dimensional counterexamples will then be transported through
finite-trace matrix corners. The separately defined logarithmic endpoint
$t=0$ is treated in Section~\ref{sec:zero-endpoint}. Throughout this
section, $Q_t^{\Tr}$ and $d_t^{\Tr}$ denote the quantities associated
with the usual matrix trace.

	\begin{proposition}
		\label{prop:matrix-obstructions}
		Let $t>0$ with $t\notin[1,2]$. There exist
		$A_t,B_t,C_t\in M_2(\C)^{++}$ such that
		\begin{equation}\label{eq:matrix-strict-defect}
			d_t^{\Tr}(A_t,B_t)
			>d_t^{\Tr}(A_t,C_t)+d_t^{\Tr}(C_t,B_t).
		\end{equation}
	\end{proposition}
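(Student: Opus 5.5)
The plan is to treat the two ranges separately. In both cases I would first produce a strict defect on positive semidefinite $2\times2$ matrices, and then regularize to $M_2(\C)^{++}$ by continuity. For the regularization, note that for fixed $t>0$ the map $(A,B)\mapsto Q_t^{\Tr}(A,B)$ is continuous on $M_2(\C)_+\times M_2(\C)_+$, since functional calculus is norm continuous in finite dimensions. So if \eqref{eq:matrix-strict-defect} holds strictly for some $A,B,C\in M_2(\C)_+$, it persists for $A+\varepsilon\one$, $B+\varepsilon\one$, $C+\varepsilon\one$ when $\varepsilon>0$ is small. It therefore suffices to find semidefinite defects.

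\emph{The case $t>2$.} Here I would take the rank-one triple from \cite[Proposition~8]{KomalovicsMolnar}. On rank-one projections $P=uu^*$ and $Q=vv^*$ one has $Q_t^{\Tr}(P,Q)=|\langle u,v\rangle|^{2/t}$, so $d_t^{\Tr}(P,Q)^2=1-|\langle u,v\rangle|^{2/t}$. Their computation then gives three unit vectors in $\C^2$ whose triangle inequality fails strictly for every $t>2$. For example, one can take three real unit vectors at angles $0,\phi,2\phi$ with $\phi$ small, which reduces the claim to comparing $1-\cos^{2/t}(2\phi)$ with $4(1-\cos^{2/t}\phi)$ to second order. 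The regularization above then finishes this case.

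\emph{The case $0<t<1$.} Here rank-one projections are useless, because $d_t$ is a metric on pure states, and commuting triples reduce to the Hellinger metric. So at least one mixed, noncommuting element is needed. My plan is a midpoint defect: find $A,B,C$ with $d_t(A,C)=d_t(C,B)$ and $d_t(A,B)^2>4\,d_t(A,C)^2$. I would write
\[
Q_t(A,B)=\|B^{t/4}A^{t/4}\|_p^p,\qquad p=2/t>2,
\]
and use Clarkson's inequality in $S_p$ for $p\ge2$,
\[
\Bigl\|\tfrac{X+Y}{2}\Bigr\|_p^p+\Bigl\|\tfrac{X-Y}{2}\Bigr\|_p^p\le\tfrac12\bigl(\|X\|_p^p+\|Y\|_p^p\bigr).
\]
This inequality is strict unless $X$ and $Y$ are specially aligned, and that strictness should produce the strict defect. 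Concretely, I would use a symmetric configuration. Let $A=\operatorname{diag}(1,0)$ and let $B=UAU^*$ with $U$ a rotation by angle $\phi$. For the middle point, take the candidate $C=U_{1/2}\operatorname{diag}(\lambda,\mu)U_{1/2}^*$ obtained by rotating by $\phi/2$, where $\operatorname{diag}(\lambda,\mu)$ is chosen to be either diagonal or a slightly mixed spectrum. With this choice, $d_t(A,C)$ and $d_t(C,B)$ are equal by symmetry and computable in closed form. I would then expand in $\phi$ (or in the mixing parameter), choosing the mixing to depend on $\phi$, and use Clarkson's inequality to show that the quadratic coefficient of $4\,d_t(A,C)^2$ is strictly smaller than that of $d_t(A,B)^2$ exactly when $p>2$, i.e.\ when $t<1$.

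The main obstacle is this last step. A naive midpoint such as $C=(A+B)/2$ gives no defect at leading order: the defect must come from tuning the spectrum of $C$ so that the Clarkson gap is first order in the relevant parameter. I would first try to optimize the spectrum of $C$ for small $\phi$. If that fails, I would fall back to a direct numerical verification for a fixed $t$, followed by a monotonicity argument in $t$.
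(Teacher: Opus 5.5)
Your overall structure is the paper's: first a semidefinite defect, then a perturbation of all three matrices by $\varepsilon\one$, with Proposition~8 of Kom\'alovics--Moln\'ar for $t>2$. Your ansatz for $0<t<1$ also has the shape of the paper's triple: rank-one $A,B$ symmetric about a bisector, and $C$ diagonal in the bisector frame with a tuned spectrum.

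The genuine gap is that for $0<t<1$ you never exhibit the defect. The mechanism you predict would also not produce it. What is missing is an exact computation followed by an optimization. For $P=uu^*$ one has $Q_t^{\Tr}(P,C)=\langle u,C^{t/2}u\rangle^{1/t}$. Take $u=(\cos\psi,\sin\psi)$, $v=(\cos\psi,-\sin\psi)$, $Q=vv^*$ and $C=\operatorname{diag}(\gamma_1,\gamma_2)$. Put $q=2/t$, $s=2/(2-t)$ (so $1/s+1/q=1$), $x=\cos^2\psi$ and $y=\sin^2\psi$. Writing $\gamma_i=z_i^q$ gives $4d_t^{\Tr}(P,C)^2=2+2\|z\|_q^q-4(z_1x+z_2y)^{q/2}$. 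H\"older duality shows that the minimum over $C$ is $2-2(x^s+y^s)^{q/s}$. It is attained at $C=\mu\operatorname{diag}(x^s,y^s)/(x^s+y^s)$ with $\mu=(x^s+y^s)^{q/s}$, which is the paper's $C_t$. Since $d_t^{\Tr}(P,Q)^2=1-(x-y)^q$ and $x+y=1$, the midpoint defect is then equivalent to
\[
(x+y)^q+(x-y)^q<2\bigl(x^s+y^s\bigr)^{q/s},\qquad 0<y<x.
\]
This is the \emph{strict} scalar Clarkson inequality with \emph{conjugate} exponents. The paper takes $x=3/4$, $y=1/4$ and proves strictness by Friedrichs' argument.

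Three further points in your plan do not hold up:
\begin{enumerate}
\item The Schatten inequality you quote is the equal-exponent form, and it is too weak here. By the power-mean inequality, $2(x^s+y^s)^{q/s}\le 2^{q-1}(x^q+y^q)$, so it only bounds the left side by a larger quantity than the one needed.
\item Your expected mechanism fails within your own ansatz. With $\epsilon=\sin^2\psi$, the optimized $4d_t^{\Tr}(P,C)^2$ is $2q\epsilon-2(q-1)\epsilon^{s}+O(\epsilon^2)$, while $d_t^{\Tr}(P,Q)^2=2q\epsilon+O(\epsilon^2)$. So the quadratic coefficients in $\phi=2\psi$ coincide. The defect appears only at the fractional order $\phi^{2s}$, with $2<2s<4$.
\item A numerical check at one $t$ plus an unproved ``monotonicity in $t$'' does not cover all of $(0,1)$. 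The violating $C$ necessarily depends on $t$.
\end{enumerate}

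For $t>2$ the citation carries the case, exactly as in the paper. However, your illustration is wrong on part of the range, and a comparison ``to second order'' detects nothing. With $a=2/t$,
\[
1-\cos^{a}(2\phi)-4\bigl(1-\cos^{a}\phi\bigr)=\frac{a(2-3a)}{2}\,\phi^4+O(\phi^6).
\]
So the $\phi^2$ terms cancel. For $2<t<3$ (where $a>2/3$), small equally spaced triples satisfy the triangle inequality. A near-orthogonal configuration works for every $t>2$. Take $u\perp w$ and $v$ at a small angle $\theta$ from $w$. Then $d_t^{\Tr}(uu^*,ww^*)=1$, $d_t^{\Tr}(uu^*,vv^*)\le 1-\frac12\sin^a\theta$ and $d_t^{\Tr}(vv^*,ww^*)\le\theta/\sqrt2$. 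Since $a<1$, $\sin^a\theta\gg\theta$ for small $\theta$, so the triangle inequality fails.
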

	
	\begin{proof}
		We treat the two ranges separately.
		
		\smallskip
		\noindent\emph{Case 1: $0<t<1$.}
		Put
		\begin{equation}\label{eq:Clarkson-exponents}
			s:=\frac{2}{2-t},
			\qquad
			q:=\frac2t.
		\end{equation}
		Then $1<s<2$, $q=s/(s-1)>2$, and $1/s+1/q=1$. Let
		\[
		x:=\frac34,
		\qquad
		y:=\frac14,
		\qquad
		S:=x^s+y^s,
		\qquad
		\mu:=S^{q/s},
		\]
		and define
		\begin{equation}\label{eq:Clarkson-alpha-beta}
			\alpha:=\frac{x^s}{S},
			\qquad
			\beta:=\frac{y^s}{S}.
		\end{equation}
		Thus $0<\mu<1$ and $\alpha+\beta=1$. Consider the unit vectors
		\[
		u:=\begin{pmatrix}\sqrt3/2\\[1mm]1/2\end{pmatrix},
		\qquad
		v:=\begin{pmatrix}\sqrt3/2\\[1mm]-1/2\end{pmatrix},
		\]
		the rank-one projections
		\begin{equation}\label{eq:Clarkson-PQ}
			P:=uu^*=\frac14
			\begin{pmatrix}3&\sqrt3\\ \sqrt3&1\end{pmatrix},
			\qquad
			Q:=vv^*=\frac14
			\begin{pmatrix}3&-\sqrt3\\ -\sqrt3&1\end{pmatrix},
		\end{equation}
		and the positive definite matrix
		\begin{equation}\label{eq:Clarkson-C}
			C_t:=\mu\begin{pmatrix}\alpha&0\\0&\beta\end{pmatrix}.
		\end{equation}
		We have $\Tr P=\Tr Q=1$ and $\Tr C_t=\mu$.
		
		Since $\langle u,v\rangle=1/2$, one has $PQP=P/4$. Positive powers
		of a projection equal the projection, and hence
		\[
		Q_t^{\Tr}(P,Q)
		=\Tr\!\left((PQP)^{1/t}\right)=2^{-2/t}=2^{-q}.
		\]
		Consequently,
		\begin{equation}\label{eq:Clarkson-distance-PQ}
			d_t^{\Tr}(P,Q)^2=1-2^{-q}.
		\end{equation}
		
		We next compute the distances to $C_t$. Set
		$r=t/2=1/q$, so that $1-r=1/s$. Since $1+sr=s$,
		\begin{align}
			x\alpha^r+y\beta^r
			&=S^{-r}\bigl(x^{1+sr}+y^{1+sr}\bigr)
			=S^{1-r},\label{eq:Clarkson-expectation}\\
			\mu^r&=S^{1/s}=S^{1-r}.\label{eq:Clarkson-mu-power}
		\end{align}
		It follows that
		\[
		PC_t^{t/2}P
		=\langle u,C_t^ru\rangle P
		=S^{2(1-r)}P
		=\mu^tP.
		\]
		Therefore
		\begin{equation}\label{eq:Clarkson-affinities}
			Q_t^{\Tr}(P,C_t)=\mu.
		\end{equation}
		The squared coordinates of $u$ and $v$ are identical, so the same
		calculation gives $Q_t^{\Tr}(Q,C_t)=\mu$. Hence
		\begin{equation}\label{eq:Clarkson-distances-PC-QC}
			d_t^{\Tr}(P,C_t)^2
			=d_t^{\Tr}(Q,C_t)^2
			=\frac{1-\mu}{2}.
		\end{equation}
		
  For $1<s<2$ and $q=s/(s-1)$, the scalar Clarkson
		inequality \cite{Clarkson} gives, for real $a,b$,
		\begin{equation}\label{eq:scalar-Clarkson}
			|a+b|^q+|a-b|^q
			\leq 2\bigl(|a|^s+|b|^s\bigr)^{q/s}.
		\end{equation}
		We verify strictness for $0<b<a$ using Friedrichs' argument
		as recalled in \cite[pp.~1364--1365]{MaligrandaSabourova}.
		Put $z=b/a\in(0,1)$ and, for $z^{s-1}\leq\lambda\leq1$, set
		\[
		\begin{aligned}
			F(\lambda)
			&=(1+\lambda^{1-q}z)(1+\lambda z)^{q-1}\\
			&\quad +(1-\lambda^{1-q}z)(1-\lambda z)^{q-1}.
		\end{aligned}
		\]
		Differentiation gives
		\[
		F'(\lambda)
		=(q-1)z(1-\lambda^{-q})
		\bigl((1+\lambda z)^{q-2}-(1-\lambda z)^{q-2}\bigr)<0
		\]
		for $z^{s-1}<\lambda<1$, since $q>2$.
		Using $(s-1)(q-1)=1$ and $q-1=q/s$, we obtain
		\[
		(1+z)^q+(1-z)^q
		=F(1)<F(z^{s-1})=2(1+z^s)^{q/s}.
		\]
		Multiplication by $a^q$ proves the required strictness.
		By symmetry and direct substitution, equality in
		\eqref{eq:scalar-Clarkson} for $a,b\geq0$ holds exactly when
		$ab=0$ or $a=b$. In particular, $a=x=3/4$ and $b=y=1/4$
		give
		\begin{equation}\label{eq:strict-Clarkson}
			1+2^{-q}<2(x^s+y^s)^{q/s}=2\mu.
	\end{equation}
		Combining \eqref{eq:Clarkson-distance-PQ},
		\eqref{eq:Clarkson-distances-PC-QC}, and \eqref{eq:strict-Clarkson}, we get
		\[
		d_t^{\Tr}(P,Q)^2-4d_t^{\Tr}(P,C_t)^2
		=2\mu-1-2^{-q}>0.
		\]
		Thus
		\begin{equation}\label{eq:strict-defect-below}
			d_t^{\Tr}(P,Q)
			>d_t^{\Tr}(P,C_t)+d_t^{\Tr}(C_t,Q).
		\end{equation}
		
		Although $P$ and $Q$ have rank one, the violation can be made positive
		definite. For $\varepsilon>0$, set
		\[
		P_\varepsilon=P+\varepsilon I_2,
		\qquad
		Q_\varepsilon=Q+\varepsilon I_2,
		\qquad
		C_{t,\varepsilon}=C_t+\varepsilon I_2.
		\]
		For fixed $t>0$, continuous functional calculus makes
		$(A,B)\mapsto d_t^{\Tr}(A,B)$ continuous on
		$M_2(\C)_+\times M_2(\C)_+$. Hence the strict defect in
		\eqref{eq:strict-defect-below} persists for all sufficiently small
		$\varepsilon>0$.
		
		\smallskip
		\noindent\emph{Case 2: $t>2$.}
		For rank-one projections $P_u=uu^*$ and $P_v=vv^*$, one has
		\[
		d_t^{\Tr}(P_u,P_v)^2
		=1-|\langle u,v\rangle|^{2/t}.
		\]
		Kom\'alovics and Moln\'ar proved that, when $t>2$, the restriction of
		$d_t^{\Tr}$ to rank-one projections already violates the triangle
		inequality; see \cite[Proposition~8]{KomalovicsMolnar}. Thus there are
		rank-one projections $P,Q,R\in M_2(\C)$ with a strict triangle defect.
		For fixed $t>0$, the map $(A,B)\mapsto d_t^{\Tr}(A,B)$ is continuous on
		$M_2(\C)_+\times M_2(\C)_+$. Hence adding $\varepsilon I_2$ to all
		three projections preserves the strict defect for all sufficiently small
		$\varepsilon>0$, and makes the three matrices positive definite.
		
		In either case, choosing a sufficiently small regularization and relabeling
		the resulting triple as $A_t,B_t,C_t$ proves
		\eqref{eq:matrix-strict-defect}.
	\end{proof}
	
 We next show that every nonabelian semifinite von Neumann algebra has a
	finite-trace corner containing a unital copy of $M_2(\C)$. This allows
	the strict $2\times2$ triangle defects constructed above to be embedded
	into the algebra.
	\begin{lemma}
		\label{lem:finite-M2-corner}
		Let $(\M,\tau)$ be a nonabelian von Neumann algebra with a faithful normal
		semifinite trace $\tau$. Then there exist a projection $f\in\M$ with
		$\tau(f)<\infty$, a unital $*$-monomorphism
		\[
		\iota:M_2(\C)\longrightarrow f\M f,
		\]
		and a constant $c>0$ such that
		\begin{equation}\label{eq:trace-on-M2-corner}
			\tau(\iota(X))=c\,\Tr(X),
			\qquad X\in M_2(\C).
		\end{equation}
	\end{lemma}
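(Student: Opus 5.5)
The plan is to produce two equivalent, mutually orthogonal, nonzero finite-trace projections $f_1,f_2$ together with a partial isometry $v\in\M$ satisfying $v^*v=f_1$ and $vv^*=f_2$. With $f:=f_1+f_2$, the matrix units
\[
e_{11}=f_1,\qquad e_{22}=f_2,\qquad e_{21}=v,\qquad e_{12}=v^*
\]
generate a unital copy of $M_2(\C)$ in $f\M f$, and the resulting map $\iota$ is injective because $M_2(\C)$ is simple. The trace identity \eqref{eq:trace-on-M2-corner} then follows from traciality. Indeed,
\[
\tau(e_{12})=\tau(v^*f_2 v\,v^*)=\tau(f_2v^*v\,v^*)=0,
\]
since $f_2v^*=0$; the same argument gives $\tau(e_{21})=0$. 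Moreover,
\[
\tau(e_{22})=\tau(vv^*)=\tau(v^*v)=\tau(e_{11}).
\]
Hence $\tau(\iota(X))=c\Tr(X)$ with $c:=\tau(f_1)\in(0,\infty)$. Finiteness gives $\tau(f)=2c<\infty$.

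To find $f_1,f_2,v$ I would argue as follows. Since $\M$ is nonabelian, there is a nonzero element $x\in\M$ with $x\notin Z(\M)$. I then want two orthogonal nonzero projections that are not centrally orthogonal, meaning that their central supports $z(p)$ and $z(q)$ satisfy $z(p)z(q)\neq0$.

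A standard route is this. If every pair of orthogonal projections were centrally orthogonal, then each projection $p$ would satisfy $z(p)(\one-p)$ centrally orthogonal to $p$. That forces $z(p)(\one-p)=0$, so $p=z(p)$ is central. Every projection would then be central, and since $\M$ is generated by its projections, $\M$ would be abelian, a contradiction.

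So we obtain orthogonal $p,q$ with $z(p)z(q)\neq0$. By the comparison theorem (see, e.g., Kadison--Ringrose, Lemma 6.1.8), $p\M q\neq\{0\}$. Take $0\neq y\in p\M q$ with polar decomposition $y=w|y|$. Then $w^*w\le q$ and $ww^*\le p$ are nonzero and equivalent, and they are orthogonal because $pq=0$.

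Next I need to make them of finite trace. Semifiniteness of $\tau$ gives, inside the nonzero projection $w^*w$, a nonzero subprojection $g\le w^*w$ with $\tau(g)<\infty$. One way to see this: $w^*w\M w^*w$ carries the restricted faithful normal semifinite trace, so there is a nonzero positive $h\le w^*w$ with $\tau(h)<\infty$, and a spectral projection $\one_{[\delta,\infty)}(h)\neq0$ has trace at most $\tau(h)/\delta$. Set
\[
f_1:=g,\qquad v:=wg,\qquad f_2:=vv^*=wgw^*\le ww^*.
\]
Then $f_2\perp f_1$ and $\tau(f_2)=\tau(g)<\infty$.

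The main obstacle is the step from noncommutativity to a pair of orthogonal, non-centrally-orthogonal projections, together with the fact $p\M q\neq0$. I would cite the standard comparison facts rather than reprove them, keeping only the short argument that "all projections central" implies abelian. The rest is routine bookkeeping.
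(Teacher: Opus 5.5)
Your argument is correct, but it takes the opposite route from the paper's.

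\textbf{The paper's route.} It applies semifiniteness globally first. It shows that the finite-trace projections form a directed family with supremum $\one$. A strong-limit argument on commutators then shows that some corner $e\M e$ with $\tau(e)<\infty$ is nonabelian. Only then does it build the matrix units inside that corner, from noncommuting projections $a,b\in e\M e$ and the nonzero element $ab(e-a)$. The polar part of $ab(e-a)$ has initial projection under $e-a$ and final projection under $a$. Finiteness of $\tau(f)$ is automatic from $f\le e$.

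\textbf{Your route.} You produce the equivalent orthogonal pair in $\M$ first and then shrink it. Your cut-down $g\le w^*w$, $v=wg$, $f_2=wgw^*$ needs semifiniteness only at the single projection $w^*w$. It therefore avoids the directed-net and strong-convergence argument entirely. The price is the detour through central supports and the comparison fact that $z(p)z(q)\neq0$ forces $p\M q\neq\{0\}$.

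\textbf{A shorter combination.} The detour is unnecessary. If $a,b\in\M$ are noncommuting projections, then $ab(\one-a)$ is already a nonzero element of $a\M(\one-a)$. Indeed, $ab(\one-a)=0$ gives $ab=aba$, and taking adjoints gives $ba=aba$, so $ab=ba$. Combining this one-line trick from the paper with your cut-down gives the shortest proof: it needs neither central supports nor the global net argument.

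\textbf{A small slip in the trace computation.} The step $\tau(v^*f_2vv^*)=\tau(f_2v^*vv^*)$ is not a cyclic permutation. The left side equals $\tau(v^*)$ and the right side is $0$, so as written it assumes what is being proved. Write instead
\[
\tau(v^*)=\tau(v^*f_2)=\tau(f_2v^*)=0,
\]
using $v^*f_2=v^*$ and $f_2v^*=0$. The cyclic move is legitimate because all these elements lie in the corner $f\M f$, on which $\tau$ is a finite trace. Alternatively, as the paper does, note that $\tau\circ\iota$ is a trace on $M_2(\C)$ and hence a multiple of $\Tr$.
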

	
	\begin{proof}
   Let
	\[
	\mathcal{P}_\tau
	:=
	\{e\in\M:e=e^*=e^2,\ \tau(e)<\infty\}
	\]
	be the set of finite-trace projections in $\M$. For
	$e,g\in\mathcal{P}_\tau$, let $e\vee g$ denote their join, namely,
	the smallest projection in $\M$ dominating both $e$ and $g$.
	Since
	\[
	\tau(e\vee g)
	\leq
	\tau(e)+\tau(g)
	<\infty,
	\]
	we have $e\vee g\in\mathcal{P}_\tau$. Thus
	$\mathcal{P}_\tau$ is directed under the usual order on projections.
	
	Set
	\[
	z
	:=
	\bigvee_{e\in\mathcal{P}_\tau}e,
	\]
	where the supremum is taken in the projection lattice of $\M$. We
	claim that $z=\one$. Suppose, to the contrary, that $z<\one$.
	Since $\tau$ is semifinite and $\one-z$ is a nonzero positive
	element, there exists a nonzero $y\in\M_+$ such that
	\[
	y\leq\one-z
	\qquad\text{and}\qquad
	\tau(y)<\infty.
	\]
	Choose $0<\delta<\lVert y\rVert$. Then the spectral projection
	\[
	e_\delta
	:=
	\one_{[\delta,\infty)}(y)
	\]
	is nonzero. Moreover, since $0\leq y\leq\one-z$ and $\one-z$ is a
	projection, we have
	\[
	y=(\one-z)y(\one-z),
	\]
	and hence
	\[
	e_\delta\leq\one-z.
	\]
	Functional calculus applied to the scalar inequality
	\[
	\lambda
	\geq
	\delta\one_{[\delta,\infty)}(\lambda),
	\qquad \lambda\geq0,
	\]
	also gives
	\[
	y\geq\delta e_\delta.
	\]
	Therefore,
	\[
	\tau(e_\delta)
	\leq
	\delta^{-1}\tau(y)
	<\infty,
	\]
	so $e_\delta\in\mathcal{P}_\tau$. By the definition of $z$, we must
	also have $e_\delta\leq z$. Consequently,
	\[
	e_\delta\leq z
	\qquad\text{and}\qquad
	e_\delta\leq\one-z,
	\]
	which forces $e_\delta=0$, contradicting its choice. Hence
	$z=\one$. 

		Suppose every finite corner $e\M e$, $e\in\mathcal P_\tau$, were
		abelian. Then, for arbitrary $X,Y\in\M$,
		\[
		[eXe,eYe]=0,
		\qquad e\in\mathcal P_\tau.
		\]
		Along the directed net $\mathcal P_\tau$, one has $e\uparrow\one$
		strongly and hence
		\[
		eXe\longrightarrow X,
		\qquad
		eYe\longrightarrow Y
		\]
		strongly. Both nets are uniformly bounded, so multiplication is strongly
		continuous along them and $[eXe,eYe]\to[X,Y]$ strongly. Thus
		$[X,Y]=0$, contrary to nonabelianness of $\M$. Consequently, some
		finite corner $e\M e$ is nonabelian.

		Projections generate $e\M e$ as a von Neumann algebra, so there exist
		noncommuting projections $a,b\in e\M e$. Put
		\[
		x:=ab(e-a).
		\]
		Then $x\ne0$: otherwise $ab=aba$, while taking adjoints gives
		$ba=aba$, and hence $ab=ba$. Let $x=v|x|$ be its polar
		decomposition. Its initial and final projections satisfy
		\[
		p:=v^*v\le e-a,
		\qquad
		q:=vv^*\le a.
		\]
		Thus $p$ and $q$ are nonzero orthogonal equivalent projections. Set
		$f=p+q$. The assignments
		\[
		E_{11}\mapsto q,
		\qquad
		E_{22}\mapsto p,
		\qquad
		E_{12}\mapsto v,
		\qquad
		E_{21}\mapsto v^*
		\]
		form a system of matrix units with identity $f$, and therefore define a
		unital $*$-monomorphism
		\[
		\iota:M_2(\C)\longrightarrow f\M f.
		\]
	  Since $f\leq e$, one has $\tau(f)<\infty$. The functional
		$\tau\circ\iota$ is therefore a finite trace on $M_2(\C)$. Moreover,
		\[
		\tau(p)
		=
		\tau(v^*v)
		=
		\tau(vv^*)
		=
		\tau(q)
		=:c,
		\]
		where $0<c<\infty$ because $p$ and $q$ are nonzero projections
		dominated by $e$ and $\tau$ is faithful. Since every trace on
		$M_2(\C)$ is a scalar multiple of the usual matrix trace, it follows
		that
		\[
		\tau(\iota(X))
		=
		c\,\Tr(X),
		\qquad X\in M_2(\C).
		\]
	\end{proof}
	
  \begin{proof}[Proof of Theorem~\ref{thm:classification}\textup{(ii)}]
 		For $1\leq t\leq2$, metricity follows from
 		Theorem~\ref{thm:classification}\textup{(i)}.
 		
 		Suppose first that $\M$ is abelian. Then measurable functional
 		calculus gives, for every $t>0$,
 		\[
 		\left(
 		A^{t/4}B^{t/2}A^{t/4}
 		\right)^{1/t}
 		=
 		\left(A^{t/2}B^{t/2}\right)^{1/t}
 		=
 		A^{1/2}B^{1/2}.
 		\]
 		Hence
 		\[
 		Q_t(A,B)=\tau(A^{1/2}B^{1/2})
 		\]
 		and
 		\[
 		d_t(A,B)^2
 		=
 		\frac12
 		\|A^{1/2}-B^{1/2}\|_2^2.
 		\]
 		Thus $d_t$ is a metric for every $t>0$.
 		
 		Conversely, suppose that $\M$ is nonabelian and
 		$t\notin[1,2]$. By Lemma~\ref{lem:finite-M2-corner}, there exist
 		a finite-trace projection $f\in\M$, a unital $*$-monomorphism
 		\[
 		\iota:M_2(\C)\longrightarrow f\M f,
 		\]
 		and a constant $c>0$ such that
 		\[
 		\tau(\iota(X))=c\,\Tr(X),
 		\qquad X\in M_2(\C).
 		\]
 		Let $A_t,B_t,C_t\in M_2(\C)^{++}$ be the triple given by
 		Proposition~\ref{prop:matrix-obstructions}, and set
 		\[
 		\widehat A_t:=\iota(A_t),\qquad
 		\widehat B_t:=\iota(B_t),\qquad
 		\widehat C_t:=\iota(C_t).
 		\]
 		
 		Since $\iota$ preserves continuous functional calculus, for all
 		$X,Y\in M_2(\C)_+$,
 		\[
 		Q_t^\tau(\iota(X),\iota(Y))
 		=
 		c\,Q_t^{\Tr}(X,Y).
 		\]
 		Together with $\tau(\iota(X))=c\,\Tr(X)$, this gives
 		\[
 		d_t^\tau(\iota(X),\iota(Y))^2
 		=
 		c\,d_t^{\Tr}(X,Y)^2.
 		\]
 		Consequently,
 		\[
 		d_t^\tau(\widehat A_t,\widehat B_t)
 		>
 		d_t^\tau(\widehat A_t,\widehat C_t)
 		+
 		d_t^\tau(\widehat C_t,\widehat B_t).
 		\]
 		Thus $d_t$ fails the triangle inequality on
 		$L^1(\M,\tau)_+$.
 		
 		The three transported elements are bounded and invertible in the
 		corner $f\M f$; in particular, their support projection is $f$,
 		which has finite trace.
 		
 		Finally, suppose that $\tau(\one)<\infty$. For $X\in
 		\{\widehat A_t,\widehat B_t,\widehat C_t\}$, set
 		\[
 		X_\varepsilon:=X+\varepsilon(\one-f),
 		\qquad \varepsilon>0.
 		\]
 		Each $X_\varepsilon$ is invertible in $\M$. Since the two
 		summands are supported on the orthogonal projections $f$ and
 		$\one-f$, for every pair $X,Y$ among the transported elements,
 		\[
 		Q_t^\tau(X_\varepsilon,Y_\varepsilon)
 		=
 		Q_t^\tau(X,Y)+\varepsilon\tau(\one-f).
 		\]
 		The same term $\varepsilon\tau(\one-f)$ is added to the
 		arithmetic part of the squared distance, and therefore
 		\[
 		d_t^\tau(X_\varepsilon,Y_\varepsilon)
 		=
 		d_t^\tau(X,Y).
 		\]
 		Hence the same strict triangle defect is realized by positive
 		invertible elements of $\M$.
 \end{proof}
	
	Before proving the $C^*$-algebra assertion, we record the equivalent
	parametrization used for the
	symmetric $\alpha$-$z$ Bures--Wasserstein expression. For $z>0$, set
	\begin{equation}\label{eq:z-distance}
		D_z(A,B):=\left[
		\frac{\tau(A)+\tau(B)}2
		-\tau\!\left(
		\left(A^{1/(4z)}B^{1/(2z)}A^{1/(4z)}\right)^z
		\right)
		\right]^{1/2}.
	\end{equation}
	
	The substitution $t=1/z$ gives the equivalent sharp classification.
	
	\begin{corollary}
		\label{cor:z-classification}
		Let $(\M,\tau)$ be as in Theorem~\ref{thm:classification}\textup{(ii)}. Then
		\begin{equation}\label{eq:sharp-z-classification}
			D_z\text{ is a metric on }L^1(\M,\tau)_+
			\quad\Longleftrightarrow\quad
			z\in[1/2,1]\ \text{or}\ \M\text{ is abelian}.
		\end{equation}
	\end{corollary}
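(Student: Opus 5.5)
The plan is to reduce Corollary~\ref{cor:z-classification} to Theorem~\ref{thm:classification}\textup{(ii)} by reparametrizing. For $z>0$ we put $t:=1/z>0$. Then
\[
A^{1/(4z)}=A^{t/4},\qquad B^{1/(2z)}=B^{t/2},\qquad z=\frac1t .
\]
Comparing \eqref{eq:z-distance} with \eqref{eq:Qt-intro} and \eqref{eq:dt-intro}, we expect $D_z(A,B)=d_{1/z}(A,B)$ for all $A,B\in L^1(\M,\tau)_+$. Two points need checking for this identity.

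First, the trace expression in \eqref{eq:z-distance} must be meaningful for unbounded $A,B$. We verify that $\tau\bigl((A^{t/4}B^{t/2}A^{t/4})^{1/t}\bigr)$ is exactly the second expression in \eqref{eq:Qt-intro}. Lemma~\ref{lem:Qt-analytic} shows that this equals $\|B^{t/4}A^{t/4}\|_{2/t}^{2/t}$ and is finite, since $|B^{t/4}A^{t/4}|^2=A^{t/4}B^{t/2}A^{t/4}$ as $\tau$-measurable operators.

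Second, the bracket in \eqref{eq:z-distance} must be nonnegative, so that the square root is the same nonnegative root used for $d_t$. This follows from Lemma~\ref{lem:Qt-analytic} together with the arithmetic--geometric mean inequality, exactly as in the proof of Theorem~\ref{thm:classification}\textup{(i)}.

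With $D_z=d_{1/z}$ established, the map $z\mapsto 1/z$ is a bijection of $(0,\infty)$ onto itself, and it carries $[1/2,1]$ onto $[1,2]$. Hence $D_z$ is a metric on $L^1(\M,\tau)_+$ if and only if $d_{1/z}$ is one. By \eqref{eq:sharp-classification}, this holds if and only if $1/z\in[1,2]$ or $\M$ is abelian, that is, $z\in[1/2,1]$ or $\M$ is abelian, which is \eqref{eq:sharp-z-classification}.

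There is no real obstacle. The only care needed is in the first point, matching the operator expression in \eqref{eq:z-distance} with \eqref{eq:Qt-intro} on the full, possibly unbounded, $L^1$-cone. That identification is already contained in Lemma~\ref{lem:Qt-analytic}.
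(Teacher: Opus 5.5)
Your proposal is correct and follows the paper's own argument: the paper's proof simply notes that the definitions give $D_z=d_{1/z}$ and invokes Theorem~\ref{thm:classification}\textup{(ii)}. Your extra checks, namely well-definedness on the unbounded $L^1$-cone via Lemma~\ref{lem:Qt-analytic} and nonnegativity of the radicand, are correct elaborations of that identification and of the fact that $z\mapsto 1/z$ maps $[1/2,1]$ onto $[1,2]$.
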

	
	\begin{proof}
		The definitions give $D_z=d_{1/z}$, so the assertion follows from
		Theorem~\ref{thm:classification}\textup{(ii)}.
	\end{proof}
	
 \begin{proof}[Proof of Theorem~\ref{thm:classification}\textup{(iii)}]
		Let
		\[
		\M_\tau:=\pi_\tau(\A)'',
		\]
		and let $\bar\tau$ be the canonical faithful normal finite extension
		of $\tau$ to $\M_\tau$. Since $\tau$ is faithful, the GNS
		representation $\pi_\tau$ is faithful, and we identify $\A$ with
		its image in $\M_\tau$. For $A,B\in\A_+$, continuous functional
		calculus is preserved by this identification and
		$\bar\tau|_{\A}=\tau$; hence
		\[
		Q_t^\tau(A,B)=Q_t^{\bar\tau}(A,B).
		\]
		
For $1\leq t\leq2$,
the metricity of $d_t^\tau$ on $\A_+$
follows by restriction from
Theorem~\ref{thm:classification}\textup{(i)}.  If $\A$ is
		abelian, then, for every $t>0$,
		\[
		\left(A^{t/4}B^{t/2}A^{t/4}\right)^{1/t}
		=
		A^{1/2}B^{1/2},
		\]
		and therefore
		\[
		d_t^\tau(A,B)
		=
		2^{-1/2}\|A^{1/2}-B^{1/2}\|_{2,\tau}.
		\]
		Thus $d_t^\tau$ is a metric.
		
		Conversely, suppose that $\A$ is nonabelian and
		$t\notin[1,2]$. Since $\pi_\tau$ is faithful,
		$\M_\tau$ is nonabelian. Moreover,
		$\bar\tau(\one)=\tau(\one)<\infty$. Hence the strengthened
		assertion in
		Theorem~\ref{thm:classification}\textup{(ii)} yields
		$X_1,X_2,X_3\in(\M_\tau)^{++}$ such that
		\[
		\Delta
		:=
		d_t^{\bar\tau}(X_1,X_2)
		-d_t^{\bar\tau}(X_1,X_3)
		-d_t^{\bar\tau}(X_3,X_2)
		>0.
		\]
		
By Kaplansky's density theorem and the strong continuity of bounded
continuous functional calculus
\cite[Theorems~4.3.2 and~4.3.3]{Murphy}, for each
$i\in\{1,2,3\}$ there exists a directed net
$(a_{i,\lambda_i})_{\lambda_i\in\Lambda_i}\subseteq\A_+$ such that
\[
0\leq a_{i,\lambda_i}\leq\|X_i\|\one,
\qquad
a_{i,\lambda_i}\longrightarrow X_i
\quad\text{strongly}.
\]
		Passing to the product directed set
		\[
		\Lambda:=\Lambda_1\times\Lambda_2\times\Lambda_3,
		\]
		we may regard these as three nets
		$(a_{i,\lambda})_{\lambda\in\Lambda}$ with a common index set.
		
		Fix $i,j$. Since the nets are uniformly bounded, continuous
		functional calculus, or equivalently polynomial approximation on a
		common compact interval, gives
		\[
		a_{i,\lambda}^{t/4}\longrightarrow X_i^{t/4},
		\qquad
		a_{j,\lambda}^{t/2}\longrightarrow X_j^{t/2}
		\quad\text{strongly}.
		\]
		Multiplication is jointly strongly continuous on uniformly bounded
		sets, and hence
		\[
		Y_{ij,\lambda}
		:=
		a_{i,\lambda}^{t/4}
		a_{j,\lambda}^{t/2}
		a_{i,\lambda}^{t/4}
		\longrightarrow
		Y_{ij}
		:=
		X_i^{t/4}X_j^{t/2}X_i^{t/4}
		\quad\text{strongly}.
		\]
		The positive operators $Y_{ij,\lambda}$ are uniformly bounded.
		Applying continuous functional calculus once more, now to
		$x\mapsto x^{1/t}$, yields
		\[
		Y_{ij,\lambda}^{1/t}
		\longrightarrow
		Y_{ij}^{1/t}
		\quad\text{strongly}.
		\]
		
		Uniformly bounded strong convergence implies ultraweak convergence.
		Since $\bar\tau$ is normal,
		\[
		\bar\tau(a_{i,\lambda})
		\longrightarrow
		\bar\tau(X_i)
		\]
		and
		\[
		Q_t^\tau(a_{i,\lambda},a_{j,\lambda})
		=
		Q_t^{\bar\tau}(a_{i,\lambda},a_{j,\lambda})
		\longrightarrow
		Q_t^{\bar\tau}(X_i,X_j).
		\]
		Consequently,
		\[
		d_t^\tau(a_{i,\lambda},a_{j,\lambda})
		\longrightarrow
		d_t^{\bar\tau}(X_i,X_j)
		\]
		for every $i,j$. It follows that
		\[
		\begin{aligned}
			&d_t^\tau(a_{1,\lambda},a_{2,\lambda})
			-d_t^\tau(a_{1,\lambda},a_{3,\lambda})
			-d_t^\tau(a_{3,\lambda},a_{2,\lambda})\\
			&\hspace{35mm}\longrightarrow\Delta>0.
		\end{aligned}
		\]
		Thus, for every sufficiently large $\lambda$, the strict triangle
		defect persists in $\A_+$. Hence $d_t^\tau$ is not a metric on
		$\A_+$.
\end{proof}
	
\begin{remark}
	Applying Theorem~\ref{thm:classification} to
	$B(\mathcal H)$ with its canonical trace shows that, if
	$\dim\mathcal H\ge2$, then $d_t$ is a complete metric on the
	positive trace-class cone $\mathcal S_1(\mathcal H)_+$ exactly when
	$1\le t\le2$. If $\dim\mathcal H=1$, then every $t>0$ yields the
	Hellinger metric.
\end{remark}
	
	\section{The known logarithmic endpoint obstruction}
	\label{sec:zero-endpoint}
	
	At $t=0$, the family is defined by the symmetrized Lie--Trotter limit
	rather than by the power formula. For $A,B\in M_n(\C)^{++}$, set
	\begin{equation}\label{eq:zero-affinity-distance}
		\begin{aligned}
			Q_0^{\Tr}(A,B)
			&:=\Tr\exp\!\left(\frac{\log A+\log B}{2}\right),\\
			d_0^{\Tr}(A,B)^2
			&:=\frac{\Tr A+\Tr B}{2}-Q_0^{\Tr}(A,B).
		\end{aligned}
	\end{equation}
	Indeed, the symmetrized Lie--Trotter formula gives
	\[
	\left(A^{t/4}B^{t/2}A^{t/4}\right)^{1/t}
	\longrightarrow
	\exp\!\left(\frac{\log A+\log B}{2}\right)
	\qquad(t\downarrow0).
	\]
The radicand defining $d_0^{\Tr}$ in
\eqref{eq:zero-affinity-distance} is nonnegative. Indeed, the
Golden--Thompson inequality
\cite[Eq.~(21)]{BhatiaGaubertJain}, followed by the
Hilbert--Schmidt Cauchy--Schwarz and arithmetic--geometric mean
inequalities, gives
\[
Q_0^{\Tr}(A,B)
\leq \Tr(A^{1/2}B^{1/2})
\leq \sqrt{\Tr A\,\Tr B}
\leq \frac{\Tr A+\Tr B}{2}.
\]
	We keep this endpoint separate from the semifinite $L^1$-theory, since
	the logarithms in \eqref{eq:zero-affinity-distance} are not available for
	arbitrary noninvertible $L^1$-densities.
	
The failure of the triangle inequality at this endpoint was already proved
by Bhatia, Gaubert, and Jain \cite[Section~2]{BhatiaGaubertJain}.  Their log-Euclidean distance
	\[
	\delta_{\rm LE}(A,B)
	:=
	\left[
	\Tr(A+B)-2\Tr\exp\!\left(\frac{\log A+\log B}{2}\right)
	\right]^{1/2}
	\]
	satisfies $\delta_{\rm LE}=\sqrt2\,d_0^{\Tr}$, and their explicit
	positive definite $2\times2$ example has a strict triangle defect
	\cite[Section~2]{BhatiaGaubertJain}.
	
	The known $2\times2$ log-Euclidean example extends to every matrix size.
	
	\begin{proposition}
		\label{prop:zero-endpoint}
		For every integer $n\ge2$, there exist
		$A,B,C\in M_n(\C)^{++}$ such that
		\begin{equation}\label{eq:zero-strict-defect}
			d_0^{\Tr}(A,B)
			>d_0^{\Tr}(A,C)+d_0^{\Tr}(C,B).
		\end{equation}
		Thus the log-Euclidean endpoint fails the triangle inequality already on
		the positive definite cone.
	\end{proposition}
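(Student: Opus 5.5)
The plan is to start from the known $2\times2$ example of Bhatia, Gaubert, and Jain and lift it to $M_n(\C)$ by a direct-sum padding. This is the same device used at the end of the proof of Theorem~\ref{thm:classification}\textup{(ii)}.

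\emph{Step 1: the base case $n=2$.} Let $A_0,B_0,C_0\in M_2(\C)^{++}$ be the explicit positive definite triple from \cite[Section~2]{BhatiaGaubertJain}, which satisfies
\[
\delta_{\rm LE}(A_0,B_0)>\delta_{\rm LE}(A_0,C_0)+\delta_{\rm LE}(C_0,B_0).
\]
Since $\delta_{\rm LE}=\sqrt2\,d_0^{\Tr}$, dividing by $\sqrt2$ gives \eqref{eq:zero-strict-defect} for $n=2$. I would recompute their defect numerically from \eqref{eq:zero-affinity-distance}, to confirm that the normalizations match and the inequality is strict.

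\emph{Step 2: padding to size $n$.} For $n\ge3$, set
\[
A:=A_0\oplus I_{n-2},\qquad B:=B_0\oplus I_{n-2},\qquad C:=C_0\oplus I_{n-2}.
\]
These lie in $M_n(\C)^{++}$. Logarithms respect block-diagonal direct sums, and $\log I_{n-2}=0$, so
\[
\exp\!\left(\frac{\log A+\log B}{2}\right)
=\exp\!\left(\frac{\log A_0+\log B_0}{2}\right)\oplus I_{n-2}.
\]
Hence $Q_0^{\Tr}(A,B)=Q_0^{\Tr}(A_0,B_0)+(n-2)$. Likewise $\tfrac12(\Tr A+\Tr B)$ exceeds the $2\times2$ value by exactly $n-2$. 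The two contributions of $n-2$ cancel, so $d_0^{\Tr}(A,B)=d_0^{\Tr}(A_0,B_0)$. The same identity holds for the pairs $(A,C)$ and $(C,B)$, and the strict defect transfers unchanged.

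\emph{Main obstacle.} The only substantive step is the base case, which rests on the cited $2\times2$ example. If I needed a self-contained verification, I would try commuting-free perturbations of a Clarkson-type configuration, in the spirit of Proposition~\ref{prop:matrix-obstructions}. An alternative is a continuity argument: $d_t^{\Tr}\to d_0^{\Tr}$ as $t\downarrow0$ on $M_2(\C)^{++}$ by the symmetrized Lie--Trotter formula. If the Case~1 defects of Proposition~\ref{prop:matrix-obstructions} stayed bounded below as $t\downarrow0$, they would pass to $t=0$. That uniformity is not evident, however, so I would rely on the explicit example of \cite{BhatiaGaubertJain}. The padding step is routine.
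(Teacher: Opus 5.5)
Your proposal is correct and follows the paper's proof: take the Bhatia--Gaubert--Jain $2\times2$ positive definite example, divide by $\sqrt2$, and pad all three matrices with a common scalar block. That block contributes the same amount to $\tfrac12(\Tr A+\Tr B)$ and to $Q_0^{\Tr}$, so it cancels in every squared distance. The paper pads with an arbitrary $\lambda I_{n-2}$ with $\lambda>0$, and your choice $\lambda=1$ is a special case.
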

	
	\begin{proof}
		For $n=2$, this is the log-Euclidean example of
		\cite[Section~2]{BhatiaGaubertJain}, after division by $\sqrt2$.
		For $n>2$, take direct sums of the three matrices with the same block
		$\lambda I_{n-2}$, where $\lambda>0$. Functional calculus respects
		direct sums, and the common block contributes zero to each squared
		distance.
	\end{proof}
	
	We can now state the resulting resolution of
Problem~2 in \cite{KomalovicsMolnar}.
	
	\begin{corollary}
		\label{cor:KM-problem2}
		Combining the preceding known endpoint obstruction with the
		positive-parameter results of this paper, the three questions in
		\cite[Problem~2]{KomalovicsMolnar} have affirmative answers. More precisely:
		\begin{enumerate}
			\item for every $n\ge2$, $d_t^{\Tr}$ is a metric on
			$M_n(\C)_+$ whenever $1<t<2$;
			\item for every $n\ge2$, it is not a metric whenever $0<t<1$, and
			at $t=0$ the log-Euclidean formula fails the triangle inequality
			already on $M_n(\C)^{++}$; hence no extension of that formula to
			$M_n(\C)_+$ can be a metric;
			\item for every unital $C^*$-algebra with a faithful tracial positive
			linear functional $\tau$, $d_t^\tau$ is a metric on $\A_+$
			whenever $1<t<2$.
		\end{enumerate}
	\end{corollary}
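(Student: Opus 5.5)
The corollary collects earlier results, so I would prove its three items in order, each by citing the matching result.

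For item (1), fix $n\ge2$ and $1<t<2$. I would apply Theorem~\ref{thm:classification}\textup{(i)} to $\M=M_n(\C)$ with $\tau=\Tr$. This trace is faithful, normal and finite, and $L^1(M_n(\C),\Tr)_+=M_n(\C)_+$, so $d_t^{\Tr}$ is a metric on $M_n(\C)_+$ (indeed a complete one). Alternatively, Corollary~\ref{cor:finite-realization} with $\N=M_n(\C)$ gives the triangle inequality directly. For separation, Corollary~\ref{cor:finite-realization} alone is not enough, and I would use the estimate \eqref{eq:FvdG-bound} from the proof of part (i).

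For item (2), fix $n\ge2$ and $0<t<1$. Since $M_n(\C)$ is nonabelian, Theorem~\ref{thm:classification}\textup{(ii)} shows that $d_t^{\Tr}$ is not a metric on $M_n(\C)_+$. If one wants an explicit positive definite witness, I would take the $2\times2$ triple from Proposition~\ref{prop:matrix-obstructions} and embed it in $M_n(\C)^{++}$ by adding a common direct summand $\lambda I_{n-2}$. This summand adds the same amount $\lambda(n-2)$ to both $\tfrac12(\Tr A+\Tr B)$ and $Q_t^{\Tr}(A,B)$, so every distance is unchanged, exactly as in the proof of Proposition~\ref{prop:zero-endpoint}.

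At $t=0$, Proposition~\ref{prop:zero-endpoint} gives $A,B,C\in M_n(\C)^{++}$ with a strict triangle defect for $d_0^{\Tr}$. Any extension of $d_0^{\Tr}$ to $M_n(\C)_+$ agrees with it on this triple, so it also fails the triangle inequality and cannot be a metric.

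For item (3), let $\A$ be a unital $C^*$-algebra with a faithful tracial positive linear functional $\tau$, and let $1<t<2$. I would invoke Theorem~\ref{thm:classification}\textup{(iii)}, which gives metricity of $d_t^\tau$ on $\A_+$ for all $t\in[1,2]$. That result rests on restricting to the finite von Neumann closure $\pi_\tau(\A)''$ and applying part (i).

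None of the steps is a real obstacle. The only point needing care is matching the normalizations in \cite[Problem~2]{KomalovicsMolnar} with ours: their $d_p^\tau$ is our $d_t^\tau$, and their Bures distance is $\sqrt2\,d_2^\tau$. Positive rescalings do not change whether a function is a metric, so the answers transfer directly.
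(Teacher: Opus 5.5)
Your proposal is correct and matches the paper's proof: each item is read off from Theorem~\ref{thm:classification}, with parts (i)/(ii) for the matrix assertions and part (iii) for the $C^*$-algebra assertion, and the $t=0$ case is taken from Proposition~\ref{prop:zero-endpoint}. Citing part (i) instead of part (ii) for item (1) makes no difference, since (ii) obtains the $[1,2]$ range from (i); your direct-sum witness in $M_n(\C)^{++}$ and your normalization check are valid extras.
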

	
	\begin{proof}
		The positive-parameter matrix assertions follow from
		Theorem~\ref{thm:classification}\textup{(ii)}; the endpoint assertion is the result of
		Bhatia, Gaubert, and Jain recorded in
		Proposition~\ref{prop:zero-endpoint}; and the general $C^*$-algebra
		assertion is Theorem~\ref{thm:classification}\textup{(iii)}.
	\end{proof}
	
	\section{Further consequences: data processing and metric completions}
	\label{sec:cores}

	\subsection{Data processing}
	
		We recall the quantity $Q_{\alpha,z}$ appearing in the definition
		of the $\alpha$-$z$ R\'enyi divergence.
		For $A,B\in L^1(\mathcal M,\tau)_+$, let
		\[
		\varphi_A(x):=\tau(Ax),\qquad
		\varphi_B(x):=\tau(Bx),\qquad x\in\mathcal M,
		\]
		be the associated normal positive functionals.
		For $0<\alpha<1$, $z>0$, and $B\ne0$, the quantity
		$Q_{\alpha,z}$ in \cite[Definition~2.1]{HiaiJencova}
		has the following trace-density expression:
		\[
		Q_{\alpha,z}(\varphi_B\Vert\varphi_A)
		=
		\tau\!\left(
		\left(
		A^{(1-\alpha)/(2z)}
		B^{\alpha/z}
		A^{(1-\alpha)/(2z)}
		\right)^z
		\right).
		\]
		This semifinite tracial realization is explained at the beginning
		of \cite[Section~5.1]{HiaiJencova}. No trace-one normalization
		of $A$ or $B$ is assumed.
		
		In particular, taking $\alpha=1/2$ and $z=1/t$ gives
		\[
		Q_{1/2,1/t}(\varphi_B\Vert\varphi_A)
		=
		\tau\!\left(
		\left(
		A^{t/4}B^{t/2}A^{t/4}
		\right)^{1/t}
		\right)
		=
		Q_t^\tau(A,B).
		\]
		We use this identification and the data-processing inequality
		for $D_{\alpha,z}$ to obtain the following contractivity statement.

	\begin{proposition}
		\label{prop:channel-contraction}
		Let $(\M,\tau_\M)$ and $(\N,\tau_\N)$ be von Neumann algebras
		equipped with faithful normal semifinite traces, and let
		$\Gamma:\N\to\M$ be a normal positive unital map. Identify the
		positive preduals with the corresponding $L^1$-cones, and let
		$\Gamma_*:L^1(\M,\tau_\M)_+\to L^1(\N,\tau_\N)_+$
		be the predual map. Then, for $1\le t\le2$
		and all $A,B\in L^1(\M,\tau_\M)_+$,
		\begin{equation}\label{eq:channel-contraction}
			d_t^{\tau_\N}(\Gamma_*A,\Gamma_*B)
			\le d_t^{\tau_\M}(A,B).
		\end{equation}
		In particular, Schr\"odinger-picture normal completely positive
		trace-preserving maps contract $d_t$.
	\end{proposition}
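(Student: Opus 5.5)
The strategy is to reduce \eqref{eq:channel-contraction} to monotonicity of the quantity $Q_{1/2,1/t}$ under positive unital normal maps. Once that is in hand, we use that $\Gamma_*$ preserves traces.

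\emph{Trace preservation.} Since $\Gamma$ is unital and $\Gamma_*$ is its predual map,
\[
\tau_\N(\Gamma_*A)=\tau_\M(A\,\Gamma(\one))=\tau_\M(A),
\]
and likewise for $B$. The arithmetic term $\bigl(\tau(A)+\tau(B)\bigr)/2$ in $d_t^2$ is therefore unchanged. Because
\[
d_t^2=\frac{\tau(A)+\tau(B)}2-Q_t,
\]
it suffices to prove
\[
Q_t^{\tau_\M}(A,B)\le Q_t^{\tau_\N}(\Gamma_*A,\Gamma_*B),\qquad 1\le t\le2 .
\]

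\emph{Rewriting as $Q_{\alpha,z}$.} With the identification recalled above, $Q_t^\tau(A,B)=Q_{1/2,1/t}(\varphi_B\Vert\varphi_A)$. The predual correspondence gives $\varphi_{\Gamma_*A}=\varphi_A\circ\Gamma$, and similarly for $B$. The needed inequality is therefore
\[
Q_{1/2,z}(\varphi_B\circ\Gamma\Vert\varphi_A\circ\Gamma)\ge Q_{1/2,z}(\varphi_B\Vert\varphi_A),\qquad z=1/t\in[1/2,1].
\]
For $0<\alpha<1$, the divergence $D_{\alpha,z}$ is $(\alpha-1)^{-1}\log$ of $Q_{\alpha,z}$, suitably normalized. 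Data processing for $D_{\alpha,z}$ under $\Gamma$ is thus the same as an increase of $Q_{\alpha,z}$, since $\alpha-1<0$. The normalization factor is the $\varphi_B$-mass or $\varphi_A$-mass, which is preserved because $\Gamma$ is unital.

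\emph{Admissibility of parameters.} I will invoke the data-processing theorem of Hiai--Jen\v{c}ov\'a for $D_{\alpha,z}$ in the von Neumann setting. I expect this step to be the main obstacle. We must check that $(\alpha,z)=(1/2,z)$ with $1/2\le z\le1$ lies in their admissible region, namely $\max(\alpha,1-\alpha)\le z\le\min(1,\ldots)$ for $\alpha<1$; here this reads $1/2\le z\le 1$, which holds exactly. We must also check the positivity hypothesis: their result is stated for positive, not necessarily completely positive, normal unital maps in this region, so plain positivity of $\Gamma$ suffices.

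\emph{Degenerate cases.} The cases $A=0$ or $B=0$ need separate treatment, since $Q_{\alpha,z}$ is defined for $B\neq0$. If either vanishes, both sides of the inequality for $Q_t$ are zero, so nothing needs proving.

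The final sentence of the proposition is then immediate: a normal completely positive trace-preserving map in the Schr\"odinger picture is the predual of a normal completely positive unital map, which in particular is positive.
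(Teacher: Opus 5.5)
Your proposal is correct and follows the paper's proof essentially step for step. The common steps are mass preservation from unitality, the identification $Q_t^\tau(A,B)=Q_{1/2,1/t}(\varphi_B\Vert\varphi_A)$, and the Hiai--Jen\v{c}ov\'a data-processing inequality at $\alpha=1/2$, $z=1/t\ge1/2$, turned into monotonicity of $Q_t$ via the decreasing map $x\mapsto-2\log x$. Two cosmetic points: for $0<\alpha<1$ the admissible region is simply $z\ge\max\{\alpha,1-\alpha\}$, with no upper bound (harmless here), and the paper isolates the whole case $Q_t^{\tau_\M}(A,B)=0$, not only $A=0$ or $B=0$ (trivial anyway, since then one only needs $Q_t^{\tau_\N}(\Gamma_*A,\Gamma_*B)\ge0$).
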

	
	\begin{proof}
		
			Fix $1\le t\le2$ and $A,B\in L^1(\M,\tau_\M)_+$, and let
			$\varphi_A,\varphi_B$ be the associated normal positive
			functionals on $\M$.
		
		Under the canonical identification
		$L^1(\M,\tau_\M)_+\cong\M_*^+$, the element
		$\Gamma_*A$ is characterized by
		\[
		\varphi_{\Gamma_*A}=\varphi_A\circ\Gamma.
		\]
		Since $\Gamma$ is unital,
		\[
		\tau_\N(\Gamma_*A)
		=(\varphi_A\circ\Gamma)(\one_\N)
		=\varphi_A(\one_\M)
		=\tau_\M(A),
		\]
		and similarly for $B$.
		
		If $Q_t^{\tau_\M}(A,B)=0$,
		the nonnegativity of
			$Q_t^{\tau_\N}(\Gamma_*A,\Gamma_*B)$
		and preservation of mass give
		\[
		\begin{aligned}
			d_t^{\tau_\N}(\Gamma_*A,\Gamma_*B)^2
			&\le
			\frac{\tau_\N(\Gamma_*A)+\tau_\N(\Gamma_*B)}2\\
			&=
			\frac{\tau_\M(A)+\tau_\M(B)}2
			=
			d_t^{\tau_\M}(A,B)^2.
		\end{aligned}
		\]
		This includes the cases in which one input is zero.
		Henceforth assume $Q_t^{\tau_\M}(A,B)>0$; in particular,
		$A$ and $B$ are nonzero.
		
		Set
		\[
		\alpha:=\frac12,\qquad z:=\frac1t.
		\]
		Since $1\le t\le2$, we have
		\[
		z\ge\frac12=\max\{\alpha,1-\alpha\},
		\]
		so the data-processing inequality in
		\cite[Theorem~3.3(i)]{HiaiJencova}
		applies.
		
			By the identification above,
			\[
			Q_{1/2,1/t}(\varphi_B\Vert\varphi_A)
			=
			Q_t^{\tau_\M}(A,B),
			\]
		
		and similarly
		\[
		Q_{1/2,1/t}
		(\varphi_B\circ\Gamma\Vert\varphi_A\circ\Gamma)
		=
		Q_t^{\tau_\N}(\Gamma_*A,\Gamma_*B).
		\]
		
		The data-processing inequality states that
		\[
		D_{1/2,1/t}
		(\varphi_B\circ\Gamma\Vert\varphi_A\circ\Gamma)
		\le
		D_{1/2,1/t}(\varphi_B\Vert\varphi_A).
		\]
		
			By \cite[Definition~2.1]{HiaiJencova}, for normal positive
			functionals $\psi,\varphi$ with $\psi\ne0$,
		
		\[
		D_{1/2,1/t}(\psi\Vert\varphi)
		=
		-2\log
		\frac{Q_{1/2,1/t}(\psi\Vert\varphi)}{\psi(\one)},
		\]
		
			with the convention $-\log0=+\infty$.
			Since $Q_t^{\tau_\M}(A,B)>0$, the right-hand side of the
			data-processing inequality is finite. Consequently,
			\[
			Q_{1/2,1/t}
			(\varphi_B\circ\Gamma\Vert\varphi_A\circ\Gamma)>0.
			\]
		
		Since
		\[
		(\varphi_B\circ\Gamma)(\one_\N)
		=
		\varphi_B(\one_\M)>0,
		\]
		
			the two logarithmic expressions have the same positive
			denominator. As $x\mapsto-2\log x$ is strictly decreasing
			on $(0,\infty)$, the data-processing inequality is equivalent to
		
		\[
		Q_t^{\tau_\N}(\Gamma_*A,\Gamma_*B)
		\ge
		Q_t^{\tau_\M}(A,B).
		\]
		Therefore,
		\begin{align*}
			d_t^{\tau_\N}(\Gamma_*A,\Gamma_*B)^2
			&=
			\frac{\tau_\N(\Gamma_*A)+\tau_\N(\Gamma_*B)}2
			-
			Q_t^{\tau_\N}(\Gamma_*A,\Gamma_*B)\\
			&\le
			\frac{\tau_\M(A)+\tau_\M(B)}2
			-
			Q_t^{\tau_\M}(A,B)\\
			&=
			d_t^{\tau_\M}(A,B)^2.
		\end{align*}
		Taking square roots proves
		\eqref{eq:channel-contraction}.
	\end{proof}

	\subsection{Finite-support and \texorpdfstring{$C^*$}{C*-algebraic} cores}
	
	Let
	\begin{equation}\label{eq:finite-support-core}
		\mathfrak f_{\tau,+}
		:=\{A\in\M_+:\tau(s(A))<\infty\},
	\end{equation}
	where $s(A)$ is the support projection of $A$.
	
	The finite-support cone is a metric core, including on every trace level.
	
	\begin{proposition}
		\label{prop:finite-support-core}
		For $1\le t\le2$, the cone $\mathfrak f_{\tau,+}$ is dense in
		$L^1(\M,\tau)_+$ for both $\|\cdot\|_1$ and $d_t$. Hence its metric
		completion is the full noncommutative $L^1$-cone. The state-density space
		\begin{equation}\label{eq:state-space}
			\mathsf S_1(\M,\tau)
			:=\{\rho\in L^1(\M,\tau)_+:\tau(\rho)=1\}
		\end{equation}
		is closed and complete, and its bounded finite-support states are dense.
	\end{proposition}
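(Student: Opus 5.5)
The plan is to approximate an arbitrary $A\in L^1(\M,\tau)_+$ in $\|\cdot\|_1$ by elements of $\mathfrak f_{\tau,+}$, and then transfer everything to $d_t$ using the comparison estimates from the proof of Theorem~\ref{thm:classification}\textup{(i)}. For the approximation, the natural choice is the spectral truncation already used in the semifinite triangle-inequality argument. With $e_n:=\one_{[1/n,n]}(A)$, set $A_n:=Ae_n=e_nAe_n$. We have $0\le A_n\le ne_n$, and $\tau(e_n)\le n\tau(A)<\infty$. Since $s(A_n)\le e_n$, this gives $A_n\in\mathfrak f_{\tau,+}$. The estimate proved there, applied with $S=X=A$, yields $\|A-A_n\|_1\le 2\|A^{1/2}\|_2\,\|(\one-e_n)A^{1/2}\|_2\to0$ by normality of $\tau$, because $e_n\uparrow s(A)$. (Alternatively, $A-A_n=A\one_{(0,1/n)\cup(n,\infty)}(A)$ is positive and its trace tends to $0$ by monotone convergence.) This gives $\|\cdot\|_1$-density.

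For $d_t$-density I will invoke \eqref{eq:Powers-Stormer-bound} together with $d_t\le d_1$ from \eqref{eq:endpoint-bounds}. These give $d_t(A_n,A)^2\le\tfrac12\|A_n-A\|_1\to0$. Since Theorem~\ref{thm:classification}\textup{(i)} shows that $(L^1(\M,\tau)_+,d_t)$ is complete, and $\mathfrak f_{\tau,+}$ is dense in it, the completion of $(\mathfrak f_{\tau,+},d_t)$ is canonically isometric to the full $L^1$-cone. This uses the standard uniqueness of metric completions: a dense subset of a complete metric space has that space as its completion.

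For the state space, $\tau$ is $\|\cdot\|_1$-continuous, so $\mathsf S_1(\M,\tau)$ is $L^1$-closed. Since the topologies agree by part (i), it is also $d_t$-closed. A closed subset of a complete metric space is complete, and the same holds in the $L^1$-norm. For density of bounded finite-support states, fix $\rho\in\mathsf S_1$ with truncations $\rho_n$ as above. Then $\tau(\rho_n)\to1$, so $\tau(\rho_n)>0$ for large $n$, and we can normalize to $\tilde\rho_n:=\rho_n/\tau(\rho_n)$. These lie in $\mathfrak f_{\tau,+}\cap\mathsf S_1$, and
\[
\|\tilde\rho_n-\rho\|_1\le\|\rho_n-\rho\|_1+|1-\tau(\rho_n)|\,\|\tilde\rho_n\|_1\to0 .
\]
Applying \eqref{eq:Powers-Stormer-bound} again gives $d_t$-convergence.

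The only step needing care is the phrase ``including on every trace level''. If this is meant as density within each slice $\{\tau=\lambda\}$, the same normalization argument with factor $\lambda/\tau(A_n)$ handles it, with the case $\lambda=0$ trivial. Otherwise the argument is routine, and I expect no real obstacle beyond checking that $A_n$ is bounded, which follows from $A_n\le ne_n$.
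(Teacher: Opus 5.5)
Your proposal is correct and follows the paper's proof essentially step for step. It uses the same spectral truncation $A_n=Ae_n$ with $e_n=\one_{[1/n,n]}(A)$, the same transfer to $d_t$ via $d_t\le d_1$ and \eqref{eq:Powers-Stormer-bound}, and the same normalization and closedness arguments for $\mathsf S_1(\M,\tau)$. The paper justifies $\|A-A_n\|_1\to0$ exactly by your parenthetical alternative: $0\le A_n\le A$ together with normality of $\tau$.
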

	
 \begin{proof}
		Let $A\in L^1(\M,\tau)_+$, and set
		\[
		e_n:=\one_{[1/n,n]}(A),
		\qquad
		A_n:=Ae_n=e_nAe_n.
		\]
		Since
		\[
		\frac1n e_n\leq A_n\leq ne_n,
		\]
		the operator $A_n$ is bounded and $s(A_n)=e_n$. Moreover,
		\[
		\frac1n\tau(e_n)
		\leq\tau(A_n)
		\leq\tau(A),
		\]
		and hence
		\[
		\tau(s(A_n))
		=\tau(e_n)
		\leq n\tau(A)<\infty.
		\]
		Thus $A_n\in\mathfrak f_{\tau,+}$.
		
		The projections $e_n$ increase to $s(A)$, and
		$0\leq A_n\leq A$. Therefore, by normality of $\tau$,
		\[
		\|A-A_n\|_1
		=
		\tau(A-A_n)
		\longrightarrow0.
		\]
		Using \eqref{eq:endpoint-bounds} and
		\eqref{eq:Powers-Stormer-bound}, we obtain
		\[
		d_t(A_n,A)^2
		\leq d_1(A_n,A)^2
		\leq\frac12\|A_n-A\|_1
		\longrightarrow0.
		\]
		Hence $\mathfrak f_{\tau,+}$ is dense in
		$L^1(\M,\tau)_+$ for both $\|\cdot\|_1$ and $d_t$.
		Since $L^1(\M,\tau)_+$ is complete with respect to $d_t$ by
		Theorem~\ref{thm:classification}\textup{(i)}, it is the metric
		completion of $\mathfrak f_{\tau,+}$.
		
		Now suppose that $\tau(A)=1$. Then
		\[
		\tau(A_n)\longrightarrow1.
		\]
		For all sufficiently large $n$, define
		\[
		\rho_n:=\frac{A_n}{\tau(A_n)}.
		\]
		Each $\rho_n$ is a bounded finite-support state, and
		\begin{align*}
			\|\rho_n-A\|_1
			&\leq
			\left\|
			\frac{A_n}{\tau(A_n)}-A_n
			\right\|_1
			+\|A_n-A\|_1\\
			&=
			|1-\tau(A_n)|
			+\|A_n-A\|_1
			\longrightarrow0.
		\end{align*}
		Consequently, $d_t(\rho_n,A)\to0$.
		
		Finally, the trace functional is continuous on $L^1(\M,\tau)$;
		therefore $\mathsf S_1(\M,\tau)$ is $L^1$-closed. Since the
		$L^1$- and $d_t$-topologies coincide and
		$L^1(\M,\tau)_+$ is $d_t$-complete by
		Theorem~\ref{thm:classification}\textup{(i)}, the state-density
		space $\mathsf S_1(\M,\tau)$ is closed and complete.
\end{proof}

	Let $(\M,\tau)$ be a von Neumann algebra equipped with a faithful
	normal semifinite trace, and let $\A\subseteq\M$ be a $C^*$-subalgebra
	such that the restriction of $\tau$ to $\A$ is densely defined and
	semifinite. Here densely defined means that
	\[
	\overline{\{a\in\A_+:\tau(a)<\infty\}}^{\,\|\cdot\|}
	=\A_+,
	\]
	where $\|\cdot\|$ is the $C^*$-norm; see
	\cite[Section~1.1]{KustermansVaesWeights}.

Define
	\begin{equation}\label{eq:Cstar-trace-ideals}
		\A_{\tau,+}:=\{a\in\A_+:\tau(a)<\infty\},
		\qquad
		\mathfrak n_\tau(\A):=\{a\in\A:\tau(a^*a)<\infty\}.
	\end{equation}
	When $\tau(\one)=\infty$, the natural metric domain is
	$\A_{\tau,+}$, not all of $\A_+$.
	
	Its completion is governed exactly by an $L^2$-density criterion.
	
	\begin{theorem}
		\label{thm:L2-core}
		Let $1\le t\le2$. The completion of $(\A_{\tau,+},d_t)$ is canonically
		identified with
		\begin{equation}\label{eq:Cstar-completion}
			\left(\overline{\A_{\tau,+}}^{\,\|\cdot\|_1},d_t\right)
			\subseteq L^1(\M,\tau)_+.
		\end{equation}
		It equals the full cone $L^1(\M,\tau)_+$ if and only if
		\begin{equation}\label{eq:L2-core-condition}
			\overline{\mathfrak n_\tau(\A)}^{\,\|\cdot\|_2}
			=L^2(\M,\tau).
		\end{equation}
		Condition \eqref{eq:L2-core-condition} is therefore the exact
		Hilbert-space density requirement for obtaining the full $L^1$-cone.
	\end{theorem}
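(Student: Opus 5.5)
The first assertion is a formal consequence of Theorem~\ref{thm:classification}\textup{(i)}. Since $d_t$ is a complete metric on $L^1(\M,\tau)_+$ and its topology and Cauchy sequences are those of $L^1$, the completion of any subset is its closure in $L^1(\M,\tau)_+$. Here we use that $L^1(\M,\tau)_+$ is $\|\cdot\|_1$-closed in $L^1$. Now $\A_{\tau,+}\subseteq L^1(\M,\tau)_+$, because $a\in\A_+$ with $\tau(a)<\infty$ is a bounded positive element of finite trace. So the completion is canonically $\overline{\A_{\tau,+}}^{\,\|\cdot\|_1}$ with the restricted $d_t$. On $\A_{\tau,+}$ the metric $d_t$ is already the one of Section~\ref{sec:semifinite}, because continuous and measurable functional calculus agree on bounded positive elements. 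It therefore remains to show
\[
\overline{\A_{\tau,+}}^{\,\|\cdot\|_1}=L^1(\M,\tau)_+
\quad\Longleftrightarrow\quad
\overline{\mathfrak n_\tau(\A)}^{\,\|\cdot\|_2}=L^2(\M,\tau).
\]

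For the implication from $L^2$-density, I would first reduce to positive square roots. Given $A\in L^1(\M,\tau)_+$, we have $A^{1/2}\in L^2$. By hypothesis choose $x_n\in\mathfrak n_\tau(\A)$ with $\|x_n-A^{1/2}\|_2\to0$, and set $a_n:=x_n^*x_n$. Then $a_n\in\A_+$ and $\tau(a_n)=\|x_n\|_2^2<\infty$, so $a_n\in\A_{\tau,+}$. Writing
\[
x_n^*x_n-A=(x_n-A^{1/2})^*x_n+A^{1/2}(x_n-A^{1/2})
\]
and applying H\"older gives $\|a_n-A\|_1\le\|x_n-A^{1/2}\|_2(\|x_n\|_2+\|A^{1/2}\|_2)\to0$.

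For the converse, assume $\A_{\tau,+}$ is $L^1$-dense in $L^1_+$ and take $\xi\in L^2(\M,\tau)$. Since $L^2$ is spanned by elements that are themselves $\|\cdot\|_2$-limits of bounded finite-support elements, and $\mathfrak n_\tau(\A)$ is a linear space (a left ideal of $\A$), it suffices to approximate $y\in L^2$ of a convenient form. The main tool is Powers--St\o rmer, \eqref{eq:Powers-Stormer-bound}: if $a_n\to A$ in $L^1$ with $a_n\in\A_{\tau,+}$, then $a_n^{1/2}\to A^{1/2}$ in $L^2$. Moreover $a_n^{1/2}\in\A$ with $\tau(a_n)<\infty$, so $a_n^{1/2}\in\mathfrak n_\tau(\A)$. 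Hence every positive $L^2$ element, being $A^{1/2}$ for $A=(A^{1/2})^2\in L^1_+$, lies in the $L^2$-closure of $\mathfrak n_\tau(\A)$. Every $\xi\in L^2$ decomposes as a linear combination of four positive elements of $L^2$: split into real and imaginary parts, which are self-adjoint elements of $L^2$, then into positive and negative parts by spectral calculus. This gives \eqref{eq:L2-core-condition}.

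The main obstacle I expect is bookkeeping rather than an analytic difficulty. One point is checking that the square root of $a\in\A_+$ stays in the $C^*$-algebra $\A$, which holds by continuous functional calculus, and that $\tau(a^{1/2}a^{1/2})=\tau(a)$. Another is making sure the $L^1$-closure of $\A_{\tau,+}$ lies in the positive cone, so that the completion is identified inside $L^1(\M,\tau)_+$; the latter is guaranteed because the positive cone of $L^1$ is norm closed. The hypotheses that $\tau|_\A$ is densely defined and semifinite are not strictly needed for the equivalence. They ensure $\A_{\tau,+}$ is nontrivial, which I would mention in a remark.
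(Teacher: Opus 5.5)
Your proposal is correct and follows essentially the same route as the paper. Both identify the completion through $d_t$-completeness of $L^1(\M,\tau)_+$ and the coincidence of the $d_t$- and $L^1$-topologies from Theorem~\ref{thm:classification}\textup{(i)}, prove sufficiency by the same H\"older estimate for $x_n^*x_n-A$, and prove necessity via the Powers--St\o rmer bound \eqref{eq:Powers-Stormer-bound} applied to $a_n^{1/2}\in\mathfrak n_\tau(\A)$, followed by linearity of $\mathfrak n_\tau(\A)$ and the decomposition of $L^2(\M,\tau)$ into four positive parts.
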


	\begin{proof}
		Set
		\[
		\mathcal C:=\overline{\A_{\tau,+}}^{\,\|\cdot\|_1}
		\subseteq L^1(\M,\tau)_+.
		\]
		By Theorem~\ref{thm:classification}\textup{(i)}, the
		$L^1$- and $d_t$-topologies on the positive cone coincide,
		and the cone is complete for $d_t$. Hence $\mathcal C$ is
		$d_t$-closed and complete, with $\A_{\tau,+}$ dense in it.
		The inclusion of $\A_{\tau,+}$ therefore extends uniquely to
		an isometry from its metric completion onto $\mathcal C$.
		This proves \eqref{eq:Cstar-completion}.
		
		Suppose first that \eqref{eq:L2-core-condition} holds.
		Given $A\in L^1(\M,\tau)_+$, set $h=A^{1/2}\in L^2(\M,\tau)_+$
		and choose $b_n\in\mathfrak n_\tau(\A)$ with
		$\|b_n-h\|_2\to0$. Then
		$b_n^*b_n\in\A_{\tau,+}$, and H\"older's inequality gives
		\[
		\begin{aligned}
			\|b_n^*b_n-A\|_1
			&=\|b_n^*(b_n-h)+(b_n^*-h)h\|_1\\
			&\leq(\|b_n\|_2+\|h\|_2)\|b_n-h\|_2
			\longrightarrow0.
		\end{aligned}
		\]
		Thus $\mathcal C=L^1(\M,\tau)_+$.
		
		Conversely, suppose that $\mathcal C=L^1(\M,\tau)_+$.
		For $h\in L^2(\M,\tau)_+$, choose $a_n\in\A_{\tau,+}$
		such that $\|a_n-h^2\|_1\to0$. Continuous functional calculus
		gives $a_n^{1/2}\in\A$; this remains valid for nonunital
		$\A$, since the square-root function vanishes at zero.
		Moreover,
		\[
		\tau\bigl((a_n^{1/2})^*a_n^{1/2}\bigr)=\tau(a_n)<\infty,
		\]
		so $a_n^{1/2}\in\mathfrak n_\tau(\A)$.
		The Powers--St\o rmer estimate \eqref{eq:Powers-Stormer-bound}
		yields
		\[
		\|a_n^{1/2}-h\|_2^2
		\leq\|a_n-h^2\|_1\longrightarrow0.
		\]
		Thus the $L^2$-closure of $\mathfrak n_\tau(\A)$ contains
		$L^2(\M,\tau)_+$. The space $\mathfrak n_\tau(\A)$ is
		linear: for $a,b\in\mathfrak n_\tau(\A)$,
		\[
		(a+b)^*(a+b)\leq2a^*a+2b^*b,
		\]
		and scalar multiplication also preserves finite $L^2$-norm.
		Finally, every $x\in L^2(\M,\tau)$ has the decomposition
		\[
		x=(\operatorname{Re}x)_+-(\operatorname{Re}x)_-
		+i\bigl((\operatorname{Im}x)_+-(\operatorname{Im}x)_-\bigr),
		\]
		whose four positive parts belong to $L^2(\M,\tau)_+$.
		Their complex linear span is therefore all of $L^2(\M,\tau)$,
		which proves \eqref{eq:L2-core-condition}.
	\end{proof}

	\subsection{Trace-GNS completion and faithful densities}

	For the trace-GNS von Neumann envelope, the $L^2$-density condition holds
	automatically.
	
	\begin{corollary}
		\label{cor:trace-GNS}

	Let $\tau$ be a faithful, densely defined, lower semicontinuous,
	semifinite trace on a $C^*$-algebra $\A$, where densely defined means
	that
	\[
	\overline{\A_{\tau,+}}^{\,\|\cdot\|}=\A_+,
	\]
	with $\|\cdot\|$ denoting the $C^*$-norm.
  Let
		\[
		\M_\tau:=\pi_\tau(\A)'',
		\]
		and let $\bar\tau$ be the canonical faithful normal semifinite extension.
		For every $1\le t\le2$, the metric completion of
		$(\A_{\tau,+},d_t^\tau)$ is canonically isometric to
		\begin{equation}\label{eq:trace-GNS-completion}
			\bigl(L^1(\M_\tau,\bar\tau)_+,d_t^{\bar\tau}\bigr).
		\end{equation}
		The completion of the trace-one part is
		\begin{equation}\label{eq:trace-GNS-states}
			\{\rho\in L^1(\M_\tau,\bar\tau)_+:\bar\tau(\rho)=1\}.
		\end{equation}
	\end{corollary}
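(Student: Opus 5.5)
The plan is to reduce Corollary~\ref{cor:trace-GNS} to Theorem~\ref{thm:L2-core} applied to the pair $\A\subseteq\M_\tau$ with trace $\bar\tau$. The completion statement \eqref{eq:Cstar-completion} then identifies the completion of $(\A_{\tau,+},d_t^\tau)$ with the $\|\cdot\|_1$-closure of $\A_{\tau,+}$ in $L^1(\M_\tau,\bar\tau)_+$. The work is therefore twofold: confirm that the hypotheses of that theorem hold, and verify the density condition \eqref{eq:L2-core-condition}.

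\textbf{Hypotheses.} I first use the standard trace-GNS facts.
\begin{itemize}
\item Since $\tau$ is faithful and lower semicontinuous, $\pi_\tau$ is injective, so we may identify $\A$ with $\pi_\tau(\A)$.
\item The canonical normal semifinite extension $\bar\tau$ restricts to $\tau$ on $\A_+$, and faithfulness of $\bar\tau$ is part of the hypothesis.
\end{itemize}
It follows that $d_t^\tau=d_t^{\bar\tau}$ on $\A_{\tau,+}$, since functional calculus is preserved under the identification. The restriction of $\bar\tau$ to $\A$ is densely defined and semifinite by assumption, so Theorem~\ref{thm:L2-core} applies.

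\textbf{Density.} The key step is showing that $\mathfrak n_\tau(\A)$ is $\|\cdot\|_2$-dense in $L^2(\M_\tau,\bar\tau)$. By construction, $L^2(\M_\tau,\bar\tau)$ is the GNS Hilbert space $H_\tau$, and the map $a\mapsto\Lambda_\tau(a)$ on $\mathfrak n_\tau(\A)$ corresponds to the embedding $\mathfrak n_\tau(\A)\hookrightarrow L^2$. Since $H_\tau$ is by definition the completion of $\Lambda_\tau(\mathfrak n_\tau(\A))$, density is essentially tautological.

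The main obstacle is making this identification honest. One must check that the unitary $H_\tau\cong L^2(\M_\tau,\bar\tau)$ sends $\Lambda_\tau(a)$ to $a$. The route I would take is the following.
\begin{itemize}
\item The map $x\mapsto\Lambda_{\bar\tau}(x)$ on $\mathfrak n_{\bar\tau}(\M_\tau)$ is the canonical identification of the $\bar\tau$-GNS space with $L^2(\M_\tau,\bar\tau)$.
\item The standard construction of $\bar\tau$ (e.g., via the semicyclic representation) shows that $\Lambda_\tau(\mathfrak n_\tau(\A))$ is already dense in the $\bar\tau$-GNS space, equivalently that $\mathfrak n_\tau(\A)$ is a $\sigma$-strong$^*$/$\Lambda_{\bar\tau}$-core for $\mathfrak n_{\bar\tau}$.
\end{itemize}
If a direct citation is not available, I would instead argue via Kaplansky density as in the proof of Theorem~\ref{thm:classification}(iii), approximating elements of $\mathfrak n_{\bar\tau}$ in $\|\cdot\|_2$ by bounded nets from $\mathfrak n_\tau(\A)$. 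This approximation is where care is needed, because strong convergence alone does not give $L^2$ convergence without a trace bound.

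\textbf{Conclusion.} With \eqref{eq:L2-core-condition} established, Theorem~\ref{thm:L2-core} gives $\overline{\A_{\tau,+}}^{\|\cdot\|_1}=L^1(\M_\tau,\bar\tau)_+$, which yields \eqref{eq:trace-GNS-completion}.

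For the trace-one part:
\begin{itemize}
\item The state space $\mathsf S_1(\M_\tau,\bar\tau)$ is closed in the complete cone, by Proposition~\ref{prop:finite-support-core}.
\item Given $\rho$ with $\bar\tau(\rho)=1$, choose $a_n\in\A_{\tau,+}$ with $a_n\to\rho$ in $L^1$. Then $\tau(a_n)\to1$, and the normalized elements $a_n/\tau(a_n)$ lie in the trace-one part of $\A_{\tau,+}$ and converge to $\rho$, by the same estimate as in that proposition.
\end{itemize}
Since the $d_t$ and $L^1$ topologies coincide by Theorem~\ref{thm:classification}(i), this gives \eqref{eq:trace-GNS-states}.
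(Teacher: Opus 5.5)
Your proposal follows essentially the same route as the paper. Both identify $\A$ with $\pi_\tau(\A)\subseteq\M_\tau$, verify the $L^2$-density condition \eqref{eq:L2-core-condition} through the GNS unitary $\mathcal H_\tau\to L^2(\M_\tau,\bar\tau)$, $\Lambda_\tau(a)\mapsto\pi_\tau(a)$, apply Theorem~\ref{thm:L2-core}, and normalize approximants to handle the trace-one shell. The paper settles the identification you flag as the main obstacle with the citation you anticipate (Kustermans--Vaes, Proposition~2.16 and its proof, for the $W^*$-lift of $\tau$ viewed as a KMS weight), so your Kaplansky fallback is not needed. The paper also derives from the same source that $\bar\tau$ is faithful, normal, semifinite and, through its trivial modular group, a trace; you take these from the statement's wording or leave them implicit.
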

	
  \begin{proof}
		Let
		\[
		(\mathcal H_\tau,\pi_\tau,\Lambda_\tau)
		\]
		be the GNS construction associated with $\tau$. Since $\tau$
		is faithful, $\pi_\tau$ is faithful, and we identify $\A$
		with its image $\pi_\tau(\A)\subseteq\M_\tau$.
		
	The case $\A=\{0\}$ is immediate, so assume $\A\ne\{0\}$.
	The trace $\tau$ is a KMS weight with modular group
	$\sigma_s=\mathrm{id}_{\A}$, $s\in\mathbb R$;
	see \cite[Definition~1.10]{KustermansVaesWeights}.
	Its $W^*$-lift $\bar\tau$ to
	$\M_\tau=\pi_\tau(\A)''$ satisfies
	\[
	\bar\tau\circ\pi_\tau=\tau
	\quad\text{on }\A_+
	\]
	by \cite[Definition~2.10]{KustermansVaesWeights}.
	
	By \cite[Proposition~2.22 and the paragraph following its proof]
	{KustermansVaesWeights},
	$\bar\tau$ is faithful, normal, and semifinite, with modular
	group extending $\sigma$.
	This extension is trivial by normality and the ultraweak density
	of $\pi_\tau(\A)$ in $\M_\tau$.
	Hence $\bar\tau$ is a trace by
	\cite[Proposition~2.7]{KustermansVaesWeights}.
	In particular,
	\[
	\bar\tau\!\left(\pi_\tau(a)^*\pi_\tau(a)\right)
	=
	\tau(a^*a),
	\qquad a\in\mathfrak n_\tau(\A).
	\]
	
	By \cite[Proposition~2.16 and its proof]{KustermansVaesWeights},
	using the tracial realization of the GNS space of $\bar\tau$,
	the map
	\[
	\Lambda_\tau(a)\longmapsto\pi_\tau(a),
	\qquad a\in\mathfrak n_\tau(\A),
	\]
	extends to a unitary
	\[
	\mathcal H_\tau\longrightarrow L^2(\M_\tau,\bar\tau).
	\]
	The density of $\Lambda_\tau(\mathfrak n_\tau(\A))$
	in $\mathcal H_\tau$ therefore gives
	\begin{equation}\label{eq:GNS-L2-density}
		\overline{
			\pi_\tau(\mathfrak n_\tau(\A))
		}^{\,\|\cdot\|_{2,\bar\tau}}
		=
		L^2(\M_\tau,\bar\tau).
	\end{equation}
		
		The embedding $\pi_\tau$ also preserves the metric. Indeed, for
		$a,b\in\A_{\tau,+}$, continuous functional calculus gives
		\[
		\begin{aligned}
			&\left(
			\pi_\tau(a)^{t/4}
			\pi_\tau(b)^{t/2}
			\pi_\tau(a)^{t/4}
			\right)^{1/t}\\
			&\qquad=
			\pi_\tau\!\left(
			\left(a^{t/4}b^{t/2}a^{t/4}\right)^{1/t}
			\right).
		\end{aligned}
		\]
		Since $\bar\tau\circ\pi_\tau=\tau$ on $\A_+$, it follows that
		\[
		Q_t^{\bar\tau}(\pi_\tau(a),\pi_\tau(b))
		=
		Q_t^\tau(a,b)
		\]
		and hence
		\[
		d_t^{\bar\tau}(\pi_\tau(a),\pi_\tau(b))
		=
		d_t^\tau(a,b).
		\]
		
	Since $\bar\tau\circ\pi_\tau=\tau$, we have
	\[
	\mathfrak n_{\bar\tau}(\pi_\tau(\A))
	=\pi_\tau(\mathfrak n_\tau(\A)).
	\]
	Consequently, \eqref{eq:GNS-L2-density} is exactly condition
	\eqref{eq:L2-core-condition} for $\pi_\tau(\A)\subseteq\M_\tau$.
	Theorem~\ref{thm:L2-core} therefore gives
	\[
	\overline{\pi_\tau(\A_{\tau,+})}^{\,\|\cdot\|_{1,\bar\tau}}
	=L^1(\M_\tau,\bar\tau)_+.
	\]
	Together with the preceding isometry, the completion assertion
	of that theorem proves \eqref{eq:trace-GNS-completion}.
		
		It remains to consider the trace-one part. Let
		\[
		\mathsf S_\tau(\A)
		:=
		\{a\in\A_{\tau,+}:\tau(a)=1\},
		\]
		and let
		\[
		\rho\in L^1(\M_\tau,\bar\tau)_+,
		\qquad
		\bar\tau(\rho)=1.
		\]
		By the first part, there exist $b_n\in\A_{\tau,+}$ such that
		\[
		\pi_\tau(b_n)\longrightarrow\rho
		\qquad\text{in }L^1(\M_\tau,\bar\tau).
		\]
		Set
		\[
		m_n:=\tau(b_n)
		=\bar\tau(\pi_\tau(b_n)).
		\]
		The continuity of the mass functional on $L^1$ gives
		$m_n\to1$. Hence $m_n>0$ for all sufficiently large $n$.
		For such $n$, put
		\[
		c_n:=\frac{b_n}{m_n}\in\mathsf S_\tau(\A).
		\]
		Then
		\begin{align*}
			\|\pi_\tau(c_n)-\rho\|_1
			&\leq
			\left\|
			\frac{\pi_\tau(b_n)}{m_n}-\pi_\tau(b_n)
			\right\|_1
			+
			\|\pi_\tau(b_n)-\rho\|_1\\
			&=
			|1-m_n|
			+
			\|\pi_\tau(b_n)-\rho\|_1
			\longrightarrow0.
		\end{align*}
		By Theorem~\ref{thm:classification}\textup{(i)}, this convergence
		also holds with respect to $d_t^{\bar\tau}$. Thus
		$\mathsf S_\tau(\A)$ is dense in the trace-one shell
		\[
		\{\rho\in L^1(\M_\tau,\bar\tau)_+:\bar\tau(\rho)=1\}.
		\]
		
		Finally, the mass functional is $L^1$-continuous, so this shell
		is $L^1$-closed and hence $d_t^{\bar\tau}$-closed. Since
		$L^1(\M_\tau,\bar\tau)_+$ is complete for $d_t^{\bar\tau}$,
		the trace-one shell is complete. Its metric completion is therefore
		exactly \eqref{eq:trace-GNS-states}.
\end{proof}

\begin{remark}
	\label{rem:faithful-densities}
	A faithful density in $L^1(\M,\tau)_+$ exists if and only if
	$\M$ is $\sigma$-finite. Indeed, such a density determines a
	faithful normal positive functional, whereas every
	$\sigma$-finite von Neumann algebra admits a faithful normal state.
	In this case, faithful densities are dense in
	$L^1(\M,\tau)_+$: if $h$ is one fixed faithful density, then
	\[
	A+\varepsilon h\longrightarrow A
	\qquad\text{in }L^1
	\]
	as $\varepsilon\downarrow0$, and hence also in $d_t$.
	The cone of faithful densities is not complete, since its
	$L^1$-closure contains nonfaithful elements. If $\M$ is not
	$\sigma$-finite, then no faithful normal state, and consequently
	no faithful state density, exists.
\end{remark}
	
\subsection{A Haagerup-$L^p$ formulation beyond the semifinite setting}
\label{subsec:Haagerup-formulation}

Let $\M$ be an arbitrary von Neumann algebra. Under the canonical
order isomorphism
\[
L^1(\M)\cong\M_*,
\]
each $\varphi\in\M_*^+$ is represented by a unique density
$h_\varphi\in L^1(\M)_+$.
Following \cite[Section~2 and Appendix~A]{HiaiJencova}, denote by
\[
\operatorname{tr}:L^1(\M)\longrightarrow\C
\]
the canonical linear functional defined by
\[
\operatorname{tr}(h_\varphi):=\varphi(\one),
\qquad
\varphi\in\M_*^+.
\]
This functional should not be confused with the canonical
semifinite trace on the crossed-product algebra used in the
construction of the Haagerup spaces.

For $t>0$, H\"older's inequality in the Haagerup $L^p$-spaces
shows that
\[
h_\varphi^{t/4}h_\psi^{t/2}h_\varphi^{t/4}
\in L^{1/t}(\M)_+,
\]
and hence its $1/t$-power belongs to $L^1(\M)_+$. We may therefore
define
\begin{equation}\label{eq:Haagerup-affinity}
Q_t^{\mathrm{tr}}(\varphi,\psi)
:=
\operatorname{tr}\!\left(
\left(
h_\varphi^{t/4}
h_\psi^{t/2}
h_\varphi^{t/4}
\right)^{1/t}
\right).
\end{equation}
In the notation of \cite{HiaiJencova},
\[
Q_t^{\mathrm{tr}}(\varphi,\psi)
=
Q_{1/2,\,1/t}(\psi\Vert\varphi).
\]
The Haagerup H\"older inequality also gives
\[
0\leq
Q_t^{\mathrm{tr}}(\varphi,\psi)
\leq
\sqrt{\varphi(\one)\psi(\one)}.
\]
Thus one may set
\begin{equation}\label{eq:Haagerup-distance}
d_t^{\mathrm{tr}}(\varphi,\psi)^2
:=
\frac{\varphi(\one)+\psi(\one)}2
-
Q_t^{\mathrm{tr}}(\varphi,\psi).
\end{equation}
When $\M$ is semifinite and
$\varphi=\varphi_A$, $\psi=\varphi_B$ are represented by
$A,B\in L^1(\M,\tau)_+$, these definitions recover
$Q_t^\tau(A,B)$ and $d_t^\tau(A,B)$.

For $0<t\leq2$, the parameters
\[
\alpha=\frac12,
\qquad
z=\frac1t
\]
satisfy
\[
z\geq\max\{\alpha,1-\alpha\}=\frac12.
\]
Hence the data-processing inequality
\cite[Theorem~3.3]{HiaiJencova} gives
\[
Q_t^{\mathrm{tr}}(\Gamma_*\varphi,\Gamma_*\psi)
\geq
Q_t^{\mathrm{tr}}(\varphi,\psi)
\]
for every normal positive unital map
$\Gamma:\N\to\M$. Since $\Gamma_*$ preserves the masses of
positive functionals, it follows that
\[
d_t^{\mathrm{tr}}(\Gamma_*\varphi,\Gamma_*\psi)
\leq
d_t^{\mathrm{tr}}(\varphi,\psi).
\]
In particular, this contractivity holds for $1\leq t\leq2$,
the range in which Theorem~\ref{thm:classification}\textup{(i)}
establishes metricity in the semifinite setting.

The finite spectral-corner argument used in the proof of
Theorem~\ref{thm:classification}\textup{(i)} relies on a fixed
semifinite trace and does not transfer directly to the homogeneous
subspaces defining the Haagerup $L^p$-spaces. A natural route to an
analogue of the three-point positivity theorem for arbitrary von
Neumann algebras is Haagerup's reduction to finite tracial algebras;
see \cite{HaagerupJungeXu}. We do not pursue that reduction here.

\section{Acknowledgement}

We appreciate Prof. Tamotsu Teruya for a fruitful discussion in Section 7. 
The research was partially supported by the KAKENHI Grant Number JP25K07036.


\end{document}